\newcommand{\IN}{\mathbb N}
\newcommand{\IR}{\mathbb R}
\newcommand{\B}{\mathcal B}
\newcommand{\E}{\mathcal E}
\newcommand{\F}{\mathcal F}
\renewcommand{\phi}{\varphi}
\newcommand{\1}{\mathds{1}}
\newcommand{\Capty}{\mathrm{cap}}
\newcommand{\id}{\mathrm{id}}
\newcommand{\loc}{\mathrm{loc}}
\newcommand{\lra}{\longrightarrow}
\newcommand{\supp}{\operatorname{supp}}
\providecommand{\abs}[1]{\lvert#1\rvert}
\providecommand{\norm}[1]{\lVert#1\rVert}
\theoremstyle{remark}
\newtheorem{example}{Example}[section]
\newtheorem*{remark}{Remark}
\theoremstyle{plain}
\newtheorem{proposition}[example]{Proposition}
\newtheorem{definition}[example]{Definition}
\newtheorem{theorem}[example]{Theorem}
\newtheorem{lemma}[example]{Lemma}
\newtheorem{corollary}[example]{Corollary}
\newcommand{\Hmm}[1]{\leavevmode{\marginpar{\tiny%
$\hbox to 0mm{\hspace*{-0.5mm}$\leftarrow$\hss}%
\vcenter{\vrule depth 0.1mm height 0.1mm width \the\marginparwidth}%
\hbox to 0mm{\hss$\rightarrow$\hspace*{-0.5mm}}$\\\relax\raggedright
#1}}}
\title{Geometric properties of Dirichlet forms under order isomorphisms}
\author[Lenz]{Daniel Lenz}
\address{D. Lenz, Mathematisches Institut\\Friedrich-Schiller-Universität Jena\\07737 Jena, Germany}
\email{daniel.lenz@uni-jena.de}
\author[Schmidt]{Marcel Schmidt}
\address{M. Schmidt, Mathematisches Institut\\Friedrich-Schiller-Universität Jena\\07737 Jena, Germany}
\email{schmidt.marcel@uni-jena.de}
\author[Wirth]{Melchior Wirth}
\address{M. Wirth, Mathematisches Institut\\Friedrich-Schiller-Universität Jena\\07737 Jena, Germany}
\email{melchior.wirth@uni-jena.de}
\date{\today}
\begin{document}

\begin{abstract}
We study pairs of Dirichlet forms  related by an intertwining order
isomorphisms between the associated $L^2$-spaces. We consider the
measurable, the topological and the geometric setting respectively.
In the measurable setting, we  deal with arbitrary (irreducible)
Dirichlet forms and show that any intertwining order isomorphism is
necessarily unitary (up to a constant). In the topological setting
we deal with quasi-regular forms and show that any intertwining
order isomorphism induces a quasi-homeomorphism between the
underlying spaces. In the geometric setting we deal with both
regular Dirichlet forms as well as resistance forms and essentially
show that the geometry defined by these  forms  is preserved by
intertwining  order isomorphisms. In particular, we prove in the
strongly local regular case that intertwining order isomorphisms
induce isometries with respect to the intrinsic metrics between the
underlying spaces under fairly mild assumptions. This applies to a
wide variety of metric measure spaces including
$\mathrm{RCD}(K,N)$-spaces, complete weighted Riemannian manifolds
and  complete quantum graphs. In the non-local regular case our
results cover  in particular graphs as well as fractional Laplacians
as arising in the treatment of $\alpha$-stable Lévy processes. For
resistance forms we show that intertwining order isomorphisms are
isometries with respect to the resistance metrics.

Our results can can be understood as saying that  diffusion always
determines the Hilbert space, and -- under natural compatibility
assumptions  -- the topology and the geometry respectively. As special
instances they cover earlier results for manifolds and graphs.
\end{abstract}

\maketitle

\tableofcontents

\section*{Introduction}
There is a strong interplay between geometry of a manifold, spectral
theory of its Laplace-Beltrami operator and stochastic properties of
the associated  Brownian motion. Clearly, the geometry determines
both the spectral theory and the stochastic properties and a vast
literature is devoted to this topic.

On the fundamental level also the converse is of interest. Indeed, a
famous question of Kac asks whether the spectral theory of the
Laplacian determines the geometry. This question was originally
asked for the two dimensional setting \cite{Kac66} and has triggered
a substantial  research over the time.  Starting with
 Milnor's counterexample in 16 dimensions \cite{Mil} over Sunada's
general method \cite{Sun} it took quite while till a negative answer
was given  in two dimensions  by Gordon, Webb and Wolpert in
\cite{GWW}.  In a similar spirit one may ask whether the diffusion
determines the geometry. This question was brought up by Arendt in
\cite{Are02} and a positive answer was given there for domains in Euclidean space satisfying a mild regularity assumption.  Later a
positive answer was given for manifolds  by Arendt,
Biegert, ter Elst  in \cite{ABE12},
see also \cite{AtE} for a short proof in the compact case and \cite{Are01} for
related material.

Now, Brownian motion and, more generally, symmetric  Markov
processes are not only a basic object of study on  manifolds, but
can rather be considered  on arbitrary measure spaces. A convenient
analytic framework to describe this is given by Dirichlet spaces,
i.e., measure spaces together with a Dirichlet form. More
specifically, each Dirichlet space comes naturally with a Laplace
type operator as well as a symmetric Markov process and, conversely,
any symmetric Markov process induces a Dirichlet form on the
underlying space. Prominent instances  of Dirichlet spaces are
metric measure spaces with the Cheeger energy, fractals as well as
both discrete and metric graphs. Likewise fractional Laplacians on
subsets of Euclidean space and the associated $\alpha$-stable Lévy
processes give rise to Dirichlet spaces. In all these cases the
underlying space is not only a measure space but carries further
geometric structure, and various aspects of the interplay between
this geometric structure, spectral theory of the associated
Laplacians and stochastic properties of the corresponding diffusion
process has received ample attention, see e.g. the recent articles
\cite{AGS14b,AGS15,BBCK,Che99,FG,FLS,Gig15,Kig12,KZ12,KSZ14} or the
survey collection \cite{EKKST,FL} and references therein.

Given this situation it is  very natural to ask the question whether
diffusion determines the geometry for other Dirichlet spaces than
manifolds.

In the case of  discrete graphs a positive partial answer to this
question has recently been given  in \cite{KLSW15} using the special
ingredients available in that situation.

In the present article we consider the question for arbitrary
Dirichlet spaces. As mentioned above, such  Dirichlet spaces do not
necessarily come with a specific geometry. Instead the underlying
space may only be a topological space or even just a measure space.
For this reason we deal with the question successively on the level
of measure theory and Hilbert spaces, on the level of topology and
on the level of geometry. Our main results can be summarized as
giving positive answers to the (correspondingly modified) question
on each of these levels.

We will next be  more specific and discuss the contents of the
article in further detail.

In line with the mentioned works \cite{Are02,AtE,ABE12} our point of
view is that two Markovian semigroups on (possibly) different spaces
are naturally equivalent if they are intertwined by an order
isomorphism between the corresponding $L^2$-spaces. Accordingly, our
main thrust is to find features of the underlying spaces which are
stable under existence of such intertwining order isomorphisms. In
particular, a precise version of the idea that diffusion determines
the geometry is then that equivalence of semigroups over (suitable)
metric spaces entails that there exists an isometric bijection
between the underlying spaces. Similarly, a precise version of the
idea that diffusion determines the topology is then that equivalence
of semigroups on (suitable) topological spaces entails that there
exists a homeomorphism between these spaces.

We begin our investigations in the next section with a discussion of
some background on   order isomorphism between $L^2$-spaces. By a
well-known Lamperti type result such an order isomorphism is
composed of a  measurable map with measurable a.e. inverse between the underlying spaces,
called \textit{transformation}, and a so-called \textit{scaling
function}, which is an almost everywhere strictly positive measurable
function on the range space (Proposition \ref{prop-Lamperti}). This
easily allows to compute their adjoints as well (Lemma
\ref{adjoint_U}). Given these basic results the bulk of the article
is concerned with investigating finer features of the transformation
underlying an order isomorphism between the associated $L^2$-spaces.
As mentioned already, this is done on three levels:

We first consider a measurable setting and deal with arbitrary (irreducible)
Dirichlet forms on measure spaces. We  show that any intertwining
order isomorphism  is unitary up to an overall constant (Theorem
\ref{U_unitary}). Hence, diffusion determines the Hilbert space
structure. This also gives that existence of an intertwining order
isomorphism is stronger than unitary equivalence. On the structural
level this can be seen as explanation  why diffusion -- unlike
spectral theory -- can determine the geometry. We also show that the
scaling function underlying an order isomorphism is necessarily
excessive and that it is therefore constant whenever the Dirichlet
spaces are recurrent. In the transient case however, a non-constant
scaling can not be excluded in the generality of our setting. All
this  is contained in Section \ref{sec-Orderisomorphisms}.

We then turn to a topological setting in Section
\ref{sec-Regularity}. Here, we deal with  Dirichlet forms on
topological spaces.  Of course, in order to obtain meaningful
results we have to assume some type of compatibility between the
Dirichlet form and the underlying topology. This leads us to
consider quasi-regular Dirichlet forms. These provide  the most
general setting ensuring such a compatibility.  Here, we show that
the transformation underlying an  intertwining order isomorphism
provides a  (quasi)-homeomorphism of the underlying space (Theorem
\ref{tau_quasi-homeo}). In this sense diffusion (quasi)-determines
the topology. This  provides an optimal result in the given setting.
Indeed,  in general,  one cannot expect the underlying spaces to be
homeomorphic, as can be seen from the example of a Euclidean ball
and a punctured ball with same radius and dimension, which are
indistinguishable for the Brownian motion.

This result and its proof can be seen as the heart of our article.
Loosely speaking the  proof requires the passage from measure theory
to topology. In order to achieve this, we have to overcome a major
obstacle, which was not present in any of the earlier
investigations: Specifically, we have to deal with the fact that
there is no well-defined pointwise evaluation of functions available
in this generality. In the case of manifolds this did not play a
role as one can always restrict attention to smooth functions, which
allow for pointwise evaluation. In the case of graphs this did not
play a role as the underlying space is discrete anyway. In order to
tackle this obstacle we develop some structure theory centered
around capacity and nests  for quasi-regular forms. This may be
useful in other contexts as well.

We complement the results of Section \ref{tau_quasi-homeo}   by a
study of the Beurling-Deny decomposition for quasi-regular form in
Section \ref{sec-Beurling-Deny}. In particular, we show that there
is no interaction between jump and strongly local part for
intertwined forms (Theorem \ref{transformation_decomposition}). This
insight is completely new as the mentioned earlier results only
dealt with situations in which the decomposition is trivial (i.e.
has only one term).

Let us emphasize that Section \ref{sec-Regularity} and Section
\ref{sec-Beurling-Deny} deal with quasi-regular Dirichlet forms in
full generality thereby covering also infinite dimensional cases.

Finally, we turn to geometry in the three final sections of the
article. Here, again, for meaningful results we have to assume some
type of compatibility between the geometry and the Dirichlet form.
In order to achieve this we rely on intrinsic geometry coming about
with any regular Dirichlet form on a locally compact space together
with some mild additional `smoothness' assumptions. These
assumptions are general enough to  cover all common settings of
geometric analysis. Specifically, we proceed as follows:

For strongly local regular Dirichlet forms intrinsic geometry has
been developed via the concept of the intrinsic metric in a seminal
work by Sturm \cite{Stu94}.  Our main result  (Theorem
\ref{isometry_local}) shows  that the transformation  underlying the
intertwining order isomorphism  provides an isometric homeomorphism
between the closures of the underlying spaces (where all metric
concepts are defined with respect to the intrinsic metrics). Hence,
geometry is determined by diffusion in this case. This result
considerably extends the corresponding results on manifolds  of
\cite{Are02,AtE,ABE12}. In fact, the  assumptions are satisfied for
large classes of metric measure spaces including
$\mathrm{RCD}(K,N)$-spaces, complete weighted Riemannian
manifolds and  complete quantum graphs. Details are discussed in
Section \ref{sec_strongly_local}.

For arbitrary regular Dirichlet forms a framework of  intrinsic
metrics has recently be presented by Frank, Lenz and Wingert in
\cite{FLW14}.
A new phenomenon featured in this theory is that there are in
general several non-compatible intrinsic metrics available whereas
in the strongly local case the intrinsic metric discussed in
\cite{Stu94} can be seen as the maximal intrinsic metric in the
sense of \cite{FLW14}. Our main result in this context takes care of
this multitude of intrinsic metrics by invoking the assumption of
recurrence. It states that after possible removal of closed sets
with zero capacity, the transformation underlying the order
isomorphism induces a bijection between the set of intrinsic metrics
(Theorem \ref{isometry_non-local}). So, here again,  the geometry
(as given by the family of intrinsic metrics) is determined by the
diffusion.  As a consequence we obtain a bijection between the sets
of intrinsic metrics on the whole spaces whenever points have
positive capacity. This result is new even for graphs. Theorem
\ref{isometry_non-local} also applies to  the Dirichlet forms
associated to $\alpha$-stable Levy processes thereby giving the
first non-local  examples outside of the graph setting  of diffusion
determining the geometry. All of this is discussed in Section
\ref{sec_non-local}.

Finally, there are resistance forms. Such forms play a crucial role
in the study of fractals. The class of resistance forms is not
disjoint from the class of strongly local forms or the class of
regular Dirichlet forms. However, in most prominent cases of
resistance forms intrinsic metrics as given above are not useful as
they are trivial \cite{Hin05}. 
To overcome this obstacle we use that any
 resistance
form comes with a metric, the resistance metric, which captures the
geometry. In this context our result says that the transformation
underlying the order isomorphism induces an isometric homeomorphism
with respect to the resistance metrics (Theorem
\ref{isometry_resistance}). It is the first result of this form for
fractals and covers all the usual models.  This is discussed in
Section \ref{sec-Resistance}.

As far as methods are concerned, the considerations in Section
\ref{sec_strongly_local}, Section \ref{sec_non-local} and Section
\ref{sec-Resistance}  can all be seen as building on the method
given in the  proof of Theorem \ref{tau_quasi-homeo} by additionally
using  the tools at hand in the corresponding specific situations.
In terms of results these sections  provide a rather complete
treatment of diffusion determining the geometry.  As a consequence
we not only recover the previously known results but can deal with a
wealth of new situations.

%
%

\bigskip

\textbf{Acknowledgments.} D.L. is grateful for inspiring discussions
with Matthias Keller and Peter Stollmann. Partial support of German
Research Foundation (DFG) and the German Academic Scholarship Foundation (Studienstiftung des deutschen Volkes) is gratefully acknowledged.

\section{Order isomorphisms between
\texorpdfstring{$L^p$}{Lp}-spaces}\label{sec-Basicorderisomorphism}
In this section we collect some basic results about order
isomorphisms between $L^p$-spaces. All the statements given are
essentially well-known and many hold in more general settings, but
we restrict our attention to  the situation we will need later in
the article.

\medskip

A measurable space is called \emph{standard Borel space} if it is
isomorphic to a  Polish (i.e. separable complete metric) space with
its Borel $\sigma$-algebra. We also say that a measure space is
standard Borel if the underlying measurable space is. Throughout the
section we denote by $(X_i,\B_i,m_i)$, $i\in\{1,2\}$,
$\sigma$-finite standard Borel spaces.

All equalities and inequalities in $L^p$ are to be understood as
equalities and inequalities almost everywhere (later we will have to
distinguish between almost everywhere and quasi everywhere).  As
usual we will often write a.e. for almost everywhere in the context
of functions on measure spaces.

A linear map $U\colon L^p(X_1,m_1)\lra L^p(X_2,m_2)$ is called
\emph{positivity preserving} if $f\geq 0$ implies $Uf\geq 0$. An
invertible  positivity preserving linear map with positivity
preserving inverse is called \emph{order isomorphism}. It is a
standard result that positivity preserving operators are continuous
(see e.g. \cite{AB85}, Theorem 4.3).

The structure of order isomorphisms between $L^p$-spaces is
characterized by the following Banach-Lamperti-type theorem (see
e.g. \cite{Wei84}, Proposition 5.1).
\begin{proposition}[Order isomorphism as weighted composition
operator] \label{prop-Lamperti} If $p\in[1,\infty)$ and $U\colon
L^p(X_1,m_1)\lra L^p(X_2,m_2)$ is an order isomorphism, then there
exist a measurable map \sloppy$h\colon X_2\lra (0,\infty)$ and a
measurable map $\tau\colon X_2\lra X_1$ with measurable a.e. inverse such
that
\begin{align*}
Uf=h\cdot (f\circ\tau)
\end{align*}
for all $f\in L^p(X_1,m_1)$. The maps $h$ and $\tau$ are unique up
to equality almost everywhere.
\end{proposition}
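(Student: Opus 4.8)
The plan is to recover the pointwise structure of $U$ from its interaction with the lattice structure of the $L^p$-spaces. First I would note that, being positivity preserving together with its inverse, $U$ is monotone with monotone inverse and hence a lattice isomorphism: it commutes with $\vee$ and $\wedge$, and in particular $U\abs{f}=\abs{Uf}$ for all $f$. The decisive consequence is that $U$ preserves disjointness of supports in both directions, i.e.\ for $f,g\in L^p(X_1,m_1)$ one has $\abs{f}\wedge\abs{g}=0$ if and only if $\abs{Uf}\wedge\abs{Ug}=0$. Thus $U$ carries families of disjointly supported functions to families of disjointly supported functions.

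Next I would encode this disjointness preservation as an isomorphism of measure algebras. Fixing an exhaustion $X_1=\bigcup_n A_n$ by sets of finite measure, for every measurable $A\subseteq X_1$ of finite measure the support $\supp(U\1_A)\subseteq X_2$ is well defined up to a null set, and I would set $\Phi(A):=\supp(U\1_A)$. Monotonicity of $U$ gives $A\subseteq B\Rightarrow\Phi(A)\subseteq\Phi(B)$, disjointness preservation gives $\Phi(A\cup B)=\Phi(A)\cup\Phi(B)$ for disjoint $A,B$ (using $U\1_{A\cup B}=U\1_A\vee U\1_B$), and working inside the finite-measure algebra below each $A_n$ yields relative complements. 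Countable additivity is exactly where the continuity of $U$ is used: if $A=\bigcup_k A_k$ is a disjoint union of finite measure, then $\1_A=\sum_k\1_{A_k}$ converges in $L^p$, so $U\1_A=\sum_k U\1_{A_k}$, and the summands being disjointly supported forces $\Phi(A)=\bigcup_k\Phi(A_k)$ up to a null set. Hence $\Phi$ descends to a $\sigma$-complete Boolean isomorphism between the measure algebras of $(X_1,\B_1,m_1)$ and $(X_2,\B_2,m_2)$, and applying the same construction to $U^{-1}$ produces its inverse.

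The main obstacle, and the step where the standing hypothesis that the spaces are standard Borel is indispensable, is to realize this abstract measure-algebra isomorphism by an honest point map. Here I would invoke the classical point-realization theorem for $\sigma$-isomorphisms of measure algebras of standard Borel spaces (in the spirit of von Neumann and Maharam): there is a measurable map $\tau\colon X_2\lra X_1$, unique up to a.e.\ equality, inducing $\Phi$ via preimages, so that $\1_{\Phi(A)}=\1_A\circ\tau$ a.e.\ for all $A$. Running the same argument for the inverse isomorphism produces a measurable map realizing $\Phi^{-1}$, which must coincide a.e.\ with a measurable inverse of $\tau$; this furnishes the asserted measurable a.e.\ inverse.

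Finally I would recover the weight and conclude. On each $\Phi(A_n)$ the target identity $U\1_{A_n}=h\cdot(\1_{A_n}\circ\tau)$ forces $h:=U\1_{A_n}$ on $\Phi(A_n)$; these prescriptions are consistent on overlaps and patch to a measurable $h\colon X_2\lra[0,\infty)$, which is strictly positive a.e.\ since a zero set of $h$ of positive measure would contradict injectivity of $U$ (equivalently, positivity preservation of $U^{-1}$). The weighted composition formula $Uf=h\cdot(f\circ\tau)$ then holds for simple functions by linearity, and for general $f\in L^p(X_1,m_1)$ by approximation together with the continuity of $U$. Uniqueness of $\tau$ and $h$ up to a.e.\ equality follows from the uniqueness in the point-realization theorem and the fact that $h$ is pinned down on each $\Phi(A_n)$ by $U\1_{A_n}$.
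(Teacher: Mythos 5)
Your proposal is correct in outline, but be aware that the paper does not prove this proposition at all: it is quoted as a known Banach--Lamperti-type theorem with a pointer to \cite{Wei84}, Proposition 5.1, where it follows from a representation of order continuous operators by random measures. Your argument is instead the classical self-contained route: lattice isomorphism, hence two-sided disjointness preservation; the induced $\sigma$-complete Boolean isomorphism $\Phi(A)=\supp(U\1_A)$ of the measure algebras; von Neumann--Maharam point realization (this is indeed exactly where the standing standard Borel assumption earns its keep, as you note); and extraction of the weight from $h=U\1_{A_n}$ on $\Phi(A_n)$, followed by density of simple functions and continuity of $U$. Compared with citing \cite{Wei84}, your approach is more elementary and makes transparent why $\tau$ has a measurable a.e. inverse, namely by realizing the inverse isomorphism built from $U^{-1}$. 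Two small points would deserve a line each in a full write-up. First, to see that the map constructed from $U^{-1}$ actually inverts $\Phi$ you need $\supp(U^{-1}\1_{\supp g})=\supp(U^{-1}g)$ for $g\geq 0$; this holds because containment of supports is a purely disjointness-theoretic relation ($\supp g_1\subset\supp g_2$ precisely when every $h\geq 0$ disjoint from $g_2$ is disjoint from $g_1$), and $U^{-1}$ preserves disjointness in both directions. Second, the strict positivity of $h$ is automatic and needs no appeal to injectivity: on $\Phi(A_n)=\supp(U\1_{A_n})$ the function $U\1_{A_n}$ is nonzero a.e. by the very definition of the support, and $\bigcup_n\Phi(A_n)$ is conull since a positive-measure set in its complement would be carried by the inverse isomorphism to a set meeting some $A_n$ in positive measure, a contradiction. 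With these details filled in, your proof is complete, including the a.e. uniqueness of $h$ and $\tau$, which is pinned down exactly as you say.
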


The  maps $h$ and $U$ associated with an order isomorphism according
to the previous proposition are the main players in the present
article. We call $h$   the \emph{scaling} and $\tau$ the
\emph{transformation} associated with $U$.

Given the scaling and transformation associated with an order
isomorphism, it is easy to calculate its adjoint. Here and in the
following we denote by $\phi_\#\mu$ the pushforward of the measure
$\mu$ along the map $\phi$.

\begin{lemma}[Adjoint of an order
isomorphism]\label{adjoint_U}\label{lemma-adjoint} Let
$p\in[1,\infty)$, $q$ the dual exponent, and let $U\colon
L^p(X_1,m_1)\lra L^p(X_2,m_2)$ be an order isomorphism with
associated scaling $h$ and transformation $\tau$. Then $\tau_\# m_2$
and $m_1$ are mutually absolutely continuous. Moreover, the adjoint
of $U$ is given by
\begin{align*}
U^\ast\colon L^q(X_2,m_2)\lra L^q(X_1,m_1),\,U^\ast g=\frac{d(\tau_\# m_2)}{dm_1}(hg)\circ\tau^{-1}.
\end{align*}
Furthermore, for $p=2$ one has
\begin{align*}
U^\ast U f=\frac{d(\tau_\# m_2)}{dm_1}(h\circ\tau^{-1})^2f \;\:
\mbox{ and }\;\: UU^\ast g=\frac{dm_2}{d(\tau^{-1}_\# m_1)}h^2 g
\end{align*}
for all $f\in L^2(X_1,m_1)$, $g\in L^2(X_2,m_2)$.
\end{lemma}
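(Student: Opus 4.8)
The plan is to take the Lamperti representation $Uf = h \cdot (f \circ \tau)$ from Proposition~\ref{prop-Lamperti} as a starting point and compute everything by unwinding the defining duality $\langle Uf, g\rangle_{L^2(X_2,m_2)} = \langle f, U^\ast g\rangle_{L^2(X_1,m_1)}$, using the change-of-variables formula for pushforward measures. Let me sketch this.

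First I want to establish mutual absolute continuity of $\tau_\# m_2$ and $m_1$. Suppose $m_1(A) = 0$ for a measurable $A \subseteq X_1$; I would argue that $\1_A = 0$ in $L^p(X_1,m_1)$, hence $U\1_A = 0$, i.e. $h \cdot (\1_A \circ \tau) = 0$ a.e.\ on $X_2$. Since $h > 0$ everywhere, this forces $\1_{\tau^{-1}(A)} = 0$ a.e., i.e. $m_2(\tau^{-1}(A)) = (\tau_\# m_2)(A) = 0$. The reverse implication uses the inverse order isomorphism $U^{-1}$ (which by Proposition~\ref{prop-Lamperti} has transformation $\tau^{-1}$ a.e.), run through the same argument. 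This gives $\tau_\# m_2 \ll m_1$ and $m_1 \ll \tau_\# m_2$, so the Radon--Nikodym derivative $\frac{d(\tau_\# m_2)}{dm_1}$ exists and is a.e.\ positive and finite.

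Next, for the adjoint formula I would compute, for $f \in L^p$ and $g \in L^q$,
\begin{align*}
\langle Uf, g\rangle = \int_{X_2} h\,(f\circ\tau)\,g \,dm_2 = \int_{X_2} (f\circ\tau)\,(hg)\,dm_2 = \int_{X_1} f\cdot\big((hg)\circ\tau^{-1}\big)\,d(\tau_\# m_2),
\end{align*}
where the last equality is the abstract change of variables for the pushforward along $\tau$ (pairing the integrand $(f\circ\tau)(hg)$, viewed as a function on $X_2$, against $m_2$ equals pairing its ``descent'' against $\tau_\# m_2$ on $X_1$). Rewriting the integral against $\tau_\# m_2$ as an integral against $m_1$ via the Radon--Nikodym derivative yields
\begin{align*}
\langle Uf, g\rangle = \int_{X_1} f\cdot \frac{d(\tau_\# m_2)}{dm_1}\,\big((hg)\circ\tau^{-1}\big)\,dm_1,
\end{align*}
which identifies $U^\ast g = \frac{d(\tau_\# m_2)}{dm_1}\,\big((hg)\circ\tau^{-1}\big)$ as claimed. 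One should check this lands in $L^q(X_1,m_1)$ and that the pairing is justified by Hölder, but these are routine given the mutual absolute continuity already secured.

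Finally, the two composite formulas follow by direct substitution. For $U^\ast U f$, I apply $U^\ast$ to $Uf = h\,(f\circ\tau)$, so the factor $hg$ with $g = Uf$ becomes $h \cdot h \cdot (f\circ\tau) = h^2 (f\circ\tau)$; composing with $\tau^{-1}$ turns $h^2(f\circ\tau)$ into $(h\circ\tau^{-1})^2 f$, and multiplying by $\frac{d(\tau_\# m_2)}{dm_1}$ gives the stated expression. For $UU^\ast g$, one applies $U$ to $U^\ast g$ and uses the companion Radon--Nikodym identity $\frac{dm_2}{d(\tau_\#^{-1} m_1)}$ arising symmetrically from $U^{-1}$. \emph{The main obstacle} I anticipate is purely bookkeeping rather than conceptual: keeping the direction of composition with $\tau$ versus $\tau^{-1}$ straight and correctly tracking which Radon--Nikodym derivative appears on which side, since $\tau$ need not be a bijection in a literal pointwise sense but only possesses a measurable a.e.\ inverse, so all equalities must be read modulo the null sets controlled by the mutual absolute continuity established in the first step.
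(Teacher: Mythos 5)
Your proposal is correct and follows essentially the same route as the paper: the same null-set argument via $U\1_A$ and positivity of $h$ for the mutual absolute continuity, the same duality computation with the pushforward change of variables and the Radon--Nikodym derivative to identify $U^\ast$, and direct substitution for the composite formulas. The only cosmetic point is that membership of $U^\ast g$ in $L^q$ is best justified by boundedness of $U$ (the functional $f\mapsto\langle Uf,g\rangle$ is continuous, as the paper notes) rather than by the absolute continuity alone.
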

\begin{proof}
Let $A\in X_1$ be measurable with $m_1(A)=0$. Then $\1_A=0$ in
$L^p(X_1,m_1)$, hence $U\1_A=0$ in $L^p(X_2,m_2)$. Thus, as $h >0$
a.e. we find
\begin{align*}
\tau_\# m_2(A)=\int_{X_2} \1_A\circ\tau\,dm_2=\int_{X_2}\frac 1
h\cdot U\1_A\,dm_2=0.
\end{align*}
The converse direction works analogously by invoking $U$ instead of
$U^{-1}$.

Now we can compute the adjoint. For all $f\in L^p(X_1,m_1)$, $g\in L^q(X_2,m_2)$ we have
\begin{align*}
\langle Uf,g\rangle&=\int_{X_2}gh \cdot (f\circ\tau)\,dm_2\\
&=\int_{X_1}(gh)\circ\tau^{-1}f\,d(\tau_\# m_2)\\
&=\int_{X_1}f\frac{d(\tau_\# m_2)}{dm_1}(gh)\circ\tau^{-1}\,dm_1.
\end{align*}
Thus, $\frac{d(\tau_\# m_2)}{dm_1}(hg)\circ\tau^{-1}$ induces a continuous linear functional on $L^p(X_1,m_1)$ and must therefore be an element of $L^q$. Furthermore, the above computation shows the formula for $U^\ast$. The formulae for $UU^\ast$ and $U^\ast U$ follow easily.
\end{proof}

\section{Order isomorphisms intertwining Markovian
semigroups}\label{sec-Orderisomorphisms}
 In the previous section we have
discussed the basic structure  of order isomorphism.  In this
section we investigate the structure of order isomorphisms that
intertwine Markovian semigroups. In particular, we show that
intertwining on $L^p$ implies intertwining on $L^2$ and that
intertwining operators on $L^2$ between irreducible Markovian
semigroups are necessarily unitary up to a constant (Theorem
\ref{U_unitary}) and provide a strong rigidity result for
intertwined semigroups  in the recurrent case (Corollary
\ref{recurrent_h_const}).

Along the way we will need (and recall) various pieces of the theory
of Markovian semigroups and Dirichlet forms. For background  and
references we refer to the standard textbooks such as \cite{FOT94},
more results on (not necessarily bounded) intertwiners of Markovian
semigroups and probabilistic interpretations can be found in
\cite{PSZ17}.

\medskip

Throughout this section, $(X_1,\B_1,m_1),\,(X_2,\B_2,m_2)$ and
$(X,\B,m)$ denote $\sigma$-finite standard Borel spaces.

If $E_1,E_2$ are Banach spaces and $S_1,S_2$ are (not necessarily
bounded) operators on $E_1$ and $E_2$ respectively, an invertible
operator $U\colon E_1\lra E_2$ is said to \emph{intertwine} $S_1$
and $S_2$ if
$$UD(S_1)=D(S_2)  \;  \mbox{ and } \;  US_1 f=S_2Uf$$
for all $f\in D(S_1)$. Two families $(S^{(1)}_{\alpha})_{\alpha\in A}$
and $(S^{(2)}_{\alpha})_{\alpha\in A}$ are said to be intertwined by
$U$ if $U$ intertwines $S^{(1)}_\alpha$ and $S^{(2)}_\alpha$ for all
$\alpha\in A$.

For later use we note that two strongly continuous symmetric
contraction semigroups are intertwined by $U$ if and only if their
generators are (cf. \cite{KLSW15}, Appendix A).

Denote by $L_+(X,m)$ the space of all equivalence classes of
measurable functions $X\lra [0,\infty]$. A positivity preserving
operator $S\colon L^p(X_1,m_1)\lra L^p(X_2,m_2)$ can be uniquely
extended to a map $\tilde S\colon L_+(X_1,m_1)\lra L_+(X_2,m_2)$
that satisfies $\tilde Sf_n\nearrow \tilde Sf$ if $f_n\nearrow f$.
For some properties of this extension see \cite{Kaj17}, Proposition~1. Unless it is ambiguous, we will often omit the tilde and simply
use the same symbol for the given operator and  this extension.

If $(T_i)$ are positivity preserving operators on $L^p(X_i,m_i)$,
$i\in\{1,2\}$, which are intertwined by an order isomorphism
$U\colon L^p(X_1,m_1)\lra L^p(X_2,m_2)$, it is easily verified that
$\tilde U \tilde T_1=\tilde T_2\tilde U$.

From now on we will deal with \emph{Markovian semigroups}, that is,
strongly continuous symmetric contraction semigroups $(T_t)$ of
positivity preserving operators on $L^2$ such that $\tilde T_t 1\leq
1$ for all $t\geq 0$.

For a Markovian semigroup $(T_t)$, the restriction $(T_t|_{L^p\cap
L^2})$ extends continuously to $L^p$ for all $p\in [1,\infty)$. We
will denote this extension also by $(T_t)$. It is compatible with
the extension $\tilde{T_t}$, i.e., $T_t f = \tilde{T_t}f$ for all $f
\in L_+^p$, $t > 0$. Moreover, the semigroups on $L^p$ and $L^q$ are
mutually adjoint for dual exponents $p,q$.

\begin{lemma}
Let $(T_t^{(i)})$, $i \in \{1,2\},$ Markovian semigroups on
$L^2(X_i,m_i)$, let $p\in[1,\infty)$ and let $q$ be the dual
exponent. If $U\colon L^p(X_1,m_1)\lra L^p(X_2,m_2)$  is an order
isomorphism intertwining $(T_t^{(1)})$ and $(T_t^{(2)})$, its
adjoint $U^\ast\colon L^q(X_2,m_2)\lra L^q(X_1,m_1)$ is an order
isomorphism intertwining $(T_t^{(2)})$ and $(T_t^{(1)})$.
\end{lemma}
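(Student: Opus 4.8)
The plan is to deduce everything from the bounded-operator adjoint calculus together with two facts recalled in the text: positivity preserving operators between $L^p$-spaces are automatically continuous, and for a Markovian semigroup the realizations on $L^p$ and on $L^q$ are mutually adjoint. Accordingly I would split the proof into two independent claims: first that $U^\ast$ is an order isomorphism, and second that it intertwines the two semigroups in the reverse order.

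For the first claim, I would note that $U$, being positivity preserving and invertible with positivity preserving inverse $U^{-1}$, is bounded with bounded inverse; hence $U^\ast\colon L^q(X_2,m_2)\lra L^q(X_1,m_1)$ is bounded and invertible with $(U^\ast)^{-1}=(U^{-1})^\ast$. Positivity of $U^\ast$ is then immediate by testing against positive functions: for $g\geq 0$ in $L^q$ and every $f\geq 0$ in $L^p$ one has $\langle U^\ast g,f\rangle=\langle g,Uf\rangle\geq 0$ since $Uf\geq 0$, and as $f\geq 0$ is arbitrary this forces $U^\ast g\geq 0$. The identical argument applied to $U^{-1}$ shows that $(U^\ast)^{-1}=(U^{-1})^\ast$ is positivity preserving as well, so $U^\ast$ is an order isomorphism. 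Alternatively one could read this conclusion directly off the weighted-composition formula for $U^\ast$ in Lemma \ref{adjoint_U}, whose weight $\frac{d(\tau_\# m_2)}{dm_1}$ and factor $h$ are strictly positive a.e.

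For the second claim I would take adjoints in the intertwining identity. Writing $T^{(i)}_{t,p}$ and $T^{(i)}_{t,q}$ for the realizations of $(T_t^{(i)})$ on $L^p$ and $L^q$, intertwining of the bounded operators $T^{(i)}_{t,p}$ by $U$ amounts simply to $U T^{(1)}_{t,p}=T^{(2)}_{t,p}U$ for all $t\geq 0$ (the domain condition in the definition of intertwining being automatic for the everywhere-defined operators $T^{(i)}_{t,p}$, since $U$ is bijective). Taking adjoints and using that adjunction reverses composition yields $(T^{(1)}_{t,p})^\ast U^\ast=U^\ast(T^{(2)}_{t,p})^\ast$, and since the $L^p$- and $L^q$-realizations of each $(T_t^{(i)})$ are mutually adjoint this reads $T^{(1)}_{t,q}U^\ast=U^\ast T^{(2)}_{t,q}$ for every $t\geq 0$. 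This is exactly the statement that $U^\ast$ intertwines $(T^{(2)}_t)$ and $(T^{(1)}_t)$, with the roles of $X_1$ and $X_2$ interchanged as required.

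The argument is essentially formal, so I expect no genuine difficulty; the only point demanding care is the bookkeeping of the order reversal under adjunction together with the mutual adjointness of the two realizations of each semigroup, which is precisely what makes the roles of the two spaces swap in the conclusion. No reflexivity or compactness is needed, and the borderline case $q=\infty$ (arising when $p=1$) causes no trouble, since $p\in[1,\infty)$ keeps $U$ itself on the predual side.
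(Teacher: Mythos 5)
Your proposal is correct and follows essentially the same route as the paper's proof: positivity of $U^\ast$ (and of $(U^\ast)^{-1}=(U^{-1})^\ast$) is obtained by testing against positive functions via the duality pairing, and the intertwining is obtained by taking adjoints in $UT^{(1)}_t=T^{(2)}_tU$ together with the mutual adjointness of the $L^p$- and $L^q$-realizations of the semigroups. Your write-up merely spells out details (boundedness, the reversal of composition under adjunction, the $p=1$ case) that the paper leaves implicit.
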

\begin{proof}
Denote by $\langle\cdot,\cdot\rangle$ the dual pairing of $L^p$ and
$L^q$. For $f\in L^q$ we have $f\geq 0$ if and only if  $\langle
f,g\rangle\geq 0$ for all $g\in L^p_+$.

For $f\in L_+^q(X_2,m_2)$ we have
\begin{align*}
\langle U^\ast f,g\rangle=\langle f,Ug\rangle\geq 0
\end{align*}
for all $g\in L_+^p(X_1,m_1)$. Thus, $U^\ast f\geq 0$. The same argument holds for the inverse $(U^\ast)^{-1}=(U^{-1})^\ast$ so that $U^\ast$ is an order isomorphism. The intertwining property follows by taking adjoints and using the fact that the semigroups on $L^p$ and $L^q$ are adjoint.
\end{proof}

A measurable subset $A$ of $X$ is called \emph{invariant} under the
Markovian semigroup $(T_t)$  if $\widetilde{T_t} \1_A\leq \1_A$ for
all $t\geq 0$ (for various characterizations of invariance see
\cite{Sch04}). If every invariant set is either a null set or the
complement of a null set, $(T_t)$ is called \emph{irreducible} (or
{\em ergodic}).

If $U$ is an order isomorphism intertwining Markovian semigroups
$(T_t^{(1)})$ and $(T_t^{(2)})$ and $\tau$ is the associated
transformation, it is easy to see that a measurable set $A\subset X_2$ is
invariant under $(T_t^{(2)})$ if and only if $\tau(A)$ is invariant
under $(T_t^{(1)})$. Consequently, $(T_t^{(1)})$ is irreducible if
and only if $(T_t^{(2)})$ is irreducible.

By a slight abuse of notation, we say that a measurable function $f$
is almost everywhere constant if there exists $\alpha\in\IR$ such that
$f=\alpha$ a.e.

\begin{lemma}\label{commutant_semigroup}
Let $(T_t)$ be an irreducible Markovian semigroup and let
$\phi\colon X\lra[0,\infty)$ be measurable. If $\tilde{T_t}(\phi
f)=\phi \tilde{T_t} f$ for all $f\in L_+(X,m)$ and $t\geq 0$, then
$\phi$ is almost everywhere constant.
\end{lemma}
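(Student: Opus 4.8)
The plan is to show that every superlevel set of $\phi$ is invariant under $(T_t)$ and then to invoke irreducibility. Concretely, for $c\ge 0$ set $A_c=\{\phi\ge c\}$. Since $\phi$ takes values in $[0,\infty)$, it suffices to prove that each $A_c$ is invariant, i.e.\ that $\tilde T_t\1_{A_c}\le \1_{A_c}$ for all $t\ge 0$; irreducibility then forces each $A_c$ to be a null set or the complement of a null set, and a threshold argument will pin $\phi$ down to a single constant.

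First I would establish the invariance. The pointwise inequality $\phi\,\1_{A_c}\ge c\,\1_{A_c}$ holds by the very definition of $A_c$ (both sides vanish off $A_c$, while $\phi\ge c$ on $A_c$). Applying the monotone, positively homogeneous extension $\tilde T_t$ --- here I use that the extension of a positivity preserving operator to $L_+$ is additive, positively homogeneous and hence order preserving --- gives $\tilde T_t(\phi\,\1_{A_c})\ge c\,\tilde T_t\1_{A_c}$. The commutation hypothesis applied to $f=\1_{A_c}\in L_+(X,m)$ turns the left-hand side into $\phi\,\tilde T_t\1_{A_c}$, so that $(\phi-c)\,\tilde T_t\1_{A_c}\ge 0$ almost everywhere. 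Since $\phi-c<0$ on the complement $X\setminus A_c=\{\phi<c\}$ while $\tilde T_t\1_{A_c}\ge 0$ everywhere, the product can be nonnegative there only if $\tilde T_t\1_{A_c}$ vanishes a.e.\ on $\{\phi<c\}$. As $(T_t)$ is Markovian we also have $\tilde T_t\1_{A_c}\le \tilde T_t 1\le 1$, and combining the two we obtain $\tilde T_t\1_{A_c}\le \1_{A_c}$; that is, $A_c$ is invariant.

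With invariance in hand, irreducibility yields that for each $c$ the set $A_c$ is either null or co-null. I would then set $\alpha=\sup\{c\ge 0: A_c \text{ is co-null}\}$; the set over which the supremum is taken is nonempty since $A_0=X$, and it is bounded because $\phi$ is finite a.e.\ (the sets $A_c$ decrease to a null set as $c\to\infty$, so they cannot all be co-null unless $m=0$, in which case there is nothing to prove). For $c<\alpha$ the set $\{\phi<c\}$ is null and for $c>\alpha$ the set $A_c$ is null; writing $\{\phi<\alpha\}$ and $\{\phi>\alpha\}$ as countable unions of such sets shows both are null, whence $\phi=\alpha$ almost everywhere.

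The routine calculations are genuinely routine; the one point that needs care is the handling of the extension $\tilde T_t$ on $L_+$, namely that it is monotone and positively homogeneous, and that products such as $(\phi-c)\,\tilde T_t\1_{A_c}$ are interpreted a.e.\ in $[0,\infty]$ --- and here finiteness of $\tilde T_t\1_{A_c}\le 1$ is what makes the sign argument legitimate even where $\tilde T_t\1_{A_c}=0$. I expect this bookkeeping, together with the verification that $\alpha$ is finite, to be the only delicate part; the structural heart of the argument is the single inequality showing that superlevel sets are invariant.
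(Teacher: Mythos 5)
Your proposal is correct and takes essentially the same route as the paper: where the paper proves invariance of the sublevel sets $\{\phi\leq\lambda\}$ from the commutation relation, monotonicity of $\tilde T_t$ and the Markov bound $\tilde T_t\1\leq 1$, you prove the mirror-image statement for the superlevel sets $\{\phi\geq c\}$, and both arguments conclude with the identical irreducibility-plus-threshold step pinning $\phi$ to a single constant. Your extra bookkeeping (finiteness of $\tilde T_t\1_{A_c}$, finiteness of the threshold $\alpha$) is sound and matches what the paper leaves implicit.
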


\begin{proof}
Let $\lambda\geq 0$ and define $A(\lambda)=\{x\in X\mid \phi(x)\leq
\lambda\}$.   We will show that $A(\lambda)$ is invariant under $T$.

By the commutation relation for $\phi$ and $\tilde{T_t}$ we have
\begin{align*}
\phi \tilde{T_t} \1_{A(\lambda)}=\tilde{T_t} (\phi
\1_{A(\lambda)})\leq \lambda \tilde{T_t}  \1_{A(\lambda)},
\end{align*}
hence $(\phi-\lambda)\tilde{T_t}  \1_{A(\lambda)}\leq 0$ for all
$t\geq 0$. For $y\in A(\lambda)^c$ this implies
\begin{align*}
0\geq \underbrace{(\phi(y)-\lambda)}_{>0}(\tilde{T_t}
\1_{A(\lambda)})(y),
\end{align*}
so $\tilde{T_t}  \1_{A(\lambda)}(y)\leq 0$ for all $t\geq 0$. On the
other hand, $0\leq \1_{A(\lambda)}\leq 1$ implies $\tilde{T_t}
\1_{A(\lambda)}\leq 1$. Put together, we can conclude $\tilde{T_t}
\1_{A(\lambda)}\leq \1_{A(\lambda)}$ for all $t\geq 0$, that is,
$A(\lambda)$ is invariant.

Now obviously,
 $$A(\lambda)\subset A(\gamma) \; \mbox{ for }\; \lambda\leq
\gamma, \; X = \bigcup_{\lambda\geq 0}A(\lambda)\;  \mbox{ and }\;
\bigcap_{\lambda\geq 0}A(\lambda)^c = \emptyset.$$

Thus, since $(T_t)$ is irreducible, there is a (unique) $\beta\geq
0$ such that $m(A(\lambda))=0$ for $\lambda<\beta$ and
$m(A(\lambda)^c)=0$ for $\lambda>\beta$. It follows easily that
$\phi=\beta$ almost everywhere.
\end{proof}
\begin{remark} The  commutation property given in the lemma gives in
fact a characterization of irreducibility, as can bee seen from the characterization of invariant sets in \cite{Sch04}, Theorems 6 and 7.
\end{remark}

\begin{theorem}[Main properties of intertwining order isomorphisms]\label{U_unitary}
Let $p\in[1,\infty)$ and let $U\colon L^p(X_1,m_1)\lra L^p(X_2,m_2)$
be an order isomorphism intertwining irreducible Markovian
semigroups $(T_t^{(1)})$ and $(T_t^{(2)})$. Then there is a constant
$\beta>0$ such that $\widetilde U\widetilde{U^\ast}=\beta$,
$\widetilde{U^\ast}\widetilde U=\beta$ and
\begin{align*}
\tau_\#(h^2m_2)=\beta m_1
\end{align*}
for the transformation $\tau$ associated with $U$.  Moreover, if
$p\neq 2$, $h$ and $h^{-1}$ are bounded, and for every
$r\in[1,\infty]$ the restriction $U|_{L^p\cap L^r}$ extends to an
order isomorphism $U^{(r)}\colon L^r(X_1,m_1)\lra L^r(X_2,m_2)$
intertwining $(T_t^{(1)})$ and $(T_t^{(2)})$ with the same scaling
and transformation as $U$.
\end{theorem}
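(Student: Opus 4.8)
The plan is to prove the two assertions in turn: the first by recognizing $\widetilde U\widetilde{U^\ast}$ and $\widetilde{U^\ast}\widetilde U$ as multiplication operators in the commutant of the semigroup and invoking Lemma~\ref{commutant_semigroup}, the second by analyzing the weight that governs the $L^p$-norm of $U$. Throughout I use that the extensions to $L_+$ are given by the explicit formulae $\widetilde U f=h\,(f\circ\tau)$ (Proposition~\ref{prop-Lamperti}) and $\widetilde{U^\ast}g=\frac{d(\tau_\# m_2)}{dm_1}\,(hg)\circ\tau^{-1}$ (Lemma~\ref{adjoint_U}), independently of $p$. Composing these for $g\in L_+(X_2,m_2)$ gives a multiplication operator
\[
\widetilde U\widetilde{U^\ast}g=\phi\,g,\qquad \phi:=h^2\cdot\Big(\tfrac{d(\tau_\# m_2)}{dm_1}\circ\tau\Big),
\]
and mutual absolute continuity of $\tau_\# m_2$ and $m_1$ (Lemma~\ref{adjoint_U}) makes $\phi$ finite and strictly positive $m_2$-a.e. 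Since $U$ intertwines the semigroups and, by the lemma preceding the theorem, so does $U^\ast$, the extensions satisfy $\widetilde U\widetilde{T_t^{(1)}}=\widetilde{T_t^{(2)}}\widetilde U$ and $\widetilde{U^\ast}\widetilde{T_t^{(2)}}=\widetilde{T_t^{(1)}}\widetilde{U^\ast}$ on $L_+$; composing shows that multiplication by $\phi$ commutes with the irreducible $(T_t^{(2)})$. Lemma~\ref{commutant_semigroup} then forces $\phi=\beta$ for a constant $\beta\ge 0$, and positivity of $\phi$ gives $\beta>0$, so $\widetilde U\widetilde{U^\ast}=\beta$. The analogous composition yields $\widetilde{U^\ast}\widetilde U=\phi\circ\tau^{-1}=\beta$, and a change of variables identifies $\frac{d(\tau_\#(h^2m_2))}{dm_1}=\phi\circ\tau^{-1}=\beta$, i.e.\ $\tau_\#(h^2m_2)=\beta m_1$.

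For the second assertion I would use that both $U$ and $U^{-1}$ are bounded on $L^p$. Writing out the norm via the same change of variables gives $\|Uf\|_p^p=\int_{X_1}w\,|f|^p\,dm_1$ with $w=(h^p\circ\tau^{-1})\frac{d(\tau_\# m_2)}{dm_1}$, and boundedness of $U$ and $U^{-1}$ forces $0<c\le w\le C<\infty$ a.e. Substituting the relation $\frac{d(\tau_\# m_2)}{dm_1}=\beta\,(h\circ\tau^{-1})^{-2}$ obtained above, this collapses to the clean identity
\[
w=\beta\,(h\circ\tau^{-1})^{p-2}.
\]
For $p\neq 2$ the exponent is nonzero, so the two-sided bound on $w$ bounds $h\circ\tau^{-1}$ above and below by positive constants; pulling this back along $\tau$ (using $\tau_\# m_2\sim m_1$, so $m_1$-a.e.\ bounds become $m_2$-a.e.\ bounds) shows $h$ and $h^{-1}$ are bounded.

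With $h,h^{-1}$ bounded I would define $U^{(r)}f=h\,(f\circ\tau)$ for $r\in[1,\infty]$. The same computation gives $\|U^{(r)}f\|_r^r=\int_{X_1}\beta\,(h\circ\tau^{-1})^{r-2}|f|^r\,dm_1$ (with the obvious modification $\|U^{(r)}f\|_\infty\asymp\|f\|_\infty$ for $r=\infty$), and since $h\circ\tau^{-1}$ is bounded above and below, $U^{(r)}$ is bounded with bounded inverse, hence an order isomorphism agreeing with $U$ on $L^p\cap L^r$ (dense in $L^r$ for $r<\infty$, while $r=\infty$ needs only boundedness). The intertwining property then transfers from the $L_+$-relation $\widetilde U\,\widetilde{T_t^{(1)}}=\widetilde{T_t^{(2)}}\,\widetilde U$ together with the compatibility of the $L^r$-semigroups with their $L_+$-extensions: for $f\in L_+^r(X_1)$ one computes $U^{(r)}T_t^{(1)}f=\widetilde U\widetilde{T_t^{(1)}}f=\widetilde{T_t^{(2)}}\widetilde U f=T_t^{(2)}U^{(r)}f$, and linearity extends this to all of $L^r$.

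The main obstacle I anticipate is measure-theoretic bookkeeping rather than a conceptual difficulty: transporting almost-everywhere statements across $\tau$ and $\tau^{-1}$ via the mutual absolute continuity of $\tau_\# m_2$ and $m_1$, and verifying that the commutation relation genuinely passes to the monotone extensions on $L_+$ so that Lemma~\ref{commutant_semigroup} is applicable to $\phi$. The one non-routine computational step is isolating the identity $w=\beta\,(h\circ\tau^{-1})^{p-2}$, which is precisely what makes the hypothesis $p\neq 2$ do its work; when $p=2$ the exponent vanishes and no control on $h$ can be extracted, in line with the $L^2$ theory where $U$ need not even be unitary before rescaling.
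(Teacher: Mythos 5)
Your proposal is correct and follows essentially the same route as the paper's proof: compose $U$ with $U^\ast$ to get a multiplication operator in the commutant of an irreducible Markovian semigroup, apply Lemma~\ref{commutant_semigroup} to force the multiplier to be a constant $\beta$, then read off boundedness of $h$ and $h^{-1}$ from the weighted norm identity $\norm{Uf}_p^p=\beta\int_{X_1}(h\circ\tau^{-1})^{p-2}\abs{f}^p\,dm_1$ and extend to $L^r$ via $\tilde U|_{L^r}$. The only inessential difference is that you apply the commutant lemma to $\widetilde U\widetilde{U^\ast}$ and $(T_t^{(2)})$, whereas the paper works with $\widetilde{U^\ast}\widetilde U$ and $(T_t^{(1)})$.
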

\begin{proof}
From Lemma \ref{adjoint_U} it follows easily that for $f \in
L_+(X_1,m_1)$ we have
$$\widetilde{U^\ast}\tilde U f = \phi f$$
with $\phi=\frac{d(\tau_\# m_2)}{dm_1}(h\circ\tau^{-1})^2$.

Denote by $q$ the dual exponent of $p$. Since $U$ is an order
isomorphism intertwining $(T_t^{(1)})$ and $(T_t^{(2)})$ on $L^p$
and $ U^\ast$ is an order isomorphism intertwining $(T_t^{(2)})$ and
$(T_t^{(1)})$ on $L^q$, we have $\tilde U \tilde T_t^{(1)}=\tilde
T_t^{(2)}\tilde U$ and $\widetilde{U^\ast}\tilde T_t^{(2)}=\tilde
T_t^{(1)}\widetilde{U^\ast}$. This gives $$\widetilde{U^\ast}\tilde
U \tilde T^{(1)}_t=\tilde T_t^{(1)}\widetilde{U^\ast}\tilde U.$$
Putting the last two displayed inequalities together we find
$$\phi T^{(1)}_t = T^{(2)}_t \phi$$
for all $t\geq 0$. By Lemma \ref{commutant_semigroup} there exists
then  a $\beta>0$ such that $\phi=\beta$ almost everywhere, that is,
$\beta m_1=\tau_\#(h^2m_2)$. This implies
\begin{align*}
\norm{Uf}_{L^p(X_2,m_2)}^p&=\int_{X_2}h^p\abs{f\circ\tau}^p\,dm_2\\
&=\beta\int_{X_2}h^{p-2}\abs{f\circ\tau}^p\,d(\tau^{-1}_\# m_1)\\
&=\beta\int_{X_1}(h\circ\tau^{-1})^{p-2} \abs{f}^p\,dm_1.
\end{align*}
Since $U$ is bounded, the multiplication operator
$M_{(h\circ\tau^{-1})^{p-2}}$ is bounded on $L^p(X_1, m_1)$ and so
$h^{p-2}$ is bounded. The same argument for $U^{-1}$ yields that
$h^{2-p}$ is also bounded. Hence, if $p \neq 2$, both $h$ and
$h^{-1}$ are bounded.

Now it is easily seen that $U|_{L^p\cap L^r}$ is bounded for any $r\in[1,\infty]$ and that the extension to $L^r(X_1,m_1)$ is given by
\begin{align*}
U^{(r)}\colon L^r(X_1,m_1)\lra L^r(X_2,m_2),\,U^{(r)}f=h\cdot (f\circ\tau).
\end{align*}
In particular, $U^{(r)}=\tilde U|_{L^r}$ and so $\tilde U\tilde T_t^{(1)}=\tilde T_t^{(2)}\tilde U$ implies that $U^{(r)}T_t^{(1)}=T_t^{(2)}U^{(r)}$.
\end{proof}

As a direct consequence of Theorem \ref{U_unitary} and the equality
$\|U U^\ast\| = \|U\|^2 = \norm{U^\ast U}$ for $U : L^2
(X_1,m_1)\longrightarrow L^2 (X_2, m_2)$,  we get the following
result in the case $p=2$.

\begin{corollary}[Intertwining order isomorphisms on $L^2$ are (almost) unitary] \label{transformation_measures}
Let $U\colon L^2(X_1,m_1)\lra L^2(X_2,m_2)$ be an order isomorphism intertwining the irreducible Markovian semigroups $(T_t^{(1)})$ and $(T_t^{(2)})$. Then $\frac 1{\norm{U}}U$ is unitary and
\begin{align*}
\tau_\#(h^2m_2)=\norm{U}^2m_1.
\end{align*}
\end{corollary}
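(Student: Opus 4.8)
The plan is to specialize Theorem \ref{U_unitary} to the case $p=2$ and then pin down the value of the constant $\beta$ by means of the Hilbert space norm identity. Since for $p=2$ the dual exponent is again $q=2$, the operators $U$ and $U^\ast$ both act on the same pair of $L^2$-spaces, and the conclusions of Theorem \ref{U_unitary} read $\widetilde{U}\,\widetilde{U^\ast}=\beta$, $\widetilde{U^\ast}\,\widetilde{U}=\beta$ and $\tau_\#(h^2 m_2)=\beta\, m_1$ for some $\beta>0$.

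First I would translate the statements about the extensions $\widetilde U,\widetilde{U^\ast}$ back into genuine operator identities on $L^2$. As positivity preserving operators are continuous, $U$ is bounded, and the extension $\widetilde{U^\ast}\,\widetilde{U}$ agrees with $U^\ast U$ on the positive cone $L^2_+(X_1,m_1)$ (recall $U$ maps nonnegative functions to nonnegative functions, so the outer map is applied to an element of the cone). Since the positive cone spans $L^2(X_1,m_1)$ and both maps are bounded, this forces $U^\ast U=\beta\,\id$ on $L^2(X_1,m_1)$, and likewise $UU^\ast=\beta\,\id$ on $L^2(X_2,m_2)$.

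Next I would identify $\beta$ with $\norm{U}^2$. Because $U$ is a bounded operator between Hilbert spaces, the $C^\ast$-identity gives $\norm{U^\ast U}=\norm{U}^2$. On the other hand $U^\ast U=\beta\,\id$ has operator norm $\beta$, so $\beta=\norm{U}^2$. Substituting this into the pushforward relation yields $\tau_\#(h^2 m_2)=\norm{U}^2\, m_1$, which is the asserted measure identity. Finally, setting $V=\norm{U}^{-1}U$ I would read off unitarity directly from $V^\ast V=\norm{U}^{-2}U^\ast U=\id$ and $VV^\ast=\norm{U}^{-2}UU^\ast=\id$.

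There is no substantial obstacle here beyond the bookkeeping just described; the entire analytic content is carried by Theorem \ref{U_unitary} via Lemma \ref{commutant_semigroup}. The one point meriting a moment's care is the passage from the multiplication-by-$\beta$ statement about the $L_+$-extensions to the honest operator equality $U^\ast U=\beta\,\id$, which is precisely what allows the Hilbert space norm identity to pin down $\beta$.
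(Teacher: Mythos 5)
Your proposal is correct and takes essentially the same route as the paper, which also obtains the corollary by specializing Theorem \ref{U_unitary} to $p=2$ and invoking the $C^\ast$-identity $\norm{UU^\ast}=\norm{U}^2=\norm{U^\ast U}$ to pin down $\beta=\norm{U}^2$. The one step you flag for care --- passing from the statement about the $L_+$-extensions to the genuine operator identity $U^\ast U=\beta\,\id$ --- is handled soundly by your cone-spanning argument, and is in fact already available from Lemma \ref{adjoint_U}, which for $p=2$ gives $U^\ast U$ as a multiplication operator on all of $L^2(X_1,m_1)$.
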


\begin{remark}
The previous theorem  justifies why we restrict our attention to
intertwining on $L^2$.  It  shows that
intertwining by order isomorphisms of irreducible semigroups on any
$L^p$-space yields intertwining by order isomorphisms  on $L^2$.
Moreover, the corollary then shows that intertwining order
isomorphisms  on $L^2$ are actually (almost)  unitary.
\end{remark}

In the following sections we will primarily deal with the associated
quadratic forms instead of the semigroup itself. A strongly
continuous symmetric contraction semigroup on $L^2$ is Markovian if
and only if the associated closed, densely defined quadratic form
$Q$ is a \emph{Dirichlet form}, that is, $u\wedge 1\in D(Q)$ and
$Q(u\wedge 1)\leq Q(u)$ for all $u\in D(Q)$ (where $f\wedge g$
denotes the minimum of $f$ and $g$).

In general, the Dirichlet form does not transform nicely under order
isomorphisms  intertwining  the associated Markovian semigroup since
it is not clear how $U$ behaves with respect to the Hilbert space
structure on $L^2$. However, in the irreducible case the situation
is much better as the intertwining operator is (almost) unitary.

\begin{corollary}\label{corollary:intertwined forms}
For $i \in \{1,2\}$ let $(T^{(i)}_t)$ be irreducible Markovian
semigroups on $L^2(X_i,m_i)$,   $Q_i$ the corresponding Dirichlet
forms, and $U$ an order isomorphism intertwining $(T^{(1)}_t)$ and
$(T^{(2)}_t)$.
Then $UD(Q_1)=D(Q_2)$ and
$$Q_2(Uf,Ug)=\norm{U}^2 Q_1(f,g)$$
for all $f,g\in D(Q_1)$.
\end{corollary}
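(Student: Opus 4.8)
The plan is to deduce the form identity from the already-established near-unitarity of $U$ together with the intertwining of the semigroups. The key observation is that the Dirichlet form $Q_i$ can be recovered from its semigroup $(T^{(i)}_t)$ via the standard resolvent/generator machinery, so that domain membership and the value of the form are expressible purely in spectral-calculus terms that interact cleanly with an intertwining (almost) unitary.

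First I would invoke Corollary \ref{transformation_measures} to write $V := \frac{1}{\norm U} U$, a unitary operator intertwining $(T^{(1)}_t)$ and $(T^{(2)}_t)$. Since intertwining of the semigroups is equivalent to intertwining of the generators $L_i$ (the remark after the definition of intertwining, cf. \cite{KLSW15}, Appendix A), we get $V D(L_1) = D(L_2)$ and $V L_1 = L_2 V$. For a self-adjoint generator the associated Dirichlet form is $Q_i(f,g) = \langle (-L_i)^{1/2} f, (-L_i)^{1/2} g\rangle$ with domain $D(Q_i) = D((-L_i)^{1/2})$. Because $V$ is unitary and intertwines $-L_1$ with $-L_2$, it intertwines the functional calculi, in particular the square roots: $V\, (-L_1)^{1/2} = (-L_2)^{1/2}\, V$ and $V D((-L_1)^{1/2}) = D((-L_2)^{1/2})$. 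This immediately gives $V D(Q_1) = D(Q_2)$, hence $U D(Q_1) = D(Q_2)$ since $U$ and $V$ differ by the scalar $\norm U$.

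For the value of the form, I would compute directly, for $f,g \in D(Q_1)$,
\begin{align*}
Q_2(Uf, Ug) &= \norm{U}^2\, Q_2(Vf, Vg) \\
&= \norm{U}^2\, \langle (-L_2)^{1/2} Vf,\, (-L_2)^{1/2} Vg\rangle \\
&= \norm{U}^2\, \langle V (-L_1)^{1/2} f,\, V (-L_1)^{1/2} g\rangle \\
&= \norm{U}^2\, \langle (-L_1)^{1/2} f,\, (-L_1)^{1/2} g\rangle \\
&= \norm{U}^2\, Q_1(f,g),
\end{align*}
where the bilinearity of $Q_2$ produces the factor $\norm U^2$ in the first line, the intertwining of square roots is used in the third line, and the unitarity of $V$ (i.e. $V^\ast V = \id$) collapses the inner product in the fourth line.

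The only delicate point — and thus the step I would treat most carefully — is justifying that a unitary intertwining the semigroups also intertwines the square roots of the generators and, crucially, maps form domains onto form domains. This is precisely the statement that a unitary commuting (in the intertwining sense) with a self-adjoint operator commutes with its entire Borel functional calculus; one applies this to $t \mapsto t^{1/2}$ on $[0,\infty)$ for the positive self-adjoint operators $-L_i$. Since $V$ intertwines the bounded operators $T^{(1)}_t = e^{t L_1}$ with $T^{(2)}_t = e^{t L_2}$ for all $t\geq 0$, it intertwines $L_1$ with $L_2$ and hence the spectral projections and all bounded Borel functions thereof; passing to the unbounded function $t^{1/2}$ via truncation and using unitarity to control domains yields the domain equality $V D(Q_1) = D(Q_2)$ together with the square-root intertwining. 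I expect this functional-calculus bookkeeping to be the main obstacle, whereas the algebraic computation of the form value above is routine once it is in place.
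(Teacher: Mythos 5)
Your proof is correct, and it shares the paper's decisive first step: both arguments reduce everything to the unitary $V=\norm{U}^{-1}U$ furnished by Corollary \ref{transformation_measures}, after which one only has to check that a unitary intertwiner preserves the associated form. Where you diverge is in how the form is recovered from the semigroup. The paper avoids generators and spectral theory altogether: it uses the elementary semigroup characterization $Q_i(f,g)=\lim_{t\to 0}\frac 1t\langle f-T^{(i)}_tf,g\rangle$, with $D(Q_i)$ the set of $f$ for which this limit (with $g=f$) exists, and then the single identity $\frac 1t\langle Vf-T^{(2)}_tVf,Vg\rangle=\frac 1t\langle f-T^{(1)}_tf,g\rangle$, valid for all $f,g\in L^2(X_1,m_1)$ and all $t>0$ by intertwining plus unitarity, yields the domain equality and the form identity simultaneously upon letting $t\to 0$. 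Your route through $D(Q_i)=D((-L_i)^{1/2})$ and the Borel functional calculus is equally sound, but the bookkeeping you flag as the main obstacle is lighter than you suggest: since $V$ is unitary and the paper's notion of intertwining already includes $VD(L_1)=D(L_2)$, the generator intertwining is precisely the operator equality $-L_2=V(-L_1)V^{\ast}$ of self-adjoint operators, so uniqueness of the spectral resolution gives $E_2(\cdot)=VE_1(\cdot)V^{\ast}$ and hence $\phi(-L_2)=V\phi(-L_1)V^{\ast}$ with $D(\phi(-L_2))=VD(\phi(-L_1))$ for \emph{every} Borel function $\phi$ at once --- no truncation argument is needed. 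In exchange, your approach proves more than the corollary asks for (unitary equivalence of the entire functional calculi, hence of resolvents and spectral measures), which may be useful elsewhere, whereas the paper's difference-quotient computation is the more economical path to exactly the stated result.
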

\begin{proof} The operator  $V:=\frac{1}{\|U\|} U$ clearly
intertwines the semigroups and is unitary by the preceding
corollary. Given this the statement of the corollary follows by
standard arguments. Here, are the details:  The associated Dirichlet
form is derived from the semigroup via
$$
D(Q_i)=\{f\in L^2(X_i,m_i)\mid \lim_{t\to0}\frac 1 t\langle
f-T_t^{(i)}f,f\rangle\text{ exists}\},$$
$$
Q_i(f,g)=\lim_{t\to 0}\frac 1 t\langle f-T_t^{(i)}f,g\rangle.
$$
Hence, for all $f,g\in L^2(X_1,m_1)$ we have
\begin{align*}
\frac 1t\langle V f-T_t^{(2)} V f, V g\rangle=\frac 1 t\langle V
(f-T_t^{(1)}f),V g\rangle=\cdot\frac 1t\langle
f-T_t^{(1)}f,g\rangle.
\end{align*}
In particular, $f\in D(Q_1)$ if and only if $V f\in D(Q_2)$, and
$Q_2(V f,V g)= Q_1(f,g)$ for $f,g\in D(Q_1)$ and the desired
statements follow.
\end{proof}

It turns out that the scaling $h$ belongs to a special class of
functions  and this can be used to show that it must be constant
under an  additional assumption of recurrence. Details are discussed
in the remaining part of this section.

\smallskip

We first introduce the relevant class of functions.
\begin{definition}
Let $(T_t)$ be a Markovian semigroup on $L^2(X,m)$. A function $u\in
L_+(X,m)$ is called {\em $(T_t)$-excessive} if $ {T_t} u\leq u$ for
all $t\geq 0$.
\end{definition}

\begin{lemma}[$h$ as excessive function]\label{h_excessive}
Let $(T_t^{(i)})$ be Markovian semigroups on $L^2(X_i,m_i)$,
$i\in\{1,2\}$, and $U$ an order isomorphism intertwining
$(T_t^{(1)})$ and $(T_t^{(2)})$. Then the associated scaling $h$ is
$(T_t^{(2)})$-excessive.
\end{lemma}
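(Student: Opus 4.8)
The plan is to recognize the scaling $h$ as the image of the constant function $1$ under the canonical extension $\widetilde U$, and then to read off excessivity directly from the Markovian property of $(T_t^{(1)})$ together with the intertwining relation. No heavy machinery is needed; the entire argument takes place at the level of the monotone extensions to $L_+$.

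First I would identify $h$ explicitly. Since $Uf = h\cdot(f\circ\tau)$ with $\tau\colon X_2\lra X_1$, applying the extension $\widetilde U$ to the constant function $1$ on $X_1$ gives
$$\widetilde U 1 = h\cdot(1\circ\tau) = h,$$
because $1\circ\tau$ is again the constant function $1$ on $X_2$. This identification $h=\widetilde U 1$ is the one observation that makes the whole statement transparent.

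Next I would use that $(T_t^{(1)})$ is Markovian, which by definition means $\widetilde{T_t^{(1)}}\,1\leq 1$ for all $t\geq 0$. Because $\widetilde U$ is the monotone extension of a positivity preserving operator (as recalled in the text preceding the lemma), it preserves this inequality, so
$$\widetilde U\bigl(\widetilde{T_t^{(1)}}\,1\bigr)\leq \widetilde U 1 = h.$$
Finally I would invoke the intertwining relation for the extensions, $\widetilde U\,\widetilde{T_t^{(1)}} = \widetilde{T_t^{(2)}}\,\widetilde U$, which holds whenever an order isomorphism intertwines positivity preserving operators. Rewriting the left-hand side turns it into $\widetilde{T_t^{(2)}}\bigl(\widetilde U 1\bigr) = \widetilde{T_t^{(2)}}\,h$, and combining the two displays yields $\widetilde{T_t^{(2)}}\,h\leq h$ for all $t\geq 0$, which is precisely the assertion that $h$ is $(T_t^{(2)})$-excessive.

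There is essentially no obstacle here. The only point requiring care is that $h$ need not belong to $L^2(X_2,m_2)$, so all three steps must be performed for the extensions $\widetilde U$ and $\widetilde{T_t}$ on $L_+$ rather than on $L^2$ itself; but since these extensions are monotone and satisfy the same intertwining and Markovianity relations as their $L^2$-counterparts, the argument goes through without modification. Notably, irreducibility plays no role in this lemma, so none of the results of Theorem \ref{U_unitary} are invoked.
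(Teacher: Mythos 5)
Your proof is correct and is essentially identical to the paper's: both identify $h=\widetilde U 1$ via the extension to $L_+$ and chain $\widetilde{T_t^{(2)}}h=\widetilde{T_t^{(2)}}\widetilde U1=\widetilde U\widetilde{T_t^{(1)}}1\leq\widetilde U1=h$ using the intertwining of the monotone extensions and Markovianity. Your explicit remarks on why the argument must run on $L_+$ and that irreducibility is not needed are accurate and merely make explicit what the paper leaves implicit.
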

\begin{proof}
Since $\tilde T_t^{(1)}1\leq 1$, we have
\begin{align*}
\tilde T_t^{(2)}h= \tilde T_t^{(2)}\tilde U1=\tilde U \tilde
T_t^{(1)}1\leq \tilde U1=h
\end{align*}
for all $t\geq 0$.
\end{proof}

We now turn to the additional assumption on the semigroup.  Let
$(T_t)$ be a Markovian semigroup on $L^2(X,m)$. For $f\in
L^2_+(X,m)$ define the integral $S_N f=\int_0^N T_tf\,dt$  in the
Bochner sense. Then $(S_N f)$ is a monotone increasing sequence in
$L^2_+(X,m)$ and therefore there exists a $Gf\in L_+(X,m)$ such that
$S_N f\nearrow Gf$. The operator $G$ is positivity preserving and
therefore extends to $G\colon L_+(X,m)\lra L_+(X,m)$. A Markovian
semigroup $(T_t)$ is called \emph{transient} if $Gf<\infty$ almost
everywhere for some $f\in L_+(X,m)$ with $f>0$ a.e. It is called
\emph{recurrent} if $m(\{0<Gf<\infty\})=0$ for all $f\in
L^1_+(X,m)$.
A Dirichlet form is called irreducible (resp. transient, recurrent)
if the associated Markovian semigroup is irreducible (resp.
transient, recurrent). Irreducibility of $Q$ is equivalent to the
following property (see \cite{FOT94}, Theorem 1.6.1): If $A$ is a
measurable subset of $X$ with $\1_A f\in D(Q)$ and
\begin{align*}
Q(f)=Q(\1_A f)+Q(1_{A^c} f)
\end{align*}
for all $f\in D(Q)$, then $m(A)=0$ or $m(A^c)=0$. Recurrence of $Q$ is equivalent to $1\in D(Q)_e$ and $Q(1)=0$ (see \cite{FOT94}, Theorem 1.6.3).


By a standard result, an irreducible Markovian semigroup is either
recurrent or transient (see e.g. \cite{FOT94}, Lemma 1.6.4). In this
case, recurrence and transience can be characterized by a
Liouville-type property, namely the (non-) existence of non-constant
excessive functions. A convenient formulation for this is given in
the following result (see \cite{Kaj17}, Theorem 1).

\begin{lemma}\label{nexc-implies-rec}
Let $(T_t)$ be an irreducible Markovian semigroup. Then $(T_t)$ is
recurrent if and only if every $(T_t)$-excessive function is a.e.
constant.
\end{lemma}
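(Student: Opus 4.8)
The plan is to prove the two implications separately, reducing the ``only if'' direction to the dichotomy recalled above that an irreducible Markovian semigroup is either recurrent or transient. For the direction that recurrence forces every excessive function to be a.e. constant, I would first reduce to \emph{bounded} excessive functions: if $u$ is $(T_t)$-excessive, then so is $u\wedge n$ for every $n\in\IN$, since $T_t(u\wedge n)\le (T_t u)\wedge(n\,\widetilde{T_t}\1)\le u\wedge n$, and $u\wedge n\nearrow u$. The key step is then to upgrade excessivity to invariance, i.e. to show $T_t u=u$. For this I set $w_t:=u-T_t u\ge 0$ and compute, using the semigroup law, that $Gw_t=\int_0^t T_s u\,ds-\lim_{N\to\infty}\int_N^{N+t}T_s u\,ds\le t\,u<\infty$ almost everywhere. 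Recurrence means $G\phi\in\{0,\infty\}$ a.e. for every $\phi\in L^1_+$; applying this to an increasing sequence of bounded, finite-measure-supported minorants $\phi_k\nearrow w_t$ (each satisfying $G\phi_k\le Gw_t<\infty$, hence $G\phi_k=0$, hence $\phi_k=0$ by strong continuity) gives $w_t=0$. Thus every bounded excessive function is invariant.

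It remains to show that a bounded invariant function $u$ is a.e. constant, which is where irreducibility enters and which proceeds exactly as in the proof of Lemma \ref{commutant_semigroup}. For fixed $\lambda\ge 0$ the truncation $u\wedge\lambda$ is again bounded excessive, hence invariant by the previous step, so $(u-\lambda)^+=u-u\wedge\lambda$ is invariant as well. Since $(u-\lambda)^+$ vanishes precisely on $\{u\le\lambda\}$ and $k(u-\lambda)^+\wedge 1\nearrow\1_{\{u>\lambda\}}$, the monotone continuity of the extension $\widetilde{T_t}$ yields $\widetilde{T_t}\1_{\{u>\lambda\}}=0$ on $\{u\le\lambda\}$, i.e. $\{u>\lambda\}$ is invariant. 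Irreducibility then forces $m(\{u>\lambda\})\in\{0,m(X)\}$ for every $\lambda$, and the monotonicity of these level sets in $\lambda$ shows that $u$ is a.e. constant, as at the end of the proof of Lemma \ref{commutant_semigroup}.

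For the converse I argue by contraposition: if $(T_t)$ is not recurrent then, by the dichotomy, it is transient, and I would exhibit a non-constant excessive function. Transience provides $f_0\in L^1_+$ with $f_0>0$ and $Gf_0<\infty$ a.e.; the potential $u:=Gf_0$ is excessive because $T_s Gf_0=\int_s^\infty T_t f_0\,dt\le Gf_0$, and $u>0$ a.e. The crux is to show that $u$ is genuinely non-constant. Assuming the contrary, $Gf_0$ is a positive constant, so after rescaling the constant function $\1$ itself is a potential $\1=Gg$ with $g>0$, $g\in L^1_+$; then $G(g\1_A)\le Gg<\infty$ is excessive and, by the contradiction hypothesis, constant for every measurable $A$. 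Exploiting the symmetry of $G$ one finds $G(g\1_A)=\norm{g}_{L^1}^{-1}\int_A g\,dm$, so that $G$ collapses to the rank-one operator $\phi\mapsto\norm{g}_{L^1}^{-1}\langle\phi,\1\rangle\,\1$ on these functions. This contradicts the injectivity of $G$: choosing $A_1\neq A_2$ with $\int_{A_1}g\,dm=\int_{A_2}g\,dm$ gives $G\big(g(\1_{A_1}-\1_{A_2})\big)=0$ while $g(\1_{A_1}-\1_{A_2})\neq 0$, yet $\langle G\phi,\phi\rangle=\int_0^\infty\norm{T_{t/2}\phi}^2\,dt$ vanishes only for $\phi=0$. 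Hence $Gf_0$ is non-constant, completing the contraposition.

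The step I expect to be the main obstacle is precisely this nonconstancy argument in the transient case: the finiteness of the potential is immediate, but ruling out its collapse to a constant is what genuinely uses transience (through injectivity of $G$) rather than mere sub-Markovianity. In the recurrent direction the analogous pressure point is the passage from the almost-everywhere identities for $\widetilde{T_t}$ to invariance of the level sets, where the absence of any pointwise evaluation forces one to work throughout with the monotone $L_+$-extension $\widetilde{T_t}$ and $\sigma$-finite exhaustions rather than with pointwise values.
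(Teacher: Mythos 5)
Note first that the paper does not actually prove this lemma: it is quoted from [Kaj17], Theorem~1, so your argument is a self-contained substitute rather than a variant of a proof in the text. Your recurrent direction is correct and follows the classical potential-theoretic route: the truncations $u\wedge n$ are excessive; for bounded excessive $u$ the computation $G w_t\le\int_0^t T_s u\,ds\le t\,u<\infty$ with $w_t=u-T_tu$ is legitimate (all terms are finite, and the monotone extension $\widetilde{T_s}$ is additive on $L_+$); recurrence together with $\sigma$-finiteness (take $\phi_k=w_t\1_{X_k}$ with $m(X_k)<\infty$, so $\phi_k\in L^1_+\cap L^2$) forces $G\phi_k=0$ and then $\phi_k=0$ via $\frac1N S_N\phi_k\to\phi_k$ as $N\to0$; and the upgrade from invariance of $(u-\lambda)^+$ to invariance of $\{u>\lambda\}$ via $\widetilde{T_t}v_k\le k\,T_t(u-\lambda)^+=k(u-\lambda)^+$ and monotone continuity is exactly right, after which the argument of Lemma~\ref{commutant_semigroup} finishes. (One definitional caveat you inherit from the statement itself: the limit of the constants $c_n$ of $u\wedge n$ may be $+\infty$, so ``constant'' must be read in $[0,\infty]$.)

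The genuine gap is in the transient direction, at the very step you flagged as the crux. Your witness --- sets $A_1\neq A_2$ with $\int_{A_1}g\,dm=\int_{A_2}g\,dm$ and $m(A_1\triangle A_2)>0$ --- need not exist: if $m$ is purely atomic on $X=\{x_n\mid n\in\IN\}$ and the weights $\int_{\{x_n\}}g\,dm$ equal $3^{-n}$, then distinct subsets have distinct $g$-integrals (uniqueness of base-$3$ expansions with digits $0,1$), so no such pair exists; since weighted graphs are among the central examples of this paper, this is not an exotic configuration. The repair is one line from what you already established: you proved $G(g\1_A)=\norm{g}_{L^1}^{-1}\bigl(\int_A g\,dm\bigr)\1$ for \emph{every} measurable $A$, so for any two disjoint sets $A_1,A_2$ of positive finite $g$-integral the function $\phi=\bigl(\int_{A_1}g\,dm\bigr)^{-1}g\1_{A_1}-\bigl(\int_{A_2}g\,dm\bigr)^{-1}g\1_{A_2}$ satisfies $G\phi=0$ and $\phi\neq0$, and your injectivity argument applies verbatim. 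Three smaller points. First, $\langle G\phi,\phi\rangle=\int_0^\infty\norm{T_{t/2}\phi}_2^2\,dt$ requires $\phi\in L^2$, which is not automatic for $g\in L^1_+$; arrange it at the outset by replacing the transience reference function $f$ by $f_0=f\wedge w$ with $w\in L^1\cap L^\infty$, $w>0$ a.e.\ (possible by $\sigma$-finiteness), so that $g\in L^1\cap L^\infty\subset L^2$ --- this also justifies your unexplained claim that transience provides $f_0\in L^1_+$. Second, ``by the contradiction hypothesis'' must mean the full contrapositive hypothesis that \emph{all} excessive functions are constant; constancy of $Gf_0$ alone does not give constancy of $G(g\1_A)$ (equivalently: if some $G(g\1_A)$ is non-constant you are already done, so the reading is harmless but should be made explicit). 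Third, the repaired argument needs two disjoint sets of positive measure; in the degenerate case where $m$ is concentrated on a single atom every function is trivially constant while a one-point form with killing is transient, so that trivial case must be regarded as implicitly excluded from the statement --- no proof along these lines can cover it.
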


Putting Lemma \ref{h_excessive} and Lemma \ref{nexc-implies-rec}
together we obtain the following rigidity statement for recurrent
semigroups.  For the special case of graphs this result is already
known and in fact one of the main achievements of  \cite{KLSW15}.

\begin{corollary}\label{recurrent_h_const}
Let $(T_t^{(i)})$, $i\in\{1,2\}$, be irreducible, recurrent
Markovian semigroups on $L^2(X_i,m_i)$. If $U\colon L^2(X_1,m_1)\lra L^2(X_2,m_2)$ is an order
isomorphism intertwining $(T_t^{(1)})$ and $(T_t^{(2)})$, then the
associated scaling $h$ is a.e. constant and, in particular, there
exists $\alpha >0$ with
$$\tau_\sharp m_2 = \alpha m_1.$$
\end{corollary}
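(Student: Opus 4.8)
The plan is to combine the excessivity of the scaling $h$ with the Liouville-type characterization of recurrence, and then to feed the resulting information about $h$ into the measure identity from Corollary \ref{transformation_measures}. The whole statement should come out as a formal consequence of results already established.

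First I would observe that the scaling $h$ associated with $U$ lives on the target space $X_2$ and, by Lemma \ref{h_excessive}, is $(T_t^{(2)})$-excessive. Since $(T_t^{(2)})$ is assumed irreducible and recurrent, Lemma \ref{nexc-implies-rec} applies and tells us that every $(T_t^{(2)})$-excessive function is almost everywhere constant. Hence $h$ equals some constant $c$ a.e.; moreover, because the Lamperti representation of Proposition \ref{prop-Lamperti} gives $h\colon X_2\lra(0,\infty)$, we have $c>0$. This already settles the first assertion.

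For the statement about the pushforward measure, I would recall from Corollary \ref{transformation_measures} that an intertwining order isomorphism on $L^2$ between irreducible semigroups satisfies
\begin{align*}
\tau_\#(h^2 m_2)=\norm{U}^2 m_1.
\end{align*}
Substituting $h=c$ a.e. turns the left-hand side into $c^2\,\tau_\# m_2$, so that dividing by $c^2$ yields
\begin{align*}
\tau_\# m_2=\frac{\norm{U}^2}{c^2}\,m_1,
\end{align*}
which is the asserted identity with $\alpha=\norm{U}^2/c^2>0$.

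I do not expect a genuine obstacle here: the corollary is essentially the conjunction of two previously proved lemmas with one substitution. The only points demanding a little care are bookkeeping ones — ensuring that recurrence is invoked for the semigroup $(T_t^{(2)})$ on whose space the excessive function $h$ is defined (the hypothesis conveniently supplies recurrence for both, and irreducibility transfers across $U$ anyway), and noting that the constant value of $h$ is strictly positive, so that the division by $c^2$ in the final step is legitimate.
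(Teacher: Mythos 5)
Your argument is correct and coincides with the paper's own reasoning: the corollary is obtained there precisely by combining Lemma \ref{h_excessive} (excessivity of $h$) with Lemma \ref{nexc-implies-rec} (the Liouville-type characterization of recurrence) and then substituting the constant value of $h$ into the identity $\tau_\#(h^2 m_2)=\norm{U}^2 m_1$ from Corollary \ref{transformation_measures}. Your additional care about the strict positivity of the constant, guaranteed by the Lamperti representation in Proposition \ref{prop-Lamperti}, is exactly the right bookkeeping.
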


\begin{remark} As the preceding considerations show  the scaling function is
constant in the  recurrent situation. Thus, it may be worthwhile to
point out that in our setting in general non-trivial scaling can not
be avoided. To see this we consider an  arbitrary  irreducible
Markovian semigroup $T_t$ on $L^2 (X,m)$ admitting an non-trivial
excessive function $h$. Then it is not hard to see that  $h$ must be
strictly positive and that the semigroup $T^{(2)}_t :=
M_{\frac{1}{h}} T_t M_h$ on $L^2 (X, h^2 m)$ is also Markovian,
where $M_g$ denotes the operator by multiplication with $g$.  Now,
clearly the semigroups $T_t$ and $T_t^{(2)}$ are intertwined by $U :
L^2 (X,m) \longrightarrow L^2 (X, h^2 m), U f = \frac{1}{h} f$.
\end{remark}

\begin{remark}
The considerations of this section can easily be carried over to
other families of Markovian operators such as semigroups of
Markovian operators over the natural numbers.
\end{remark}

\section{Regularity properties}\label{sec-Regularity}
In this section we study regularity properties of the scaling $h$
and transformation $\tau$ when the Dirichlet forms are not defined
merely on measure spaces, but on topological spaces. For that
purpose we need some compatibility of the Dirichlet form and the
underlying topology. We achieve this by working with quasi-regular
forms. Indeed, for our purposes the setting of quasi-regular forms
is not more involved than the -- maybe more common -- framework of
regular Dirichlet forms. At the same time it  offers the advantage that
we can deal with topological spaces  without local compactness
features. In this setting there exist natural replacements of the
concepts  of continuity and homeomorphism viz quasi-continuity and
quasi-homeomorphisms.  The main result of this section then shows
that the transformation associated with an intertwining order
isomorphism  is a quasi-homeomorphism (Theorem
\ref{tau_quasi-homeo}).   Along the way of
proving this result we have to develop the theory of  general nests
for the analytic capacity. This may well be of use in other
situations as well.

\medskip

First we recall some basic notions from the potential theory of
Dirichlet forms. For a comprehensive treatment see \cite{FOT94},
Chapter 2, and \cite{MR92}, Chapter III.  Since there are several
slightly different definitions for various objects of relevance in
potential theory in the literature,  we give an (almost)
comprehensive list of definitions and comment on subtleties.

Throughout this section let $X,X_1,X_2$ be Polish spaces and let
$m,m_1,m_2$ be $\sigma$-finite Borel measure of full support on
$X,X_1,X_2$. In particular, this assumption ensures that the arising
measure spaces are  standard Borel spaces.

Let $Q$ be a Dirichlet form on $X$. The {\em form norm}
$\|\cdot\|_Q$ on $D(Q)$ is given by
$$\|f\|_Q = \left(Q(f) + \|f\|_2^2\right)^{1/2}.$$

For $\phi\in L^2(X,m)$ with $\phi>0$ a.e. let $\psi=(L+1)^{-1}\phi$.
The {\em capacity} is defined as
\begin{align*}
\Capty_\psi(O)=\inf\{\norm{f}_Q^2\mid f\1_O\geq \psi\1_O\text{ a.e.}\}
\end{align*}
for $O\subset X$ open, and by
$$\Capty_\psi(E)=\inf \{ \Capty_\psi(O) \mid O\supset E\text{ open}\}$$
for arbitrary $E\subset X$. Since $\psi$ is nonnegative and belongs to $D(Q)$, the capacity is always well-defined.  We omit the index $\psi$ whenever the choice does not matter.

An ascending sequence $(G_k)_{k\in\IN}$ of subsets of $X$ is called
a \emph{nest} if
$$\lim_{k\to\infty}\Capty(G_k^c)=0.$$

From the subadditivity of the capacity (see \cite{MR92}, Theorem
2.8) it follows that for nests $(F_k)$ and $(G_k)$ the
\emph{refinement} $(F_k\cap G_k)$ is also a nest. Notice that we do
not demand the sets $G_k$ to be closed as is usually done.
Nevertheless, by the  definition of $\Capty$ and its monotonicity,
for each nest $(G_k)$ there exists a nest of closed sets $(F_k)$
with $F_k \subset G_k$.

\begin{remark}
So far only nests of closed sets seem to have been considered
in the setting of Dirichlet forms on topological spaces. In the
context of Dirichlet forms on measure spaces rater general nests
have already been studied in \cite{AH05,Sch16b}. However, let us
stress that our definition of a nest does not coincide with the ones
given there. This is due to the fact that we are in a topological
setting and work with the topological (analytic) capacity instead of
the measure theoretic one.
\end{remark}

For a subset $G$ of $X$ let
\begin{align*}
D(Q)_G=\{f\in D(Q)\mid \text{there exists }F\subset G\text{ closed
with  }f\1_{F^c}=0\text{ a.e.}\}.
\end{align*}
Hence, a function $f \in D(Q)$ belongs to $D(Q)_G$ if and only if
its measure theoretic support, i.e. the support of the measure $fm$,
is contained in $G$.

The following lemma characterizes nests in terms of the density of
functions vanishing outside the nest. It is a slight extension of
\cite{MR92}, Theorem 2.11, which only treats nests of closed sets.

\begin{lemma} \label{lemma:characterization nests in terms of capacity}
 An ascending sequence  $(G_k)$ of subsets of $X$ is a nest if and only if $\bigcup_{k\in \IN} D(Q)_{G_k}$ is dense in $D(Q)$ with respect to $\|\cdot\|_Q$.
\end{lemma}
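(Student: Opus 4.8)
The plan is to reduce both implications to the case of \emph{closed} nests, where the statement is exactly \cite{MR92}, Theorem 2.11, and to move between general and closed subsets in whichever direction is favorable for the capacity in each implication. Throughout I will use two elementary facts: the capacity is monotone (so $\Capty(A)\le\Capty(B)$ whenever $A\subset B$), and, as already noted after the definition of $D(Q)_G$, a function $f\in D(Q)$ lies in $D(Q)_G$ precisely when $\supp(fm)\subset G$; in particular $\supp(fm)$ is a closed subset of $G$ and $f\in D(Q)_{\supp(fm)}$. I will also use that $(D(Q),\|\cdot\|_Q)$ is separable, which follows because $(L+1)^{1/2}$ is an isometric isomorphism from $(D(Q),\|\cdot\|_Q)$ onto the separable space $L^2(X,m)$.

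For the implication that a nest yields density, I would start from a nest $(G_k)$. Using that $\Capty(G_k^c)$ is the infimum of $\Capty(O)$ over open $O\supset G_k^c$, I choose for each $k$ an open $O_k\supset G_k^c$ with $\Capty(O_k)\le\Capty(G_k^c)+2^{-k}$, so that $\Capty(O_k)\to 0$. Then $F_k=O_k^c$ are closed with $F_k\subset G_k$ and $\Capty(F_k^c)\to 0$. Since these need not be ascending, I pass to the finite unions $\widetilde F_k=F_1\cup\dots\cup F_k$, which are closed, ascending, still satisfy $\widetilde F_k\subset G_k$ (as $(G_k)$ is ascending), and form a nest because $\Capty(\widetilde F_k^c)\le\Capty(F_k^c)\to 0$. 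Now \cite{MR92}, Theorem 2.11, applies to the closed nest $(\widetilde F_k)$ and gives density of $\bigcup_k D(Q)_{\widetilde F_k}$; since $\widetilde F_k\subset G_k$ implies $D(Q)_{\widetilde F_k}\subset D(Q)_{G_k}$, density of $\bigcup_k D(Q)_{G_k}$ follows.

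For the converse I assume $\bigcup_k D(Q)_{G_k}$ is dense. By separability there is a countable family $(f_n)$ in $\bigcup_k D(Q)_{G_k}$ that is $\|\cdot\|_Q$-dense in $D(Q)$, and for each $n$ an index $k_n$ with $f_n\in D(Q)_{G_{k_n}}$. The sets $F_n'=\supp(f_n m)$ are closed with $F_n'\subset G_{k_n}$, and the finite unions $H_j=F_1'\cup\dots\cup F_j'$ are closed and ascending with $H_j\subset G_{K_j}$ for $K_j=\max\{k_1,\dots,k_j\}$. As $f_n\in D(Q)_{H_n}$ for every $n$, the family $(f_n)$ lies in $\bigcup_j D(Q)_{H_j}$, which is therefore dense; hence \cite{MR92}, Theorem 2.11, shows $(H_j)$ is a closed nest, i.e. $\Capty(H_j^c)\to 0$. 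Finally $H_j\subset G_{K_j}$ yields $\Capty(G_{K_j}^c)\le\Capty(H_j^c)\to 0$, and since $k\mapsto\Capty(G_k^c)$ is nonincreasing (as $(G_k)$ is ascending), the full sequence satisfies $\Capty(G_k^c)\to 0$, so $(G_k)$ is a nest.

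The genuinely delicate point, and the reason this is not immediate, is that membership in $D(Q)_{G_k}$ is an \emph{almost everywhere} vanishing condition, which by itself does not control the \emph{topological} capacity $\Capty(G_k^c)$ of the complement: there is no reason that an $f$ vanishing a.e. off $G_k$ should dominate $\psi$ on an open neighbourhood of $G_k^c$. This is exactly why I avoid trying to bound $\Capty(G_k^c)$ directly by an approximation error like $\|\psi-f\|_Q^2$, and instead route everything through closed sets, arranging the inclusions so that capacity only ever moves in the harmless direction: shrinking to closed $\widetilde F_k\subset G_k$ when proving density, and using the closed supports $F_n'\subset G_{k_n}$ together with monotonicity when proving the nest property. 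The passage to ascending finite unions, and in the converse the use of separability to extract a countable dense sequence, are the bookkeeping steps that make this reduction legitimate.
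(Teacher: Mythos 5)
Your proof is correct, and while your forward direction is essentially the paper's (the remark after the definition of nests already asserts that every nest contains a nest of closed sets $F_k\subset G_k$, by exactly your $O_k$-complement construction, after which \cite{MR92}, Theorem 2.11 is cited; your passage to ascending finite unions makes the implicit ascendingness repair explicit), your converse takes a genuinely different route. The paper does precisely what you declare you want to avoid: it fixes the reference function $\psi$ from the definition of the capacity, chooses $\psi_n\in\bigcup_{k\in\IN}D(Q)_{G_k}$ with $\psi_n\to\psi$ in $\norm{\cdot}_Q$, and bounds the capacity directly by the approximation error via the identity $\Capty(O)=\inf\{\norm{\psi-f}_Q^2\mid f\1_O=0\text{ a.e.}\}$ for open $O$ (\cite{Sch16b}, Lemma~2.72). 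Note that the paper applies this identity not to $G_k^c$ but to the open complements $F_{k_n}^c$ of the closed sets inside $G_{k_n}$ on whose complement $\psi_n$ vanishes, and then uses monotonicity to pass to $\Capty(G_{k_n}^c)$ --- so the a.e.-versus-topological tension you correctly identify is resolved there by the same ``route through closed sets'' you employ, only packaged as a quantitative formula. Your alternative instead extracts, via separability of $(D(Q),\norm{\cdot}_Q)$ (your isometry argument with $(L+1)^{1/2}$ is valid), a countable dense family, forms the closed ascending sequence $H_j$ of finite unions of measure-theoretic supports, and invokes the ``density implies vanishing capacity'' half of \cite{MR92}, Theorem 2.11 as a black box. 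What each approach buys: the paper's argument is quantitative and needs no separability, since a single approximating sequence for the one function $\psi$ suffices, at the price of the external input from \cite{Sch16b}; yours stays entirely within the already-cited equivalence of \cite{MR92}, at the price of the separability step and the union bookkeeping. One caveat to be aware of: the converse half of \cite{MR92}, Theorem 2.11 is formulated for the $h,g$-capacity there, so your argument tacitly identifies that capacity (as far as nests are concerned) with the paper's $\Capty_\psi$; since the paper's own use of the forward half rests on the same identification, this is consistent with the paper's standard of rigor, but it is an identification used in both directions rather than one.
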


\begin{proof} For a nest of closed sets $(F_k)$ the density of $\bigcup_{k \in \IN} D(Q)_{F_k}$ in $D(Q)$ follows from \cite{MR92}, Theorem 2.11. As remarked above, for an arbitrary nest $(G_k)$ there exists a nest of closed sets $F_k$ with $F_k \subset G_k$. This inclusion implies
 $$\bigcup_{k \in \IN} D(Q)_{F_k} \subset \bigcup_{k\in \IN} D(Q)_{G_k}$$
and the desired density follows from the statement for $\bigcup_{k \in \IN} D(Q)_{F_k}$.

Let $(G_k)$ be an ascending sequence of subsets of $X$ such that
$\bigcup_{k\in \IN} D(Q)_{G_k}$ is dense in $D(Q)$ with respect to
$\|\cdot\|_Q$ and let $\psi \in D(Q)$ be the function that appears
in the definition of $\Capty$. There then exists a sequence $\psi_n
\in \bigcup_{k\in \IN} D(Q)_{G_k}$ with $\psi_n \to \psi$ with
respect to $\|\cdot\|_Q$. According to our definition of $D(Q)_G$,
for each $n$ there is $k_n \in \IN$ and a closed set $F_{k_n}
\subset G_{k_n}$ with $\psi_n \1_{ F_{k_n}^c} = 0$. Moreover, it
follows from \cite{Sch16b}, Lemma~2.72 that
$$\Capty(O) = \inf\{\|\psi - f\|_Q^2 \mid f \1_O = 0 \text{ a.e.}\}$$
for every open $O \subset X$. With this observation, the choice of $(\psi_n)$ implies
\begin{align*}
\lim_{k \to \infty} \Capty_\psi(G_k^c) &= \inf_k \Capty_\psi(G_k^c) \leq \inf_n \Capty_\psi(F_{k_n}^c) \leq \inf_n \|\psi - \psi_n\|^2_Q = 0,
\end{align*}
showing that $(G_k)$ is a nest.
\end{proof}

A set $N\subset X$ is called \emph{polar} or {\em $Q$-exceptional}
if there is a nest $(G_k)_{k\in\IN}$ such that $N\subset \bigcap_k
G_k^c$.  Then, it is easy to see that  a set $N\subset X$ is polar
if and only if $\Capty(N)=0$. A pointwise property is said to hold
{\em quasi-everywhere} (q.e. for short) if it holds for all points
outside a polar set.

For a nest $(G_k)$ let
\begin{align*}
C(\{G_k\})=\{u\colon X\lra\IR\mid u|_{G_k}\text{ continuous for all }k\in\IN\}.
\end{align*}

A function $f\colon X\lra \IR$ is called \emph{quasi-continuous} if
there is a nest $(G_k)$ such that $f\in C(\{G_k\})$. If an a.e.
defined function $f$ has a quasi-continuous version, we write
$\tilde f$ for such a version (which is a.e. and q.e. unique). Note
that we also used a tilde to indicate extensions of positive
operators to $L_+(X,m)$.  We believe that no confusion should arise
from this conflict in notation.

Let $\hat X$ be a Polish space, $\hat m$ a $\sigma$-finite Borel
measure on $\hat X$ and $\hat Q$ a Dirichlet form on $L^2(\hat
X,\hat m)$. A map $\Phi\colon X\lra \hat X$ is called
\emph{quasi-homeomorphism} if there are nests $(G_k)$ in $X$, $(\hat
G_k)_{k\in\IN}$ in $\hat X$ of closed sets such that $\Phi\colon G_k\lra\hat G_k$
is a homeomorphism for all $k\in\IN$.

A Dirichlet form $Q$ on a locally compact space Polish space $X$ is
called \textit{regular} if $C_c (X)\cap D(Q)$ is dense in $C_c (X)$
with respect to the supremum norm and in $D(Q)$ with respect to the
form norm. (Here, $C_c (X)$ denotes the set of continuous functions
on $X$ with compact support). A generalization to our setting of --
not necessarily locally compact -- Polish spaces $X$  is given by
quasi-regular Dirchlet forms. Here, a Dirichlet form $Q$ is called
\emph{quasi-regular} if
\begin{itemize}
\item there exists a nest of compact sets,
\item there exists a dense subset of $(D(Q),\norm\cdot_Q)$ whose elements have qua\-si-con\-tin\-u\-ous versions,
\item there exist $f_n\in D(Q)$, $n\in\IN$, with quasi-continuous versions $\tilde f_n$ and a polar set $N$ such that $\{\tilde f_n\mid n\in\IN\}$ separates points of $X\setminus N$.
\end{itemize}

The connection between regular and quasi-regular forms is given by
the following characterization (see \cite{CMR94}, Theorem 3.7): A
Dirichlet form $Q$ is quasi-regular if and only if there exists  a
locally compact Polish space $\hat X$, a Radon measure $\hat m$ of
full support on $\hat X$, a regular Dirichlet form $\hat Q$ on
$L^2(\hat X,\hat m)$ and a quasi-homeomorphism $\Phi\colon X\lra
\hat X$ such that $\Phi_\#m=\hat m$ and $Q(f\circ\Phi)=\hat Q(f)$
for all $f\in L^2(\hat X,\hat m)$.

A set $F\subset X$ is called \emph{quasi-open} (resp.
\emph{quasi-closed}) if there exists a nest $(F_k)_{k\in\IN}$ of
closed sets such that $F\cap F_k$ is open (resp. closed) in $F_k$.
\begin{lemma}\label{preimage_quasi-open}
Let $Q$ be a quasi-regular Dirichlet form and $f\colon X\lra\IR$ quasi-continuous. Then preimages of open (resp. closed) sets under $f$ are quasi-open (resp. quasi-closed).
\end{lemma}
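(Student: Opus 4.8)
The plan is to unwind the definitions and reduce the whole statement to the continuity of $f$ on the sets of a single nest of closed sets. First I would use quasi-continuity of $f$ to fix a nest $(G_k)$ with $f \in C(\{G_k\})$, so that $f|_{G_k}$ is continuous in the subspace topology for every $k$. Since quasi-openness and quasi-closedness are phrased in terms of a nest of \emph{closed} sets, the first genuine step is to pass from $(G_k)$ to such a nest: by the observation recorded just after the definition of a nest, there is a nest of closed sets $(F_k)$ with $F_k \subset G_k$. Restricting a continuous map to a subspace preserves continuity, so $f|_{F_k}$ is continuous for every $k$ as well.

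With this nest of closed sets in hand, the open case is immediate. For open $O \subset \IR$ I would note the elementary identity
$$ f^{-1}(O) \cap F_k = (f|_{F_k})^{-1}(O), $$
and since $f|_{F_k}$ is continuous on $F_k$ and $O$ is open, the right-hand side is open in the subspace topology of $F_k$. As this holds for all $k$ and $(F_k)$ is a nest of closed sets, the definition of quasi-open gives at once that $f^{-1}(O)$ is quasi-open.

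The closed case is entirely symmetric: for closed $C \subset \IR$ one has $f^{-1}(C) \cap F_k = (f|_{F_k})^{-1}(C)$, which is closed in $F_k$ by continuity of $f|_{F_k}$, so the same nest $(F_k)$ witnesses that $f^{-1}(C)$ is quasi-closed. Note that the same nest serves both cases, and no refinement of nests is required.

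I do not expect a serious obstacle here. The only point demanding any care is that quasi-continuity is defined via an arbitrary nest whereas quasi-openness is defined via a nest of \emph{closed} sets; this mismatch is bridged precisely by the remark that every nest contains a nest of closed sets. Beyond the given quasi-continuity of $f$, no further use of quasi-regularity of $Q$ enters the argument.
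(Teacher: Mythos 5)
Your proposal is correct and follows essentially the same route as the paper: the paper's proof simply starts from a nest of closed sets with $f\in C(\{F_k\})$ (tacitly invoking the same remark you cite, that every nest contains a nest of closed sets) and then observes that $f^{-1}(A)\cap F_k$ is open (resp.\ closed) in $F_k$ by continuity of $f|_{F_k}$. The only difference is that you make the passage from an arbitrary nest to a closed one explicit, which is a fair point of care but not a genuinely different argument.
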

\begin{proof}
Let $(F_k)$ be a nest of closed sets such that $f\in C(\{F_k\})$, and let $A\subset \IR$ be open (resp. closed). Then $f ^{-1}(A)\cap F_k$ is open (resp. closed) in $F_k$ as the preimage of an open (resp. closed) set under a continuous map. Thus, $f^{-1}(A)$ is quasi-open (resp. quasi-closed).
\end{proof}

\begin{lemma}\label{images_quasi-homeo}
Let $\Phi\colon X\lra \hat X$ be a quasi-homeomorphism that maps nests to nests. Then images of quasi-open (resp. quasi-closed) sets are quasi-open (resp. quasi-closed).
\end{lemma}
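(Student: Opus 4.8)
The plan is to prove both assertions at once, since the quasi-open and quasi-closed cases are settled by the identical argument with ``open'' replaced by ``closed'' throughout. Fix the data of the quasi-homeomorphism: nests $(G_k)$ in $X$ and $(\hat G_k)$ in $\hat X$ of closed sets with $\Phi\colon G_k\to\hat G_k$ a homeomorphism for every $k\in\IN$. Because the $G_k$ ascend and each restriction $\Phi|_{G_k}$ is injective, $\Phi$ is injective on $E:=\bigcup_k G_k$; since each $\Phi|_{G_k}$ maps onto $\hat G_k$, it carries $E$ bijectively onto $\hat E:=\bigcup_k\hat G_k$, and both $E^c$ and $\hat E^c$ are polar.

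Now let $F\subset X$ be quasi-open, witnessed by a nest $(F_k)$ of closed sets with $F\cap F_k$ open in $F_k$. First I would pass to the refinement $H_k:=F_k\cap G_k$, which is again a nest (refinements of nests are nests) of closed sets. Three routine observations do most of the work: (i) $F\cap H_k$ is open in $H_k$, since it equals $(F\cap F_k)\cap H_k$ and $H_k\subset F_k$; (ii) $\Phi|_{H_k}\colon H_k\to\Phi(H_k)$ is a homeomorphism, being the restriction of $\Phi|_{G_k}$ to the subspace $H_k$; and (iii) $\hat H_k:=\Phi(H_k)$ is closed in $\hat X$, as $H_k$ is closed in $G_k$, hence $\Phi(H_k)$ is closed in $\hat G_k$, which is closed in $\hat X$. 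Moreover $(\hat H_k)$ is a nest: this is exactly where the hypothesis that $\Phi$ maps nests to nests enters, applied to $(H_k)$. Combining (i) and (ii), $\Phi(F\cap H_k)$ is open in $\hat H_k$, so it suffices to identify $\Phi(F)\cap\hat H_k$ with $\Phi(F\cap H_k)$: granting this, $\Phi(F)$ restricts to an open set on each member of the nest of closed sets $(\hat H_k)$, hence is quasi-open, and the parallel computation with ``closed'' yields the quasi-closed case.

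The inclusion $\Phi(F\cap H_k)\subset\Phi(F)\cap\hat H_k$ is trivial. For the reverse, take $y\in\Phi(F)\cap\hat H_k$, write $y=\Phi(x_2)$ with the unique $x_2\in H_k\subset E$ and $y=\Phi(x_1)$ with $x_1\in F$; if $x_1\in E$, injectivity of $\Phi$ on $E$ forces $x_1=x_2\in F$, and we are done. \textbf{The main obstacle is the case $x_1\notin E$}: off the nest $E$ the map $\Phi$ need not be injective, so a point of $F\cap E^c$ may collide under $\Phi$ with a point $x_2\in H_k\setminus F$, producing spurious elements of $\Phi(F)\cap\hat H_k$ lying in $\Phi(F\cap E^c)\subset\Phi(E^c)$. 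To remove them I would prove, as the decisive auxiliary step, that $\Phi$ maps polar sets to polar sets; then $\Phi(F\cap E^c)$ is polar, and refining $(\hat H_k)$ once more by a nest of closed sets avoiding this polar set deletes the spurious points, so that $\Phi(F)\cap\hat H_k=\Phi(F\cap H_k)$ on the refined nest. For the part of a polar set contained in $E$, this polar-to-polar property follows cleanly from injectivity of $\Phi$ on $E$ together with the nest-preserving hypothesis; the genuinely delicate contribution is $\Phi(E^c)$, which is controlled precisely when $\Phi$ is (q.e.) a bijection with $\Phi(E)=\hat E$, whence $\Phi(E^c)=\hat E^c$ is polar. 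This holds automatically in every application in the paper, where $\Phi$ stems from the transformation $\tau$ and thus possesses a q.e. inverse.
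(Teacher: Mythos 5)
Your argument is, in skeleton, the same as the paper's proof: the paper refines the quasi-openness nest $(A_k)$ against a nest $(F_k)$ on which $\Phi$ is a homeomorphism with closed image, invokes the nest-preserving hypothesis to conclude that $(\Phi(A_k\cap F_k))$ is a nest of closed sets, and obtains relative openness from the identity $\Phi(A)\cap\Phi(A_k\cap F_k)=\Phi(A\cap A_k\cap F_k)$ --- these are precisely your steps (i)--(iii) with $H_k=F_k\cap G_k$. The one place you diverge is instructive: the paper asserts this identity without comment, whereas you correctly observe that only the inclusion $\supseteq$ is automatic, and that the reverse inclusion needs injectivity, which the definition of quasi-homeomorphism guarantees only on $E=\bigcup_k G_k$; a point of $F\setminus E$ may collide under $\Phi$ with a point of $H_k\setminus F$. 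Under the literal hypotheses of the lemma (an everywhere-defined map) this is a genuine issue: a map equal to the identity off a polar segment in $\IR^3$ which sends that segment onto a set of positive capacity is a quasi-homeomorphism mapping nests to nests, yet it sends some quasi-open sets to sets that are not quasi-open. So the obstacle you isolate is real, and your repair --- show $\Phi$ maps polar sets to polar sets, then delete the polar set $\Phi(F\cap E^c)$ by a further refinement of the image nest --- is sound, at the price of an input beyond the lemma's stated hypotheses, namely that $\Phi(E^c)$ be polar.

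Two caveats. First, your phrase ``(q.e.) a bijection'' is slightly loose: to conclude $\Phi(E^c)=\hat E^c$ you need $\Phi$ to be a genuine bijection with $\Phi(E)=\hat E$; a q.e.\ inverse alone only controls $\Phi$ on $E$, which is exactly the part you already handle via injectivity. Second, the simpler fix, implicit in the paper, is to regard quasi-homeomorphisms as q.e.-defined maps, i.e., to replace $\Phi(F)$ by $\Phi(F\cap E)$ throughout --- legitimate because quasi-open sets are stable under modification by polar sets --- after which your injectivity of $\Phi$ on $E$ yields the set identity directly and no polar-to-polar step is needed at all. That reading is consistent with the only application of the lemma (Proposition \ref{U_D_loc} and its use in Section \ref{sec-Beurling-Deny}), where $\Phi=\tilde\tau$ is determined only up to equality q.e.\ and possesses a q.e.\ inverse, so your additional assumption is indeed automatic there, as you note.
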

\begin{proof}
Let $\Phi\colon X\lra \hat X$ be a quasi-homeomorphism and $A\subset X$ quasi-open. Let $(F_k)$ be a nest of closed sets in $X$ such that $\Phi(F_k)$ is closed and $\Phi\colon F_k\lra\Phi(F_k)$ is a homeomorphism for all $k\in \IN$, and let $(A_k)$ be a nest of closed subsets of $X$ such that $A\cap A_k$ is open in $A_k$ for all $k\in\IN$.

Then $(A_k\cap F_k)$ is a nest, and so is $(\Phi(A_k\cap F_k))$ by assumption. Since $\Phi$ is a homeomorphism on $F_k$ and $\Phi(F_k)$ is closed, the set $\Phi(A_k\cap F_k)$ is closed. Moreover, $\Phi(A)\cap \Phi(A_k\cap F_k)=\Phi(A\cap A_k\cap F_k)$ is open in $\Phi(A_k\cap F_k)$ as the image of an open set under a homeomorphism.

Of course, the proof for quasi-closed sets works analogously.
\end{proof}

Let $(G_k)$ be a nest. We say that $f\colon X\lra\IR$ is in the
\textit{local  space} of $(G_k)$ if for all $k \in \IN$ there exists
an $f_k\in D(Q)$ with $f|_{G_k}=f_k|_{G_k}$ a.e. We write
$D_\loc(\{G_k\})$ for the space of all functions in the local space
of $(G_k)$ and
\begin{align*}
D(Q)_\loc^\bullet =\bigcup D_\loc(\{G_k\}),
\end{align*}
where the union is taken over all nests $(G_k)$ of quasi-open sets.
This definition of the local space is taken from \cite{Kuw98},
Section 4.

\begin{lemma}\label{approx_D_loc}
Let $Q$ be a quasi-regular Dirichlet form, $f\in D(Q)_\loc^\bullet$ and $D\subset D(Q)$ a dense subspace. Then $f$ has a quasi-continuous version $\tilde f$, and there exists a sequence $(f_n)$ in $D$ such that $\tilde f_n\to \tilde f$ q.e.
\end{lemma}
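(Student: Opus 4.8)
The plan is to produce the quasi-continuous version $\tilde f$ by gluing quasi-continuous versions of the local pieces along the defining nest, and then to obtain the approximating sequence by a capacitary Borel--Cantelli argument. Unwinding the definition of $D(Q)_\loc^\bullet$, I fix a nest $(G_k)$ of quasi-open sets and functions $f_k\in D(Q)$ with $f=f_k$ a.e.\ on $G_k$; since $G_k\subseteq G_{k+1}$ this forces $f_k=f_{k+1}$ a.e.\ on $G_k$. In a quasi-regular form every element of $D(Q)$ admits a quasi-continuous version (approximate in $\norm\cdot_Q$ by functions with quasi-continuous versions and pass to a q.e.-convergent subsequence via the standard capacitary convergence theorem of \cite{MR92}), so each $f_k$ has a quasi-continuous version $\tilde f_k$.

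The key point, which I expect to be the main obstacle, is the consistency relation
$$\tilde f_k=\tilde f_{k+1}\quad\text{q.e.\ on }G_k.$$
Here $u:=f_k-f_{k+1}\in D(Q)$ vanishes a.e.\ on the quasi-open set $G_k$, and by Lemma \ref{preimage_quasi-open} the set $\{\tilde u\neq 0\}$ is quasi-open, so that $G_k\cap\{\tilde u\neq 0\}$ is a quasi-open set of measure zero (a finite intersection of quasi-open sets is quasi-open, by refining the witnessing nests). The crux is then the potential-theoretic fact that a quasi-open set of measure zero is polar, equivalently that a quasi-continuous function vanishing a.e.\ on a quasi-open set vanishes q.e.\ there; for this I would invoke the fine potential theory of \cite{MR92}. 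I stress that the naive attempt to deduce this from continuity of $\tilde u$ on the closed sets witnessing quasi-openness of $G_k$ fails, since $m$ need not have full support on those sets. Granting the consistency, all the relations hold off a single polar set $E$, and I define $\tilde f(x):=\tilde f_k(x)$ whenever $x\in G_k\setminus E$; this is unambiguous and defined for q.e.\ $x$, since $\bigcap_k G_k^c$ is polar. As $\tilde f=\tilde f_k=f_k=f$ a.e.\ on $G_k$ and $m$ does not charge polar sets, $\tilde f$ is a version of $f$.

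To see that $\tilde f$ is quasi-continuous I would assemble a single nest of closed sets on which it is continuous. Taking a nest of closed sets $(F_k)$ with $F_k\subseteq G_k$, closed nests witnessing continuity of each $\tilde f_i$, and a closed nest whose complements contain $E$, a routine diagonal refinement yields a nest $(H_k)$ of closed sets with $H_k\subseteq G_k$, $H_k\cap E=\emptyset$, and $\tilde f_i$ continuous on $H_k$ for all $i\le k$. On $H_k$ one then has $\tilde f=\tilde f_k$ pointwise with $\tilde f_k$ continuous, so $\tilde f\in C(\{H_k\})$; that is, $\tilde f$ is quasi-continuous.

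Finally, for the approximation I would use the capacitary Chebyshev inequality $\Capty(\{|\tilde v|>\lambda\})\le C\lambda^{-2}\norm v_Q^2$ for $v\in D(Q)$ (see \cite{MR92}). Since $D$ is dense in $D(Q)$, for each $k$ I pick $f_k'\in D$ with $\Capty(\{|\tilde f_k'-\tilde f_k|>1/k\})\le 2^{-k}$. Writing $A_k:=\{|\tilde f_k'-\tilde f_k|>1/k\}$, countable subadditivity of the capacity gives $\Capty(\limsup_k A_k)=0$, so $\limsup_k A_k$ is polar. For q.e.\ $x$ one has $x\in G_{k_0}$ for some $k_0$, $x\notin E$, and $x\notin A_k$ for all large $k$; hence for large $k\ge k_0$ we get $\tilde f(x)=\tilde f_k(x)$ and $|\tilde f_k'(x)-\tilde f_k(x)|\le 1/k$, so $\tilde f_k'(x)\to\tilde f(x)$. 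Thus $(f_k')$ is the desired sequence in $D$.
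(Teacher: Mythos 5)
Your proof is correct, but it takes a more self-contained route than the paper on the first half of the statement. The paper simply cites \cite{Kuw98}, Lemma~4.1, for the existence of the quasi-continuous version $\tilde f$, whereas you reconstruct it by gluing quasi-continuous versions of the local pieces along the defining nest; your crux fact --- that a quasi-open $m$-null set is polar, equivalently that a.e.\ vanishing of a quasi-continuous function on a quasi-open set upgrades to q.e.\ vanishing --- is indeed true and standard. In fact it can be obtained with tools already present in the paper: refine a closed nest witnessing simultaneously the quasi-continuity of $\tilde u$ and the quasi-openness of $G_k$, and pass to the regular nest $(\supp(\1_{F_k}m))$; on a regular $F_k$ every nonempty relatively open subset has positive measure, so the continuity argument you dismiss as ``naive'' does go through after this regularization (relative openness of $G_k\cap F_k$ and continuity of $\tilde u$ survive the passage to supports). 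For the approximation half, the two arguments agree in substance: the paper invokes \cite{MR92}, Proposition~III.3.5 (form-norm density yields quasi-uniform, hence q.e., convergence along a subsequence), applied to the local pieces $g_n$, and then controls everything on the refined nest $(F_k'\cap G_k')$, while your Chebyshev-plus-Borel--Cantelli argument is precisely the standard proof of that proposition, unpacked, followed by the pointwise exhaustion $x\in G_{k_0}\setminus E$. One small caveat there: with the paper's weighted capacity $\Capty_\psi$ the inequality $\Capty(\{\abs{\tilde v}>\lambda\})\le C\lambda^{-2}\norm{v}_Q^2$ requires a bounded choice of the reference function $\psi$ (or a $\psi$-weighted threshold); since nests and polar sets do not depend on this choice nothing is lost, but citing Proposition~III.3.5 directly, as the paper does, sidesteps the issue. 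What your route buys is independence from \cite{Kuw98} for the existence of $\tilde f$; what the paper's buys is brevity.
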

\begin{proof}
That $f$ has a quasi-continuous version is the content of
\cite{Kuw98}, Lemma 4.1. Hence there is a nest $(G_k) $ of
quasi-open subsets and a sequence $(g_k)$ in $D(Q)$ such that
$\tilde f|_{G_k}=\tilde g_k|_{G_k}$ q.e. Let $G_k'=\{x\in G_k\mid
\tilde f(x)=\tilde g_k(x)\}$.

Since $D$ is dense in $D(Q)$, it follows from \cite{MR92},
Proposition III.3.5 that for every $n\in\IN$ there is a closed set
$F_n\subset X$ with $\Capty(F_n^c)<2^{-n}$ and an $f_n\in D$ such
that
\begin{align*}
\lim_{n\to\infty}\sup_{x\in F_n}\abs{\tilde f_n(x)-\tilde g_n(x)}= 0.
\end{align*}

Let $F_k'=\bigcap_{n=k}^\infty F_k$ for $k\in\IN$. Then $(F_k')$ is ascending and
\begin{align*}
\Capty(X\setminus F_k')=\Capty\left(\bigcup_{n=k}^\infty F_k^c\right)\leq\sum_{n=k}^\infty 2^{-n}\to 0,\,k\to\infty.
\end{align*}

Hence $(F_k'\cap G_k')$ is a nest. For all $x\in F_k'\cap G_k'$ and $n\geq k$ we have
\begin{align*}
\abs{\tilde f(x)-\tilde f_n(x)}\leq \sup_{y\in F_n'\cap G_n'}\abs{\tilde f(y)-\tilde f_n(y)}=\sup_{y\in F_n'\cap G_n'}\abs{\tilde f_n(y)-\tilde g_n(y)}\to 0,\,n\to \infty.
\end{align*}
Since $X\setminus \bigcup_k(F_k'\cap G_k')$ is polar, $(\tilde f_n)$ converges q.e. to $\tilde f$.
\end{proof}

The following lemma is  the technical key to establishing regularity
properties for excessive functions.

\begin{lemma}[Main tool]\label{char_excessive}
Let $(T_t)$ be a Markovian semigroup and $Q$ the associated Dirichlet form. If $h\in L_+(X,m)$ is $(T_t)$-excessive and $f\in D(Q)$, then $f\wedge h,(f-h)_+\in D(Q)$ and $Q(f\wedge h)\leq Q(f)$, $Q((f-h)_+)\leq 4Q(f)$.
\end{lemma}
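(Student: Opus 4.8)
The plan is to run everything through the approximating quadratic forms of the semigroup and to reduce both assertions to a single monotonicity inequality in which excessiveness of $h$ is used. For $t>0$ set
$$Q^{(t)}(u,v)=\frac1t\langle u-T_tu,v\rangle,\qquad Q^{(t)}(u)=Q^{(t)}(u,u).$$
Since $(T_t)$ is symmetric, $Q^{(t)}$ is a symmetric bilinear form, and since $T_t$ is a contraction, $I-T_t\geq0$, so $Q^{(t)}(u)\geq0$ for every $u\in L^2(X,m)$. I would recall the standard facts (see \cite{FOT94}, Lemma 1.3.4) that $Q^{(t)}(u)$ increases to $Q(u)$ as $t\downarrow0$ and that $u\in D(Q)$ if and only if $\sup_{t>0}Q^{(t)}(u)<\infty$, in which case $Q(u)=\sup_{t>0}Q^{(t)}(u)$. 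Hence it suffices to show, for every fixed $t>0$,
$$Q^{(t)}(f\wedge h)\leq Q^{(t)}(f)\quad\text{and}\quad Q^{(t)}((f-h)_+)\leq Q^{(t)}(f);$$
together with $Q^{(t)}(f)\leq Q(f)$ this yields $f\wedge h,(f-h)_+\in D(Q)$ and the energy bounds (in fact even with constant $1$ in place of $4$).

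Next I would set up the algebra. Writing $u=f\wedge h$ and $w=(f-h)_+$, one has $w\geq0$, $f=u+w$ and $\abs u,\abs w\leq\abs f$, so in particular $u,w\in L^2(X,m)$. Expanding the symmetric form $Q^{(t)}$ along $f=u+w$ gives
$$Q^{(t)}(f)-Q^{(t)}(u)=2Q^{(t)}(u,w)+Q^{(t)}(w)$$
and, solving for the last summand,
$$Q^{(t)}(w)=Q^{(t)}(f)-Q^{(t)}(u)-2Q^{(t)}(u,w).$$
Because $Q^{(t)}(u)\geq0$ and $Q^{(t)}(w)\geq0$, both desired inequalities follow at once from the single claim
$$Q^{(t)}(u,w)\geq0.$$

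The key step, and the only place where excessiveness enters, is this claim, which amounts to $\langle T_tu,w\rangle\leq\langle u,w\rangle$. On $\{w>0\}=\{f>h\}$ one has $u=h$, while $w=0$ off this set, so $uw=hw$ a.e.\ and therefore $\langle u,w\rangle=\langle h,w\rangle$. On the other hand $u=f\wedge h\leq h$, and since the extension $\widetilde{T_t}$ to $L_+(X,m)$ is positive and additive, $u\leq h$ forces $\widetilde{T_t}u\leq\widetilde{T_t}h$; combined with excessiveness $\widetilde{T_t}h\leq h$ this gives $T_tu=\widetilde{T_t}u\leq h$ a.e. As $w\geq0$, integrating yields $\langle T_tu,w\rangle\leq\langle h,w\rangle=\langle u,w\rangle$, which is exactly $Q^{(t)}(u,w)\geq0$. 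Note that no kernel representation of $T_t$ is needed: only positivity, additivity of the extension, the contraction property and excessiveness are used.

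The main technical obstacle is the comparison $\widetilde{T_t}u\leq h$ when $h$ may take the value $+\infty$, so that $u$ is a sign-changing $L^2$-function dominated by a possibly infinite $h$. I would justify $\widetilde{T_t}u\leq\widetilde{T_t}h$ by applying $\widetilde{T_t}$ to the identity $h+u_-=(h-u)+u_+$ between elements of $L_+(X,m)$ and using additivity of the extension; this rearranges to $\widetilde{T_t}h-T_tu=\widetilde{T_t}(h-u)\geq0$, where $\widetilde{T_t}u_\pm=T_tu_\pm$ since $u_\pm\in L^2_+$. One also checks that all pairings are finite: $u,w\in L^2$ makes $\langle u,w\rangle$ and $\langle T_tu,w\rangle$ genuine $L^2$ inner products, and $h<\infty$ a.e.\ on $\{w>0\}$ makes $\langle h,w\rangle=\langle u,w\rangle$ finite. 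With $Q^{(t)}(u,w)\geq0$ established, letting $t\downarrow0$ in the two displayed inequalities and invoking the approximation criterion finishes the proof.
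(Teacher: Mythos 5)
Your proof is correct, and it takes a genuinely different route from the paper. The paper's own proof is short and partly outsourced: it disposes of the $f\wedge h$ statement by citing \cite{Kaj17}, Proposition 4, and then obtains the bound for $(f-h)_+$ from the identity $(f-h)_+=f-f\wedge h$ together with the crude estimate $Q(a-b)\leq 2Q(a)+2Q(b)$, which is exactly where the constant $4$ comes from. You instead prove both statements simultaneously and from scratch via the approximating forms $Q^{(t)}(u)=\frac1t\langle u-T_tu,u\rangle$ of \cite{FOT94}, Lemma 1.3.4, reducing everything to the cross-term positivity $Q^{(t)}(f\wedge h,(f-h)_+)\geq 0$. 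This is the classical argument for the unit contraction (the case $h\equiv 1$), with excessiveness $\widetilde{T_t}h\leq h$ playing the role that sub-Markovianity $\widetilde{T_t}1\leq 1$ plays there; your verification of $T_t(f\wedge h)\leq h$ via the identity $h+u_-=(h-u)+u_+$ in $L_+(X,m)$ and additivity of the extension is exactly the care needed when $h$ may be infinite, and the convention $hw=uw$ on $\{w>0\}\subset\{h<\infty\}$ keeps all pairings finite. What your approach buys is twofold: it removes the dependence on the cited proposition, and it yields the sharper inequality $Q((f-h)_+)\leq Q(f)$ with constant $1$ instead of $4$ --- precisely the improvement that the paper's remark following the lemma attributes to a proof along the lines of \cite{Sch16b}, Lemma 2.50 and Theorem 2.57, and deliberately declines to carry out because the weaker bound suffices for its purposes.
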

\begin{proof}
The statement about $f\wedge h$ is the  content of \cite{Kaj17},
Proposition 4. As for the statement about $(f-h)_+$, note that
$(f-h)_+=f-f\wedge h$. Thus
\begin{flalign*}
&&Q((f-h)_+)=Q(f-f\wedge h)\leq 2 Q(f)+2Q(f\wedge h)\leq 4 Q(f).&&\qedhere
\end{flalign*}
\end{proof}

\begin{remark}
With a proof along the lines of \cite{Sch16b}, Lemma 2.50 and Theorem 2.57, one can show $Q((f-h)_+)\leq Q(f)$, but the weaker estimate from the lemma is sufficient for our purposes.
\end{remark}

\begin{lemma}\label{h_local}
Let $(T_t)$ be a Markovian semigroup and assume that the associated
Dirichlet form $Q$ is quasi-regular. If $h$ is $(T_t)$-excessive,
then there is a nest $(G_k)$ of quasi-open sets such that $h\wedge
M\in D_\loc(\{G_k\})$ for all $M\geq 0$.
\end{lemma}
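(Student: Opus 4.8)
The plan is to build the nest from a single reference function together with the truncations $h\wedge(n\psi)$, and to read off the local membership precisely on the region where these truncations have not yet saturated. First I would fix the function $\psi=(L+1)^{-1}\phi\in D(Q)$ (with $\phi>0$ a.e.) from the definition of $\Capty$, so that $\psi>0$ a.e., and pass to a quasi-continuous version $\tilde\psi$; this exists because $D(Q)\subset D(Q)_\loc^\bullet$, so Lemma \ref{approx_D_loc} applies. For $n\in\IN$ set $g_n:=h\wedge(n\psi)$. Since $n\psi\in D(Q)$ and $h$ is $(T_t)$-excessive, Lemma \ref{char_excessive} gives $g_n\in D(Q)$, and $g_n$ then has a quasi-continuous version $\tilde g_n$. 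I define
$$G_n:=\{x\in X\mid \tilde g_n(x)<n\tilde\psi(x)\}.$$
As the preimage of $(-\infty,0)$ under the quasi-continuous function $\tilde g_n-n\tilde\psi$, each $G_n$ is quasi-open by Lemma \ref{preimage_quasi-open}, and the sequence is ascending q.e., since $g_n<n\psi\le(n+1)\psi$ on $G_n$ forces $g_{n+1}=h=g_n$ there.

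The local membership is then immediate and, crucially, uniform in $M$. On $G_n$ one has $\tilde g_n<n\tilde\psi$, so the minimum defining $g_n$ is attained by $h$, i.e.\ $g_n=h$ a.e.\ on $G_n$. Hence for every $M\ge 0$,
$$(h\wedge M)\1_{G_n}=(g_n\wedge M)\1_{G_n}\quad\text{a.e.},$$
and $g_n\wedge M\in D(Q)$ because truncation by the constant $M$ is a normal contraction. Thus $h\wedge M$ lies in $D_\loc(\{G_n\})$ for every $M\ge 0$, witnessed by the single sequence $(g_n\wedge M)_n$ — provided $(G_n)$ is a nest.

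It therefore remains to prove $\lim_n\Capty(G_n^c)=0$, which I expect to be the main obstacle. Here I would use $u_n:=\tfrac1n g_n=\psi\wedge(h/n)$, which lies in $D(Q)$ with $Q(u_n)\le Q(\psi)$ by Lemma \ref{char_excessive} applied to the excessive function $h/n$, and which equals $\psi$ exactly on $G_n^c=\{h\ge n\psi\}$. Where $h$ is finite, $h/n\searrow 0$, so $u_n\searrow 0$ a.e.; writing $u_1-u_n\nearrow u_1=\psi\wedge h\in D(Q)$ with uniformly bounded energy and invoking monotone convergence for the closed form $Q$ yields $\|u_n\|_Q\to 0$. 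A Chebyshev-type bound for $\Capty$ — any nonnegative quasi-continuous $w\in D(Q)$ with $\tilde w\ge\tilde\psi$ q.e.\ on a set $E$ satisfies $\Capty(E)\le\|w\|_Q^2$, which follows from the definition of $\Capty$ and quasi-continuity — applied to $w=u_n$ and $E=G_n^c$ then gives $\Capty(G_n^c)\le\|u_n\|_Q^2\to 0$. Hence $(G_n)$ is a nest and the proof is complete when $h<\infty$ a.e.

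The genuinely delicate point, and the reason the capacity estimate is the heart of the argument, is the set $\{h=\infty\}$: there $u_n=\psi$ for all $n$, so the estimate above only controls $\Capty(G_n^c)$ once this set is known to be negligible. For the scaling function of an order isomorphism $h$ is finite a.e.\ and nothing further is needed. In general $\{h=\infty\}$ is invariant and may have positive capacity, while on it $h\wedge M=M$; to cover it I would invoke quasi-regularity to fix a nest of compact sets and use that constants agree locally with elements of $D(Q)$ (through the regular representative supplied by the quasi-homeomorphism characterization preceding Lemma \ref{preimage_quasi-open}), and then refine $(G_n)$ by these compact sets, choosing the $M$-dependent witness to equal $M$ on that part.
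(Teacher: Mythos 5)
Your construction on $\{h<\infty\}$ is appealing, but it contains a step that fails as stated, and it defers precisely the case that carries the content of the lemma. First, the convergence claim: from $u_1-u_n\nearrow u_1$ with $\sup_n Q(u_n)\leq Q(\psi)$, ``monotone convergence for the closed form $Q$'' yields only that $u_n\to 0$ \emph{weakly} in $(D(Q),\norm{\cdot}_Q)$, together with lower semicontinuity of $Q$ along the limit; it does not give $Q(u_n)\to 0$. The general principle you invoke is false: on $\IR$ with the Dirichlet energy, $v_n(x)=\max(2^{-n}-2^{n-1}\abs{x},\,0)$ decreases pointwise to $0$, converges to $0$ in $L^2$, and has $Q(v_n)=1$ for all $n$, so $v_1-v_n\nearrow v_1$ with uniformly bounded energy while $\norm{v_n}_Q\not\to 0$. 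Your capacity estimate is salvageable, but by a different mechanism: since $u_n\to 0$ weakly in the form norm (here you use $h<\infty$ a.e.\ and domination by $\psi$), Banach--Saks gives a subsequence whose Ces\`aro means $w_N=\frac1N\sum_{j=1}^N u_{n_j}$ converge to $0$ strongly; as $(u_n)$ is pointwise nonincreasing and $u_n=\tilde\psi$ q.e.\ on $G_n^c$, one gets $w_N\geq\tilde\psi$ q.e.\ on $G_{n_N}^c$, whence your Chebyshev bound yields $\Capty(G_{n_N}^c)\leq\norm{w_N}_Q^2\to 0$, and monotonicity of $n\mapsto\Capty(G_n^c)$ finishes. (Two smaller repairs: your $G_n$ ascend only up to polar sets since the quasi-continuous versions are chosen separately, and the Chebyshev bound for a merely quasi-closed set $E$ requires passing from quasi-open to open sets in the definition of $\Capty$ -- standard, but not ``immediate from the definition''.)

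Second, and decisively, the set $\{h=\infty\}$. Excessive functions are $[0,\infty]$-valued, and $h\equiv\infty$ is excessive; in that case $g_n=n\psi$, every $G_n$ is empty, your sequence is not a nest, and $h\wedge M=M$ still has to be placed in a local space. More generally, $\{h=\infty\}$ is disjoint from every $G_n$ up to polar sets and may have positive capacity, so the whole construction collapses there. You explicitly defer this case to a sketch, but the tool you name for it --- quasi-regularity supplying a nest of quasi-open sets on which constants agree locally with elements of $D(Q)$ --- is exactly \cite{Kuw98}, Theorem~4.1, and it proves the entire lemma in three lines; this is the paper's proof: take a nest $(G_k)$ of quasi-open sets and $f_k\in D(Q)$ with $f_k=1$ a.e.\ on $G_k$; then $(Mf_k)\wedge h\in D(Q)$ by Lemma~\ref{char_excessive} and coincides with $h\wedge M$ on $G_k$, so $h\wedge M\in D_\loc(\{G_k\})$ for all $M\geq 0$ simultaneously, with no capacity estimate and no finiteness hypothesis on $h$. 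In short, the part of your argument left unproven already subsumes the statement, while the part you work out both assumes $h<\infty$ a.e.\ and rests on an invalid convergence step.
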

\begin{proof}
By \cite{Kuw98}, Theorem 4.1, there is a nest $(G_k)$ of quasi-open
sets and a sequence $(f_k)\in D(Q)$ with $f_k|_{G_k}=1$ a.e. for all
$k\in\IN$. According to Lemma \ref{char_excessive}, $(Mf_k)\wedge
h\in D(Q)$. Now $(Mf_k\wedge h)|_{G_k}=(h\wedge M)|_{G_k}$, hence
$h\wedge M\in D_\loc(\{G_k\})$.
\end{proof}

\begin{proposition}[Excessive functions contained in local space]\label{h_local_strong}
Let $(T_t)$ be a Markovian semigroup and assume that the associated Dirichlet form $Q$ is quasi-regular. If $h$ is $(T_t)$-excessive, then $h\in D(Q)^\bullet_\loc$.
\end{proposition}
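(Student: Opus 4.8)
The plan is to produce a single nest $(H_k)$ of quasi-open sets on which $h$ agrees almost everywhere with genuine form-domain functions, so that $h\in D_\loc(\{H_k\})\subseteq D(Q)^\bullet_\loc$. The starting point is Lemma \ref{h_local}, which furnishes a nest $(G_k)$ of quasi-open sets together with, for every $M\geq 0$ and every $k$, a function $f_{k,M}\in D(Q)$ with $(h\wedge M)|_{G_k}=f_{k,M}|_{G_k}$ almost everywhere. Since $h\wedge M\in D_\loc(\{G_k\})\subseteq D(Q)^\bullet_\loc$, Lemma \ref{approx_D_loc} shows that $h\wedge M$ has a quasi-continuous version $g_M$. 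As quasi-continuous versions respect almost-everywhere inequalities quasi-everywhere, the $g_M$ increase quasi-everywhere with $M$ to a function $\tilde h$ that is a version of $h$ and satisfies $g_M\leq M$ quasi-everywhere. The only thing separating us from the claim is that $h$ itself, rather than its truncations, need not lie in a form domain; the task is therefore to localise to a nest on which $h$ is bounded.

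Here is the construction I would carry out. For each $M$ the level set $\{g_M<M\}$ is quasi-open by Lemma \ref{preimage_quasi-open}, and on it one has $h\wedge M=g_M<M$ almost everywhere, whence $h=h\wedge M$ almost everywhere there. Consequently, on $H_{k}:=G_k\cap\{g_{M_k}<M_k\}$ --- a binary intersection of quasi-open sets and hence quasi-open --- the identity $h=f_{k,M_k}$ holds almost everywhere, with $f_{k,M_k}\in D(Q)$. Choosing $M_k\nearrow\infty$ makes $(H_k)$ ascending, and if $(H_k)$ is a nest we immediately obtain $h\in D_\loc(\{H_k\})$, as desired. Thus everything reduces to verifying the nest property $\Capty(H_k^c)\to 0$. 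Since $\Capty(H_k^c)\leq\Capty(G_k^c)+\Capty(\{g_{M_k}\geq M_k\})$ and $(G_k)$ is a nest, this amounts to the quasi-boundedness statement $\lim_{M\to\infty}\Capty(\{g_M\geq M\})=0$.

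The hard part is precisely this capacity decay, and here I would bring in the compact nest provided by quasi-regularity. After refining $(G_k)$ by a nest of compact sets and passing to a common nest on which all $g_M$ are continuous, the sets $\{g_M\geq M\}$ meet each compact nest member in a closed, hence compact, set; these shrink with $M$ to the infinity set $\{\tilde h=\infty\}$ of $h$. Choquet continuity of the capacity from above along decreasing sequences of compact sets then identifies $\lim_{M\to\infty}\Capty(\{g_M\geq M\})$ with $\Capty(\{\tilde h=\infty\})$. It thus remains to show that $\{\tilde h=\infty\}$ is polar. This is the genuine analytic obstacle: a priori $\{\tilde h=\infty\}$ is only an $m$-null set (as $h<\infty$ almost everywhere), and $m$-null sets may carry positive capacity. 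I would resolve it through the supermartingale/superharmonic nature of excessive functions --- the dichotomy that a finite-a.e. excessive function is automatically finite quasi-everywhere, its infinity set being polar --- which can be extracted from the potential theory already invoked via \cite{Kaj17,Sch16b}. Granting this, $(H_k)$ is a nest and the proposition follows.
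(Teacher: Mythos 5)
Your reduction is structurally sound and in fact runs parallel to the paper's own final assembly: the paper likewise localizes via sublevel sets of the truncations (it works with $H_n=\{\widetilde{h\wedge 2n}\leq n\}$ and $G_n=\{\widetilde{h\wedge 2n}<n+1\}$, refined with the nest from Lemma \ref{h_local}), and your bookkeeping steps --- quasi-openness of $\{g_M<M\}$ via Lemma \ref{preimage_quasi-open}, the identity $h=h\wedge M$ a.e. there, subadditivity of $\Capty$, and continuity of the capacity from above along decreasing compacts after refining by a compact nest --- are all fine modulo polar adjustments. But there is a genuine gap exactly where you locate it: the polarity of $\{\tilde h=\infty\}$, equivalently the decay $\Capty(\{g_M\geq M\})\to 0$, is not proved but delegated to an assertion that it ``can be extracted'' from \cite{Kaj17,Sch16b}. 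It cannot, at least not directly: \cite{Kaj17} contains no such statement (the paper uses it only for Lemma \ref{char_excessive} and the recurrence dichotomy), and \cite{Sch16b} works with a \emph{measure-theoretic} capacity whose nests and polar sets the paper explicitly distinguishes from the topological (analytic) capacity used here --- see the remark following the definition of nests. The remark after the proposition even states that the authors could not find the result in the literature; the polarity of the infinity set of an a.e.\ finite excessive function in the quasi-regular setting \emph{is} the substantive content of the proposition, not an available input. A probabilistic route via the supermartingale property would additionally require passing from the $L^2$-a.e.\ notion of excessiveness ($T_th\leq h$ in $L_+(X,m)$) to a pointwise excessive nearly Borel version of the associated process and identifying probabilistic exceptional sets with analytic polar sets, none of which is developed here.

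The paper closes precisely this gap by a purely analytic argument you should supply in place of the citation: by Lemma \ref{lemma:characterization nests in terms of capacity}, $(H_n)$ is a nest once $\bigcup_n D(Q)_{H_n\cap F_n}$ is dense in $\bigcup_n D(Q)_{F_n}$ for a suitable closed nest $(F_n)$. Given $0\leq f\in D(Q)_{F_k}$, one sets $f_n=(f\wedge n-\tfrac hn\wedge n)_+$; since $\tfrac hn\wedge n$ is again excessive, Lemma \ref{char_excessive} gives $f_n\in D(Q)$ with the uniform bound $Q(f_n)\leq 4Q(f\wedge n)\leq 4Q(f)$, and $f_n=0$ a.e.\ off $H_{n^2}\cap F_{n^2}$ because $h>n^2$ forces $\tfrac hn\wedge n=n\geq f\wedge n$. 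As $f_n\to f$ in $L^2$, weak compactness of bounded sets in $(D(Q),\norm{\cdot}_Q)$ identifies the weak limit as $f$, and since weak and strong closures of convex sets coincide, $f$ lies in the strong closure of $\bigcup_n D(Q)_{H_n\cap F_n}$. This is the capacity-decay statement your proof needs; with it (or an equivalent argument) inserted, your construction of the nest $(H_k)=(G_k\cap\{g_{M_k}<M_k\})$ and the conclusion $h\in D_\loc(\{H_k\})\subset D(Q)^\bullet_\loc$ go through.
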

\begin{proof}

By Lemma \ref{h_local} and \cite{Kuw98}, Lemma 4.1 there is a
quasi-continuous version $\widetilde{h\wedge n}$ of $h\wedge n$ for
all $n\in\IN$. We first show that $(H_n)=(\{\widetilde{h\wedge
2n}\leq n\})$ is a nest.

The sequence $(H_n)$ consists of quasi-closed sets, see
Lemma~\ref{preimage_quasi-open}. Therefore, there exists a nest of
closed sets $(F_n)$ such that for each $n$ the set $H_n \cap F_n$ is
closed. By Lemma~\ref{lemma:characterization nests in terms of
capacity} and since $(F_n)$ is a nest, it suffices to prove that
$\bigcup_{n \in \IN} D(Q)_{H_n \cap F_n}$ is dense in $\bigcup_{n
\in \IN} D(Q)_{F_n}$.

To this end, let $k \in \IN$ and $f\in D(Q)_{F_k}$ with $f\geq 0$.
Since $h$ is $(T_t)$-excessive, so is $\frac{h}{n}\wedge n$. Let
$f_n:=(f\wedge n-\frac{h}{n}\wedge n)_+$. Obviously, $f_n=0$ a.e. on
$H_{n^2}^c$ and  $|f_n| \leq |f| =  0$ a.e. on $F_k^c$. For $n^2
\geq k$ this shows $f_n = 0$ a.e. on $(H_{n^2} \cap F_{n^2})^c$.  By
Lemma \ref{char_excessive} we have $f_n\in D(Q)$, and $H_{n^2} \cap
F_{n^2}$ is closed. Therefore, $f_n \in \bigcup_{k \in \IN}
D(Q)_{H_k \cap F_k}$ for all $n \in \IN$.

According to Lemma \ref{char_excessive},  $f_n\in D(Q)$ and
$Q(f_n)\leq 4 Q(f\wedge n)\leq 4 Q(f)$. Thus, every subsequence of $(f_n)$ has a weakly
convergent subsequence in $(D(Q),\norm\cdot_Q)$. Since $f_n\to f$ in
$L^2$, the limit is $f$. Hence $f_n\to f$ weakly in
$(D(Q),\norm\cdot_Q)$. Weak closures and strong closures of convex
sets agree in Hilbert spaces. Therefore, $f$ belongs to the  closure
of $\bigcup_{n \in \IN} D(Q)_{H_n \cap F_n}$ in
$(D(Q),\norm\cdot_Q)$ and we arrive at the conclusion that $(H_n)$
is a nest.

By Lemma \ref{preimage_quasi-open} the set $G_n=\{\widetilde{h\wedge
2n}<n+1\}$ is quasi-open.  Since $H_n\subset G_n$,  $(G_n)$ is  a
nest.  Let $(\hat G_n)$ be a nest as in Lemma \ref{h_local}. Then
the refinement $(G_n \cap \hat G_n)$ is also a nest. For $n\in\IN$
let $f_{n}\in D(Q)$ such that $f_{n}|_{\hat G_n} =h\wedge
(n+1)|_{\hat G_n}$ a.e. By the definition of $G_n$ we have $(h\wedge
(n+1))|_{G_n}=h|_{G_n}$ a.e. Thus, $f_{n}|_{G_n \cap \hat
G_n}|=h|_{G_n\cap \hat G_n}$ a.e. and we arrive at $h\in D_{\rm
loc}(\{G_k \cap \hat G_k\})$. Since $G_n$ and $\hat G_n$ are
quasi-open, so is their intersection and we obtain $h\in
D(Q)^\bullet_\loc$.
\end{proof}

\begin{remark}
We were not able to find the  result  above  in the
literature. For a strongly local regular Dirichlet form it is proven
in \cite{Stu94}, Lemma~3, that locally bounded excessive function
belong to the local space (with respect to a nest of compact sets).
\end{remark}

\begin{lemma}\label{comp_tau}
Let $Q_i$ be irreducible quasi-regular Dirichlet forms on $L^2(X_i,m_i)$, $i \in \{1,2\}$, and let $U:L^2(X_1,m_1) \longrightarrow  L^2(X_2,m_2)$ be an order isomorphism intertwining the associated semigroups. Denote by $\tau$ the associated transformation. Then there is a nest $(G_k)_{k\in\IN}$ of quasi-open subsets of $X_1$ such that $f\circ\tau\in D(Q_2)$ for all $f\in \bigcup_k D(Q_1)_{G_k}\cap L^\infty(X_1,m_1)$.
\end{lemma}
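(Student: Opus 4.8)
The plan is to exploit the scaling associated with the \emph{inverse} order isomorphism. First I would observe that $U^{-1}$ is itself an order isomorphism, now intertwining $(T_t^{(2)})$ and $(T_t^{(1)})$, and that a direct computation from Proposition \ref{prop-Lamperti} identifies its transformation as $\tau^{-1}$ and its scaling as $w:=(h\circ\tau^{-1})^{-1}$ on $X_1$. By Lemma \ref{h_excessive} applied to $U^{-1}$, the function $w$ is $(T_t^{(1)})$-excessive. The key pointwise identity, which drives the whole argument, is
\begin{align*}
w\circ\tau=\big((h\circ\tau^{-1})\circ\tau\big)^{-1}=h^{-1},
\end{align*}
valid almost everywhere because $\tau^{-1}\circ\tau=\id$. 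Thus $w$ is the excessive function on $X_1$ whose local structure will produce the desired nest.

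Next I would feed $w$ into Proposition \ref{h_local_strong}. This gives a nest $(G_k)$ of quasi-open subsets of $X_1$ with $w\in D_\loc(\{G_k\})$; inspecting its proof (the sets $\{\widetilde{w\wedge 2n}<n+1\}$) shows that one may in addition arrange that $w$ is bounded on each $G_k$, say $w\le C_k$ on $G_k$. For each $k$ I then pick $w_k\in D(Q_1)$ with $w_k|_{G_k}=w|_{G_k}$ a.e.\ and replace it by its truncation to $[0,C_k]$; since $0\le w\le C_k$ on $G_k$, this leaves a function $w_k\in D(Q_1)\cap L^\infty(X_1,m_1)$ still satisfying $w_k|_{G_k}=w|_{G_k}$ a.e.

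The conclusion is then a short computation. Given $f\in D(Q_1)_{G_k}\cap L^\infty(X_1,m_1)$, the product $fw_k$ again lies in $D(Q_1)$, because $D(Q_1)\cap L^\infty$ is an algebra. Since $UD(Q_1)=D(Q_2)$ by Corollary \ref{corollary:intertwined forms}, we get $U(fw_k)\in D(Q_2)$, and Proposition \ref{prop-Lamperti} yields
\begin{align*}
U(fw_k)=h\cdot\big((fw_k)\circ\tau\big)=h\,(f\circ\tau)\,(w_k\circ\tau).
\end{align*}
As $f$ vanishes off a closed set $F\subset G_k$, the factor $f\circ\tau$ vanishes off $\tau^{-1}(F)\subset\tau^{-1}(G_k)$, and on that set $w_k\circ\tau=w\circ\tau=h^{-1}$; hence $U(fw_k)=f\circ\tau$ almost everywhere, which gives $f\circ\tau\in D(Q_2)$, as required.

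I expect the main obstacle to be the measure-theoretic bookkeeping that upgrades the a.e.\ identity $w_k=w$ on $G_k$ to the a.e.\ identity $w_k\circ\tau=w\circ\tau$ on $\tau^{-1}(G_k)$: this is precisely where I would invoke Lemma \ref{adjoint_U}, whose conclusion that $\tau_\#m_2$ and $m_1$ are mutually absolutely continuous guarantees that $m_1$-null subsets of $G_k$ pull back under $\tau$ to $m_2$-null sets. The second delicate point is extracting from Proposition \ref{h_local_strong} a nest on which $w$ is genuinely bounded, rather than only the membership $w\in D(Q_1)^\bullet_\loc$; for this I would return to the construction in its proof instead of using the statement as a black box.
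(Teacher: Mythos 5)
Your proposal is correct and follows essentially the same route as the paper: the paper also takes the scaling $H$ of $U^{-1}$ (your $w$), uses $H\circ\tau=1/h$, invokes Lemma \ref{h_excessive} and Proposition \ref{h_local_strong} to get $H\in D(Q_1)_\loc^\bullet$, and then concludes via the algebra property of $D(Q_1)\cap L^\infty(X_1,m_1)$ and Corollary \ref{corollary:intertwined forms} exactly as you do. The only cosmetic difference is that the paper obtains the quasi-open nest with \emph{bounded} local representatives $H_k\in D(Q_1)\cap L^\infty(X_1,m_1)$ directly from \cite{Kuw98}, Theorem 4.1, whereas you re-extract boundedness from the construction in Proposition \ref{h_local_strong} and truncate---both are legitimate.
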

\begin{proof}
Denote by $H$ the scaling of $U^{-1}$. We have $H \circ \tau = 1/h,$ where $h$ is the scaling of $U$. By Lemma \ref{h_excessive} and Proposition \ref{h_local_strong} it satisfies $H\in D(Q_1)_\loc^\bullet$. By \cite{Kuw98}, Theorem 4.1, there is a nest $(G_k)_{k\in\IN}$ of quasi-open subsets of $X_1$ and a sequence $(H_k)_{k\in\IN}$ in $D(Q_1)\cap L^\infty(X_1,m_1)$ such that $H|_{G_k}=H_k|_{G_k}$ a.e. for all $k\in\IN$.

Let $f\in D(Q_1)\cap L^\infty(X_1,m_1)$ with $f|_{G_k^c}=0$ a.e. Then
\begin{align*}
f\circ\tau=h(Hf)\circ\tau=h(H_kf)\circ\tau\quad\text{a.e.}
\end{align*}
Since $D(Q_1)\cap L^\infty(X_1,m_1)$ is an algebra (see \cite{FOT94}, Theorem 1.4.2), we have $H_k f\in D(Q_1)$ and therefore $f\circ\tau=U(H_k f)\in D(Q_2)$ by Corollary~\ref{corollary:intertwined forms}.
\end{proof}

\begin{lemma}\label{h=0_polar}
Let $Q_i$ be irreducible quasi-regular Dirichlet forms on $L^2(X_i,m_i)$, $i \in \{1,2\}$, and let $U:L^2(X_1,m_1) \longrightarrow  L^2(X_2,m_2)$ an order isomorphism intertwining the associated semigroups. Denote by $h$ and $\tau$ the associated scaling and transformation. Then $\{\tilde h=0\}$ is polar, $f\circ\tau$ has a quasi-continuous version and $\widetilde{Uf}=\tilde h\cdot\widetilde{f\circ\tau}$ q.e. for all $f\in D(Q_1)$.
\end{lemma}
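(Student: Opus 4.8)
The goal is to establish three facts about the scaling $h$ and transformation $\tau$ of an intertwining order isomorphism $U$ between irreducible quasi-regular Dirichlet forms: that the set $\{\tilde h = 0\}$ is polar, that $f \circ \tau$ admits a quasi-continuous version for every $f \in D(Q_1)$, and that the quasi-continuous versions satisfy $\widetilde{Uf} = \tilde h \cdot \widetilde{f \circ \tau}$ quasi-everywhere. I would treat these in an order that lets each fact feed into the next, beginning with the observation that $h$ is $(T_t^{(2)})$-excessive (Lemma \ref{h_excessive}) and hence lies in $D(Q_2)^\bullet_\loc$ by Proposition \ref{h_local_strong}, so $\tilde h$ exists as a quasi-continuous version on $X_2$. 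Symmetrically the scaling $H$ of $U^{-1}$ satisfies $\tilde H$ exists on $X_1$, and we have the pointwise relation $H \circ \tau = 1/h$ coming from the composition $U^{-1} U = \id$.

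For the \emph{polarity of} $\{\tilde h = 0\}$, the plan is to exploit that $1/h = H \circ \tau$ is essentially $\tilde H$ transported through $\tau$, and that $\tilde H$ is a genuine quasi-continuous (hence q.e.\ finite) function. Concretely, I would argue that $\{\tilde h = 0\}$ corresponds under $\tau$ to $\{\tilde H = +\infty\}$, and a quasi-continuous function is finite quasi-everywhere (being continuous on each set of a nest, with the complements shrinking to zero capacity). The key technical point is that $\tau$ should map the polar set $\{\tilde H = \infty\}$ in $X_1$ to a polar set in $X_2$; here I would use that $U$ and $U^\ast$ transport capacity-negligible sets appropriately, leaning on Corollary \ref{corollary:intertwined forms} (which says $U$ scales $Q_1$ to $Q_2$ up to the constant $\|U\|^2$) so that the capacity $\Capty_1$ on $X_1$ and $\Capty_2$ on $X_2$ are comparable under the transformation. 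Since $\tilde H$ is finite q.e., its preimage-of-infinity is polar in $X_1$, and transporting gives that $\{\tilde h = 0\}$ is polar in $X_2$.

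For the \emph{quasi-continuous version of $f \circ \tau$}, I would invoke Lemma \ref{comp_tau}: there is a nest $(G_k)$ of quasi-open sets in $X_1$ with $f \circ \tau \in D(Q_2)$ for all $f \in \bigcup_k D(Q_1)_{G_k} \cap L^\infty$. For general $f \in D(Q_1)$, I would first reduce to bounded $f$ by truncation (using that $(f \wedge M) \vee (-M) \to f$ in form norm and pointwise q.e.\ on $X_1$, and that truncation commutes suitably with $\tau$), and then approximate by functions supported in the nest using Lemma \ref{lemma:characterization nests in terms of capacity}. The resulting elements $f_n \circ \tau$ lie in $D(Q_2)$, hence have quasi-continuous versions; the plan is to show $(f_n \circ \tau)$ is form-convergent in $D(Q_2)$ so that a subsequence of quasi-continuous versions converges q.e., furnishing $\widetilde{f \circ \tau}$. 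Form-convergence should follow because $U(H_k f_n) = f_n \circ \tau$ together with the boundedness of $U$ and control of the $H_k$ from Lemma \ref{comp_tau}.

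Finally, for the \emph{identity} $\widetilde{Uf} = \tilde h \cdot \widetilde{f \circ \tau}$, the starting point is the a.e.\ equality $Uf = h \cdot (f \circ \tau)$ from Proposition \ref{prop-Lamperti}. Both $\widetilde{Uf}$ and $\tilde h \cdot \widetilde{f \circ \tau}$ are quasi-continuous (the latter because a product of quasi-continuous functions is quasi-continuous, intersecting the two underlying nests), and two quasi-continuous functions that agree a.e.\ agree q.e., since $m$ has full support and the difference, being quasi-continuous and zero a.e., must vanish q.e. I expect the \textbf{main obstacle} to be the polarity step: controlling the image under $\tau$ of a polar set and comparing the two analytic capacities, because $\tau$ is only an a.e.-defined measurable map at this stage and transporting a \emph{quasi-everywhere} (capacity) statement rather than an \emph{almost-everywhere} (measure) statement is precisely the passage from measure theory to potential theory that the introduction flags as the heart of the argument; getting the capacity comparison to respect $\tau$ without yet knowing $\tau$ is a quasi-homeomorphism requires care, and this is where I would expect to spend most of the effort.
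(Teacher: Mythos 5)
Your plan for the central claim---the polarity of $\{\tilde h=0\}$---is circular. To transport a polar set from $X_1$ to $X_2$ along $\tau$ you need $\tau$ to respect capacity (to pull nests back to nests, or equivalently to map polar sets to polar sets), but at this stage $\tau$ is only an a.e.-defined measurable map, and Corollary~\ref{corollary:intertwined forms} cannot supply the missing link: it makes the form norms comparable under $U$, but $\Capty$ is defined through \emph{open} sets of the respective topologies, and nothing proved so far relates the topology of $X_1$ to that of $X_2$ through $\tau$. The statements you would need (that $\tilde\tau$ and $\tilde\tau^{-1}$ map nests to nests) are exactly Theorem~\ref{tau_quasi-homeo} and Proposition~\ref{U_D_loc}, whose proofs \emph{use} the present lemma. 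The paper's proof avoids transporting anything through $\tau$ and works entirely on $X_2$: for $f$ in the dense class $\bigcup_k D(Q_1)_{G_k}\cap L^\infty(X_1,m_1)$ of Lemma~\ref{comp_tau} one has $f\circ\tau\in D(Q_2)$, so $\widetilde{Uf}=\tilde h\cdot\widetilde{f\circ\tau}$ q.e. and in particular $\widetilde{Uf}=0$ q.e. on $\{\tilde h=0\}$; since $U$ maps this class onto a dense subspace of $(D(Q_2),\norm{\cdot}_{Q_2})$, \cite{MR92}, Proposition III.3.5 (q.e. convergence of quasi-continuous versions along a subsequence of a form-convergent sequence) yields that $\tilde g=0$ q.e. on $\{\tilde h=0\}$ for \emph{every} $g\in D(Q_2)$; finally the countable point-separating family of quasi-continuous functions from the definition of quasi-regularity of $Q_2$ cannot separate points of $\{\tilde h=0\}$ outside a polar set, forcing $\{\tilde h=0\}$ to be polar. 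This use of the separation axiom of quasi-regularity is the idea your proposal is missing, and it is precisely how the ``measure to capacity'' passage is accomplished without knowing anything topological about $\tau$.

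Your second step also fails as stated: form convergence of $f_n\circ\tau$ in $D(Q_2)$ does not follow from ``boundedness of $U$ and control of the $H_k$''---the multipliers $H_{k_n}$ vary with $n$ without uniform bounds, and in fact for general $f\in D(Q_1)$ the function $f\circ\tau$ need not belong to $D(Q_2)$ at all (only $h\cdot(f\circ\tau)=Uf$ does); even $L^2(m_2)$-convergence of $f_n\circ\tau$ can fail when $h^{-1}$ is unbounded, since $\tau_\#(h^2m_2)=\beta m_1$ makes $\norm{f_n\circ\tau-f\circ\tau}_{L^2(m_2)}^2$ a weighted $L^2$-norm with weight $h^{-2}\circ\tau^{-1}$. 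Without the quasi-uniformity that form convergence provides, mere q.e. pointwise convergence does not preserve quasi-continuity, so no quasi-continuous version of $f\circ\tau$ is produced this way. The repair is cheap once polarity is established: set $\widetilde{f\circ\tau}:=\widetilde{Uf}/\tilde h$ on $\{\tilde h>0\}$; refining a nest on which $\tilde h$ is continuous by a nest avoiding the polar set $\{\tilde h=0\}$ shows this quotient is quasi-continuous, it is a version of $f\circ\tau$ because $h>0$ a.e., and then the q.e. identity $\widetilde{Uf}=\tilde h\cdot\widetilde{f\circ\tau}$ follows from \cite{FOT94}, Lemma 2.1.4, exactly as in your (correct) final paragraph.
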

\begin{proof}

Let $(G_k)$ be a nest as in Lemma \ref{comp_tau} and let $f\in
D(Q_1)_{G_k}\cap L^\infty(X_1,m_1)$ for some $k\in\IN$. Then
$f\circ\tau\in D(Q_2)$, hence it has a quasi-continuous version
$\widetilde{f\circ\tau}$. Since both $\widetilde{Uf}$ and $\tilde
h\cdot\widetilde{f\circ\tau}$ are versions of $Uf$, they coincide
q.e. (see \cite{FOT94}, Lemma~2.1.4). In particular,
$\widetilde{Uf}=0$ q.e. on $\{\tilde h=0\}$.

Since $(G_k)$ is a nest, $\bigcup_k D(Q_1)_{G_k}$ is dense in
$D(Q_1)$. By the cut-off property of Dirichlet forms, $(g\wedge
k)\vee (-k)\to g$ w.r.t $\norm\cdot_{Q_1}$ for all $g\in D(Q_1)$.
Thus $\bigcup_k D(Q_1)_{G_k}\cap L^\infty(X_1,m_1)$ is also dense in
$D(Q_1)$. Hence for every $g\in D(Q_2)$ there is a sequence $(f_n)$
in $\bigcup_k D(Q_1)_{G_k}\cap L^\infty(X_1,m_1)$ such that $Uf_n\to
g$ w.r.t. $\norm\cdot_{Q_2}$.

By \cite{MR92}, Proposition III.3.5 there is a subsequence
$(f_{n_j})$ such that $\widetilde{Uf_{n_j}}\to \tilde g$ q.e. Thus
$\tilde g=0$ q.e. on $\{\tilde h=0\}$ for all $g\in D(Q_2)$. Since
$Q_2$ is quasi-regular, there is a countable collection $\{g_n \mid
n\in \IN\} \subset D(Q_2)$ and a polar set $N \subset X_2$ such that
quasi-continuous versions $\{\tilde g_n \mid n \in \IN\}$ separate
the points of $X \setminus N$. Moreover, by the subadditivity of the
capacity there is another polar set $N'$ such that $\tilde g_n = 0$
on $\{\tilde h = 0\} \setminus N'$ for all $n \in \IN$. In
particular, $\{\tilde g_n \mid n \in \IN\}$ does not separate the
points of $\{\tilde h = 0\} \setminus N'$ and so $\{\tilde h = 0\} $
must be polar.
\end{proof}

In order to prove the main theorem of this section, we need to
recall the following regularity property for nests. A closed set $F
\subset X$ is called {\em  regular} if its measure theoretic support
satisfies ${\rm supp}(\1_F m) = F$. A nest of closed sets $(F_k)$ is
{\em regular} if for all $k \in \IN$ the set $F_k$ is regular. The
main merit of working with regular nests is that  for such nests the
concepts of quasi-everywhere and almost-everywhere are compatible in
the following sense: For a regular nest $(F_k)$ a function $f \in
C(\{F_k\})$ satisfies $f \geq 0$ a.e. if and only if $f(x) \geq 0$
for all $x \in \bigcup_k F_k$, see \cite{FOT94}, Theorem~2.1.2. Note
that for any nest of closed sets $(F_k)$ the sequence $({\rm
supp}(\1_{F_k} m))$ is a regular nest, see  \cite{FOT94},
Lemma~2.1.3. Thus, for most purposes nests of closed sets can be
assumed to be regular.

We begin with a lemma, which is a variant of \cite{FOT94}, Theorem 2.1.2.
\begin{lemma}\label{unique_quasi_homeo}
Let $Q_i$ be quasi-regular Dirichlet forms on
$L^2(X_i,m_i)$, $i \in \{1,2\}$. If $\Phi,\Phi'\colon X_2\lra X_1$ are quasi-homeomorphisms such that $\Phi=\Phi'$ a.e., then $\Phi=\Phi'$ q.e.
\end{lemma}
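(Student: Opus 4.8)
The plan is to reduce the comparison of the two maps $\Phi$ and $\Phi'$ to a scalar statement to which the compatibility between almost-everywhere and quasi-everywhere on regular nests (\cite{FOT94}, Theorem~2.1.2, as recalled above) can be applied directly.

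First I would produce a single regular nest in $X_2$ on which \emph{both} $\Phi$ and $\Phi'$ are continuous. By the definition of quasi-homeomorphism there are nests of closed sets $(A_k)$ in $X_2$ and $(B_k)$ in $X_1$ with $\Phi\colon A_k\to B_k$ a homeomorphism, and likewise nests $(A_k')$, $(B_k')$ with $\Phi'\colon A_k'\to B_k'$ a homeomorphism. The refinement $W_k=A_k\cap A_k'$ is again a nest of closed sets, and both $\Phi$ and $\Phi'$ restrict to continuous maps on each $W_k$. Passing to the supports $F_k=\supp(\1_{W_k}m_2)$ yields a \emph{regular} nest $(F_k)$ with $F_k\subset W_k$ (\cite{FOT94}, Lemma~2.1.3), so that $\Phi|_{F_k}$ and $\Phi'|_{F_k}$ remain continuous.

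The key idea is then to package the two maps into one scalar function. Fix a metric $d_1$ inducing the topology of the Polish space $X_1$ and set $\rho(x)=d_1(\Phi(x),\Phi'(x))$. Since $\Phi|_{F_k}$ and $\Phi'|_{F_k}$ are continuous and $d_1$ is continuous on $X_1\times X_1$, the function $\rho$ lies in $C(\{F_k\})$. By hypothesis $\Phi=\Phi'$ a.e., hence $\rho=0$ a.e.; in particular $-\rho\geq 0$ a.e. Applying the regular-nest compatibility to the regular nest $(F_k)$ and the function $-\rho$ gives $-\rho(x)\geq 0$, that is $\rho(x)\leq 0$, for every $x\in\bigcup_k F_k$. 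As $\rho\geq 0$ always, we conclude $\rho(x)=0$, i.e. $\Phi(x)=\Phi'(x)$, for all $x\in\bigcup_k F_k$. Since $(F_k)$ is a nest, $X_2\setminus\bigcup_k F_k=\bigcap_k F_k^c$ is polar, and therefore $\Phi=\Phi'$ q.e.

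I expect the only genuine point of care to be the construction of the common regular nest: one must refine the two nests coming from $\Phi$ and $\Phi'$, check that continuity (not merely the homeomorphism property) survives the restriction, and invoke \cite{FOT94}, Lemma~2.1.3 to regularize. Once both maps are continuous on one regular nest, the metric trick collapses everything to a single application of the a.e.-to-q.e. principle, and no use of the separating family coming from quasi-regularity of $Q_1$ is needed.
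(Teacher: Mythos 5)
Your proof is correct and follows essentially the same route as the paper's: refine the two nests, regularize via measure-theoretic supports (\cite{FOT94}, Lemma~2.1.3), and apply the a.e.-to-pointwise compatibility for continuous functions on a regular nest, with your scalar function $\rho = d_1(\Phi,\Phi')$ merely making explicit the reduction that the paper's proof leaves implicit. A small bonus of your ordering: by intersecting the two nests \emph{before} passing to supports you sidestep the fact that an intersection of two regular sets need not itself be regular, a point the paper's own proof (which regularizes each nest first and then intersects) glosses over.
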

\begin{proof}
Let $(G_k)$, $(G_k')$ be nests such that $\Phi|_{G_k}$ and $\Phi'|_{G_k'}$ are continuous. Otherwise restricting to $\supp(\1_{G_k}m_2)$ and $\supp(\1_{G_k'}m_2)$ respectively, we can assume that the nests are regular. Then $(G_k\cap G_k')$ is also a regular nest, and thus $\Phi=\Phi'$ a.e. implies $\Phi|_{G_k\cap G_k'}=\Phi'|_{G_k\cap G_k'}$. Therefore, $\Phi=\Phi'$ q.e.
\end{proof}

\begin{theorem}[Regularity of the transformation]\label{tau_quasi-homeo}
Let $Q_i$ be irreducible quasi-regular Dirichlet forms on
$L^2(X_i,m_i)$, $i \in \{1,2\}$, and let $U:L^2(X_1,m_1)
\longrightarrow  L^2(X_2,m_2)$ be an order isomorphism intertwining
the associated semigroups. Then the transformation associated with
$U$ has a version $\tilde \tau$ that is a quasi-homeomorphism. Up to
equality q.e. this version is unique.
\end{theorem}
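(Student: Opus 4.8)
The plan is to reconstruct $\tau$ as a genuine point map out of the quasi-continuous objects produced in Lemma~\ref{h=0_polar}, and then to read off the quasi-homeomorphism property from compactness of the sets occurring in a quasi-regular nest. First I would fix countable families $\{g_n\}\subset D(Q_1)$ and $\{g_n'\}\subset D(Q_2)$ of bounded functions whose quasi-continuous versions separate the points of $X_1\setminus N_1$ and $X_2\setminus N_2$ respectively, with $N_1,N_2$ polar; these exist by quasi-regularity. Applying Lemma~\ref{h=0_polar} to $U$ and to $U^{-1}$ (whose transformation is a version of $\tau^{-1}$) yields quasi-continuous versions $\widetilde{g_n\circ\tau}$ on $X_2$ and $\widetilde{g_n'\circ\tau^{-1}}$ on $X_1$, together with q.e.\ positivity of the two scalings. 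Since $g_n=\tilde g_n$ $m_1$-a.e.\ and $\tau_\# m_2$ and $m_1$ are mutually absolutely continuous (Lemma~\ref{adjoint_U}), one gets $\widetilde{g_n\circ\tau}(y)=\tilde g_n(\tau(y))$ for a.e.\ $y$, and symmetrically $\tilde g_n'(y)=\widetilde{g_n'\circ\tau^{-1}}(\tau(y))$ for a.e.\ $y$.

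Next I would define the candidate map. Let $\Psi=(\widetilde{g_n\circ\tau})_n\colon X_2\lra\IR^\IN$ and $\iota=(\tilde g_n)_n\colon X_1\lra\IR^\IN$; by the choice of the $g_n$ the map $\iota$ is injective off $N_1$. On the set where $\Psi(y)\in\iota(X_1\setminus N_1)$ I set $\tilde\tau(y)=\iota^{-1}(\Psi(y))$, the unique point of $X_1\setminus N_1$ with coordinates $\Psi(y)$. The a.e.\ identities above give $\tilde\tau=\tau$ a.e., and the symmetric family forces injectivity q.e.: if $\tilde\tau(y)=\tilde\tau(y')$ then $\tilde g_n'(y)=\widetilde{g_n'\circ\tau^{-1}}(\tilde\tau(y))=\tilde g_n'(y')$ for all $n$, so $y=y'$. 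To obtain the nest sets I would take a regular nest $(E_k)$ of compact sets in $X_1$ (available by quasi-regularity) on which every $\tilde g_n$ is continuous and which avoids $N_1$; then $\iota|_{E_k}$ is a continuous injection of a compact set, hence a homeomorphism onto the compact set $\iota(E_k)$. Likewise I would take a regular nest $(F_k)$ of compact sets in $X_2$ on which $\Psi$ is continuous. Putting $G_k=F_k\cap\{y:\Psi(y)\in\iota(E_k)\}$, the restriction $\tilde\tau|_{G_k}=\iota^{-1}\circ\Psi|_{G_k}$ is continuous, $G_k$ is compact, and $\tilde\tau|_{G_k}$ is an injective continuous map from a compact space into the Hausdorff space $X_1$, hence a homeomorphism onto its image $\tilde\tau(G_k)=E_k\cap\tilde\tau(F_k)$.

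The main obstacle is to show that $(G_k)$ and $(\tilde\tau(G_k))$ are genuinely nests, i.e.\ to upgrade the a.e.\ identities to statements about capacity; the delicate point is that $\tilde\tau(y)\in\bigcup_k E_k$ only q.e., and the bad set must be shown polar. Concretely one must know that $\tilde\tau^{-1}(E_k)$ is a nest in $X_2$ and that $\tilde\tau(F_k)$ is a nest in $X_1$; granting this, $G_k=F_k\cap\tilde\tau^{-1}(E_k)$ is a refinement of nests and hence a nest, and $\tilde\tau(G_k)=E_k\cap\tilde\tau(F_k)$ is likewise a refinement of nests. I would derive this transfer of nests from the fact that $U$ scales the form norm, $\|Uf\|_{Q_2}=\|U\|\,\|f\|_{Q_1}$ (Corollaries~\ref{transformation_measures} and~\ref{corollary:intertwined forms}), combined with the identity $\widetilde{Uf}=\tilde h\,\widetilde{f\circ\tau}$ and the q.e.\ positivity of $\tilde h$. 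On any nest on which $\tilde h$ is bounded away from $0$ and $\infty$, these render the capacities $\Capty_1$ and $\Capty_2$ comparable along $\tau$, so that $\tau$ and $\tau^{-1}$ carry nests to nests (this is exactly the mechanism underlying Lemma~\ref{images_quasi-homeo}). Combining this with the compact nests above shows that $\tilde\tau$ is a quasi-homeomorphism.

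Finally, uniqueness up to equality q.e.\ is immediate: any two such versions agree a.e.\ with $\tau$, hence with one another, and Lemma~\ref{unique_quasi_homeo} promotes this a.e.\ agreement of quasi-homeomorphisms to agreement q.e.
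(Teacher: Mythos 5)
Your construction of $\tilde\tau$ is a workable alternative to the paper's: instead of reducing to the regular case (via the quasi-homeomorphism with a regular form) and invoking Gelfand--Naimark on $C_0(X_1)$, you embed $X_1$ into $\IR^\IN$ via a countable separating family and define $\tilde\tau=\iota^{-1}\circ\Psi$; this part is sound, as are the definitions $G_k=F_k\cap\Psi^{-1}(\iota(E_k))$ and the identification $\tilde\tau(G_k)=E_k\cap\tilde\tau(F_k)$. You also correctly isolate the crux, namely proving that $(G_k)$ and $(\tilde\tau(G_k))$ are nests --- but precisely there the proposal has a genuine gap. The mechanism you offer, that on a nest where $\tilde h$ is bounded away from $0$ and $\infty$ the capacities become ``comparable along $\tau$'', and that this is ``the mechanism underlying Lemma~\ref{images_quasi-homeo}'', does not work as stated. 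Lemma~\ref{images_quasi-homeo} takes as a \emph{hypothesis} a quasi-homeomorphism that maps nests to nests, which is exactly what is to be proven here, so invoking it is circular. A direct comparison of $\Capty_1$ and $\Capty_2$ along $\tau$ also faces two concrete obstructions: the capacities are defined through \emph{open} sets, while $\tau^{-1}(O)$ for $O$ open is at best quasi-open at this stage; and the reference functions $\psi_1,\psi_2$ in $\Capty_{\psi_i}$ do not correspond under $U$. Since an $m_2$-null set need not be polar, no amount of a.e.\ information bridges this, and in particular the bad set where $\Psi(y)\notin\iota(X_1\setminus N_1)$ is not shown to be polar by anything you have written.

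What closes the gap --- and is the actual heart of the paper's proof --- is the reference-free characterization of general (not necessarily closed) nests in Lemma~\ref{lemma:characterization nests in terms of capacity}: $(A_k)$ is a nest if and only if $\bigcup_k D(Q)_{A_k}$ is dense in $(D(Q),\norm{\cdot}_Q)$. Combined with the support correspondence $UD(Q_1)_{\tilde\tau(G)}=D(Q_2)_{G}$ for compact $G$ (a consequence of Corollary~\ref{corollary:intertwined forms}, Corollary~\ref{transformation_measures} and $\tilde\tau=\tau$ a.e.), this yields the transfer in the right order: $(F_k)$ is a nest, hence $\bigcup_k D(Q_2)_{F_k}$ is dense, hence (applying $U^{-1}$, which scales the form norm) $\bigcup_k D(Q_1)_{\tilde\tau(F_k)}$ is dense, hence $(\tilde\tau(F_k))$ is a nest; then $\tilde\tau(G_k)=E_k\cap\tilde\tau(F_k)$ is a nest as a refinement of two nests, and pushing back with $U$ shows that $(G_k)$ is a nest. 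With this replacement your skeleton goes through. Note also that your q.e.\ injectivity step, $\tilde g_n'(y)=\widetilde{g_n'\circ\tau^{-1}}(\tilde\tau(y))$, holds a priori only a.e.\ and needs the same regular-nest upgrade (continuity on a regular nest together with a.e.\ equality implies equality on the nest, \cite{FOT94}, Theorem~2.1.2) that the paper uses when it proves $\widetilde{\tau^{-1}}\circ\tilde\tau=\id$ on $G_k$; you should make the refinement to a \emph{regular} nest explicit at that point.
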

\begin{proof}
Since every quasi-regular Dirichlet form is quasi-homeomorphic to a
regular Dirichlet form and compositions of quasi-homeomorphisms are
quasi-ho\-meo\-mor\-phisms, it suffices to prove the assertion in
the regular case.

Since $X_1$ is assumed to be metrizable and separable and $Q_1$ is
regular, there is a countable dense subalgebra  $D\subset
C_0(X_1)\cap D(Q_1)$ that is uniformly dense in $C_0(X_1)$. Since
$UD\subset D(Q_2)$ and $Q_2$ is regular, there is a regular nest
$(F_k)$ of closed subsets of $X_2$ such that for every $f\in D$
there is a version $\widetilde{Uf}$ of $Uf$ such that
$\widetilde{Uf}\in C(\{F_k\})$, and $h$ has a version $\tilde h$
such that $\tilde h\in C(\{F_k\})$, see \cite{FOT94}, Theorem~2.1.2.
Since $\{\tilde h=0\}$ is polar by Lemma \ref{h=0_polar}, we may
additionally assume that $\tilde h > 0$ on $\bigcup_k F_k$. Refining
by a regular nest of compact sets, we can moreover assume the
$(F_k)$ to be compact.

Since $\abs{Uf}\leq h \norm{f}_\infty$ and $(Uf)(Ug)=h U(fg)$ a.e.
for all $f,g\in D$, the regularity of the nest $(F_k)$ implies
$\abs{\widetilde{Uf}}\leq \tilde{h} \norm{f}_\infty$ and
$(\widetilde{Uf})(\widetilde{Ug})=\tilde h \widetilde{U(fg)}$ on
$\bigcup_k F_k$ for all $f,g\in D$. Thus, for $y\in F_k$ the map
\begin{align*}
D\lra \IR,\,f\mapsto \frac 1{\tilde h(y)}\widetilde{Uf}(y)
\end{align*}
extends continuously to a multiplicative linear map $\chi_y$ on
$C_0(X_1)$. By Gelfand-Naimark theory, there exists a unique
$\tilde\tau(y)\in X_1$ such that $\chi_y(f)=f(\tilde\tau(y))$ for
all $f\in C_0(X_1)$.

We prove that $\tilde\tau$ is continuous on $F_k$. For if not, there
exists a $y\in F_k$, a neighborhood $V$ of $\tilde\tau(y)$ and a
sequence $(y_n)$ such that $y_n\to y$ and $\tilde\tau(y_n)\notin V$.
Let $f\in D$ with $f(\tilde\tau(y)) > 0$ and $\supp f\subset V$.
Then
\begin{align*}
\tilde h(y)
f(\tilde\tau(y))=\widetilde{Uf}(y)=\lim_{n\to\infty}\widetilde{Uf}(y_n)=\tilde
h(y_n) f(\tilde\tau(y_n))=0,
\end{align*}
a contradiction to $\tilde h(y) > 0$.

It is easy to see that the construction is consistent for different
$k$ so that $\widetilde{Uf}=\tilde h\cdot (f\circ\tilde \tau)$ on
$\bigcup_k F_k$ for all $f\in D$. Since $m_2(X \setminus \bigcup_k
F_k) = 0$ and $\tilde h > 0$ on $\bigcup_k F_k$,  we obtain $f \circ
\tilde \tau = f \circ \tau$ a.e. for all $f \in D$ and so $\tilde
\tau = \tau$ a.e.

Since $F_k$ is compact and $\tilde \tau$ is continuous on $F_k$, the
image $\tilde\tau(F_k)$ is compact. The measure $\tilde \tau_\# m_2
=  \tau_\# m_2$ is equivalent to $m_1$ and so $\tilde\tau(F_k)$ is
regular.  We prove that $(\tilde\tau(F_k))$ is a nest by showing $U
D(Q_1)_{\tilde\tau(F_k)}= D(Q_2)_{F_k}$. Since both  $F_k$ and
$\tilde\tau(F_k)$ are closed, Corollary~\ref{corollary:intertwined
forms} implies that it suffices to show  $(Uf) \1_{X_2 \setminus
F_k} = 0$ a.e. if and only if $f \1_{X_1 \setminus \tilde \tau
(F_k)} = 0$ a.e. This however is a consequence of
Corollary~\ref{transformation_measures} and the fact that $\tau =
\tilde \tau$ a.e.

The transformation associated with $U^{-1}$ is given by $\tau^{-1}$.
An application of the above arguments to $U^{-1}$ yields a regular
nest of compact sets $(F'_k)$ in $X_1$ and an $m_1$-version
$\widetilde{\tau^{-1}}$ of $\tau^{-1}$ that is continuous on $F'_k$
for all $k \in \IN$.

We let $G_k : =  F_k \cap \tilde \tau^{-1}(F'_k) = F_k \cap \tilde
\tau^{-1}(F'_k \cap \tilde \tau(F_k))$. Since $\tilde \tau$ is
continuous on $F_k$ and $F_k$ is compact, $G_k$ is compact.
Moreover, as a refinement of two regular nests $(\tilde \tau (G_k))
= (F'_k \cap \tilde \tau (F_k))$ is a regular nest.  With the same
arguments as for proving that $(\tilde \tau(F_k))$ is a regular
nest, it follows that $G_k$ is a regular nest.

Next we prove that  $\tilde \tau|_{G_k}$ is injective. We chose
$G_k$ such that the restriction of the composition
$\widetilde{\tau^{-1}} \circ \tilde \tau|_{G_k}$ is continuous.
Moreover, since $\tilde \tau$ is an $m_2$-version of $\tau$,
$\widetilde{\tau^{-1}}$ is an $m_1$-version of $\tau^{-1}$ and since
$\tau^{-1}_\# m_1$ and $m_2$ are equivalent, we have $
\widetilde{\tau^{-1}} \circ \tilde \tau = \tau^{-1} \circ \tau =
\id_{X_2}$ a.e. Thus, the regularity of $G_k$ and the continuity of
$\widetilde{\tau^{-1}} \circ \tilde \tau$ implies
$\widetilde{\tau^{-1}} \circ \tilde \tau|_{G_k} = \id_{X_2}$ proving
the injectivity of $\tilde \tau |_{G_k}$.

Since $\tilde \tau|_{G_k}$ is continuous, $G_k$ is compact and
$\tilde \tau|_{G_k} :G_k \to \tilde \tau(G_k)$ is bijective, it
follows that $\tilde \tau|_{G_k} :G_k \to \tilde \tau(G_k)$ is a
homeomorphism. As we have seen above $(G_k)$ and $(\tilde
\tau(G_k))$ are nests and so $\tilde{\tau}$ is a
quasi-homeomorphism.

Since the transformation $\tau$ is determined up to equality a.e.,
the q.e. uniqueness of $\tilde \tau$ follows from Lemma
\ref{unique_quasi_homeo}.
\end{proof}

In the situation of the previous theorem, we call $\tilde \tau$ simply the \emph{quasi-homeo\-morphism associated with $U$}. As result of the theorem, it is uniquely determined up to equality q.e.

Our final aim in this section it to show that intertwining  order
isomorphism preserve not only the form domain but also the local
spaces.  We will need the following technical feature of $\tilde
\tau, \tilde \tau^{-1}$.

\begin{proposition}\label{U_D_loc} Let $Q_i$ be irreducible quasi-regular Dirichlet forms on
$L^2(X_i,m_i)$, $i \in \{1,2\}$, and let $U:L^2(X_1,m_1)
\longrightarrow  L^2(X_2,m_2)$ an order isomorphism intertwining the
associated semigroups and let $\tilde \tau$ be the quasi-homeomorphism associated with $U$. Then $\tilde \tau, \tilde \tau^{-1}$
map nests to nests. Moreover, if the elements of the nest are
quasi-open, then so are the elements of its image under $\tilde
\tau, \tilde \tau^{-1}$.
\end{proposition}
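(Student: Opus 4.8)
The plan is to reduce the nest-mapping statement to the already-established density characterization of nests (Lemma \ref{lemma:characterization nests in terms of capacity}) together with the isometry-up-to-constant relation of Corollary \ref{corollary:intertwined forms}. By symmetry (replacing $U$ with $U^{-1}$ and $\tilde\tau$ with $\tilde\tau^{-1}$), it suffices to treat $\tilde\tau$. So let $(N_k)$ be a nest in $X_2$; I want to show $(\tilde\tau(N_k))$ is a nest in $X_1$. Since Lemma \ref{lemma:characterization nests in terms of capacity} identifies nests via density of $\bigcup_k D(Q)_{G_k}$ in $(D(Q),\|\cdot\|_Q)$, the natural strategy is to transport this density along $U$. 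The key observation is that $U$ intertwines the measure-theoretic support condition defining $D(Q)_G$: because $\tilde\tau$ is a quasi-homeomorphism with $\tilde\tau_\# m_2$ equivalent to $m_1$ (Corollary \ref{transformation_measures}), a function $f$ on $X_1$ is supported in $\tilde\tau(N_k)$ (up to null sets) if and only if $f\circ\tilde\tau$ is supported in $N_k$, and via $\widetilde{Uf}=\tilde h\cdot\widetilde{f\circ\tilde\tau}$ (Lemma \ref{h=0_polar}), this matches the support behavior of $Uf$ on $X_2$.

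First I would make precise the correspondence $U D(Q_1)_{\tilde\tau(G)} = D(Q_2)_G$ for closed $G$, exactly as was done for the sets $F_k$ inside the proof of Theorem \ref{tau_quasi-homeo}: since $\tilde\tau$ is a homeomorphism on the relevant regular nest pieces and the pushforward measure is equivalent, the condition $(Uf)\1_{G^c}=0$ a.e.\ is equivalent to $f\1_{\tilde\tau(G)^c}=0$ a.e. Given a nest $(N_k)$, I may first refine it by a regular nest of closed sets (using the remark that for any nest $(G_k)$ there is a nest of closed sets $F_k\subset G_k$, and passing to measure-theoretic supports as in the discussion before Lemma \ref{unique_quasi_homeo}), so without loss of generality the $N_k$ are closed and regular, and their images $\tilde\tau(N_k)$ are then closed (indeed compact, after intersecting with the compact nest pieces from Theorem \ref{tau_quasi-homeo}) and regular since $\tilde\tau_\# m_2$ is equivalent to $m_1$. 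Then $U$ carries $\bigcup_k D(Q_1)_{\tilde\tau(N_k)}$ onto $\bigcup_k D(Q_2)_{N_k}$, the latter being dense in $D(Q_2)$ because $(N_k)$ is a nest. Since $U$ is a topological isomorphism of $(D(Q_1),\|\cdot\|_{Q_1})$ onto $(D(Q_2),\|\cdot\|_{Q_2})$ by Corollary \ref{corollary:intertwined forms}, it maps dense sets to dense sets, so $\bigcup_k D(Q_1)_{\tilde\tau(N_k)}$ is dense in $D(Q_1)$, and Lemma \ref{lemma:characterization nests in terms of capacity} gives that $(\tilde\tau(N_k))$ is a nest.

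For the quasi-open refinement statement, I would invoke Lemma \ref{images_quasi-homeo}: $\tilde\tau$ is a quasi-homeomorphism that maps nests to nests (just proved), hence it maps quasi-open sets to quasi-open sets. Thus if the elements $N_k$ of the original nest are quasi-open, so are the $\tilde\tau(N_k)$, and $(\tilde\tau(N_k))$ is a nest of quasi-open sets. The symmetric argument for $\tilde\tau^{-1}$, which is the quasi-homeomorphism associated with $U^{-1}$, completes the proof. I expect the main obstacle to be the careful bookkeeping needed to guarantee that $\tilde\tau(N_k)$ is genuinely closed/regular (so that the support-correspondence $U D(Q_1)_{\tilde\tau(N_k)}=D(Q_2)_{N_k}$ is literally valid rather than only up to polar sets); this is resolved by the regularization-of-nests device and the measure-equivalence $\tilde\tau_\# m_2 \sim m_1$, both of which are available from the preceding material.
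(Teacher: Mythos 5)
Your proposal is correct and follows essentially the same route as the paper's proof: refine the given nest by the closed nest on which $\tilde\tau$ is a homeomorphism, transfer the support condition via the identity $UD(Q_1)_{\tilde\tau(\cdot)}=D(Q_2)_{(\cdot)}$ from the proof of Theorem \ref{tau_quasi-homeo}, conclude with the density characterization of nests (Lemma \ref{lemma:characterization nests in terms of capacity}) and Corollary \ref{corollary:intertwined forms}, and settle the quasi-open case by Lemma \ref{images_quasi-homeo}. The only cosmetic difference is your extra regularization of the nests, which the paper avoids since closedness of the refined sets (guaranteed by intersecting with the homeomorphism nest) already suffices for the support correspondence.
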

\begin{proof}  Let $(F_k)$ be a nest of closed subsets of $X_1$. By definition
there are nests $(A_k)$, $(B_k)$ of closed subsets of $X_1$, $X_2$
such that $\tilde\tau\colon B_k\lra A_k$ is a homeomorphism. Then
$(A_k\cap F_k)$ is a nest in $X_1$, $\tilde\tau^{-1}(A_k\cap F_k)$
is closed in $X_2$ and $D(Q_2)_{\tilde\tau^{-1}(A_k\cap F_k)} =
UD(Q_1)_{A_k \cap F_k}$ (cf. proof of
Theorem~\ref{tau_quasi-homeo}). Hence $(\tilde\tau^{-1}(A_k\cap
F_k))$ is a nest in $X_2$ and so is $(\tilde\tau^{-1}(F_k))$. If
$(G_k)$ is a nest of not necessarily closed subsets of $X_1$, there
is a nest $(F_k)$ of closed sets such that $F_k\subset G_k$ by
definition. Thus $\tilde\tau^{-1}$ maps nests to nests (and of
course the same holds for $\tilde\tau$).

The last statement follows from the already shown part and Lemma
\ref{images_quasi-homeo}.
\end{proof}

The following lemma extends Corollary~\ref{corollary:intertwined
forms}.

\begin{lemma}[Order isomorphisms preserve local space]
\label{lemma:tau leaves local space invariant}
Let $Q_i$ be irreducible quasi-regular Dirichlet forms on
$L^2(X_i,m_i)$, $i \in \{1,2\}$, and let $U:L^2(X_1,m_1)
\longrightarrow  L^2(X_2,m_2)$ an order isomorphism intertwining the
associated semigroups. Denote by $h$ and $\tilde\tau$ the associated
scaling and quasi-homeomorphism. Let  $f\in
D(Q_1)_\loc^\bullet$ be arbitrary.  Then, both   $h\cdot
(f\circ\tau)$ and $ f\circ \tau $ belong to $D(Q_2)_\loc^\bullet$.
Moreover, $f$ and $f \circ \tau$ have quasi-continuous versions that
are related by $ \widetilde{f \circ \tau} = \tilde f \circ \tilde
\tau$ quasi everywhere.
\end{lemma}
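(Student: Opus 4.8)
The plan is to reduce everything to the algebraic identity $f\circ\tau = U(H_k f_k)$ on a suitable quasi-open nest in $X_2$, where the $f_k$ are \emph{bounded} local representatives of $f$ and the $H_k$ are the bounded local representatives of the scaling $H$ of $U^{-1}$ supplied by Lemma~\ref{comp_tau} (recall $H\circ\tau = 1/h$). The heart of the matter is arranging for the local representatives of $f$ to be bounded; once this is done, the relevant products stay in $D(Q_1)$ and the multiplicativity of $U$ on bounded functions does the rest.

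First I would fix a nest $(G^0_k)$ of quasi-open sets in $X_1$ and functions $f^0_k\in D(Q_1)$ with $f=f^0_k$ a.e.\ on $G^0_k$, witnessing $f\in D(Q_1)^\bullet_\loc$. By Lemma~\ref{approx_D_loc} the function $f$ has a quasi-continuous version $\tilde f$, which is finite q.e., so the quasi-open sets $B_k=\{\abs{\tilde f}<k\}$ (quasi-open by Lemma~\ref{preimage_quasi-open}) form an ascending sequence; using a nest of compact sets on which $\tilde f$ is continuous, hence bounded, one checks $\Capty(B_k^c)\to 0$, so $(B_k)$ is a nest. Setting $G_k=G^0_k\cap B_k$ and $f_k=(f^0_k\wedge k)\vee(-k)$ gives a nest $(G_k)$ of quasi-open sets with bounded representatives $f_k\in D(Q_1)\cap L^\infty(X_1,m_1)$ satisfying $f=f_k$ a.e.\ on $G_k$, since $\abs{\tilde f}<k$ q.e.\ on $G_k$ forces $\abs{f}\le k$ a.e.\ there. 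This truncation step, turning arbitrary local representatives into bounded ones, is where the quasi-continuity from Lemma~\ref{approx_D_loc} is essential, and it is the step I expect to be the main obstacle.

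With $(\hat G_k)$ and $H_k\in D(Q_1)\cap L^\infty$ the nest and representatives of $H$ from Lemma~\ref{comp_tau}, I would pass to the refinement $(G_k\cap\hat G_k)$, again a nest of quasi-open sets, and push it forward through $\tilde\tau^{-1}$; by Proposition~\ref{U_D_loc} the sets $N_k:=\tilde\tau^{-1}(G_k\cap\hat G_k)$ form a nest of quasi-open sets in $X_2$. Since $\tilde\tau=\tau$ a.e.\ and $\tau_\# m_2\ll m_1$, on $N_k$ one has $f\circ\tau=f_k\circ\tau$ and $1/h=H_k\circ\tau$ a.e. Hence on $N_k$ one gets $h\cdot(f\circ\tau)=h\cdot(f_k\circ\tau)=Uf_k\in D(Q_2)$, which already yields $h\cdot(f\circ\tau)\in D(Q_2)^\bullet_\loc$. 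For the second membership I compute, on $N_k$, that $f\circ\tau=(H_k\circ\tau)\cdot Uf_k=\frac{(UH_k)(Uf_k)}{h}=U(H_k f_k)$, using $UH_k=h\cdot(H_k\circ\tau)$ and the purely algebraic identity $(UH_k)(Uf_k)=h\cdot U(H_k f_k)$, which is valid because $D(Q_1)\cap L^\infty$ is an algebra (so $H_k f_k\in D(Q_1)\cap L^\infty$ and $U(H_k f_k)\in D(Q_2)$ by Corollary~\ref{corollary:intertwined forms}). Thus $f\circ\tau\in D(Q_2)^\bullet_\loc$.

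It remains to identify the quasi-continuous versions. The function $\tilde f\circ\tilde\tau$ is itself quasi-continuous: taking a nest on which $\tilde f$ is continuous and pulling it back through the quasi-homeomorphism $\tilde\tau$ (using that $\tilde\tau^{-1}$ maps nests to nests, Proposition~\ref{U_D_loc}), its restriction to a refining nest is continuous. On the other hand $f\circ\tau$, now known to lie in $D(Q_2)^\bullet_\loc$, has a quasi-continuous version $\widetilde{f\circ\tau}$ by Lemma~\ref{approx_D_loc}. Finally $\tilde f\circ\tilde\tau=f\circ\tau$ a.e.: indeed $\tilde\tau=\tau$ a.e.\ gives $\tilde f\circ\tilde\tau=\tilde f\circ\tau$ a.e., while $\tilde f=f$ a.e.\ on $X_1$ together with $\tau_\# m_2\ll m_1$ gives $\tilde f\circ\tau=f\circ\tau$ a.e. Two quasi-continuous functions agreeing almost everywhere agree quasi everywhere, so $\widetilde{f\circ\tau}=\tilde f\circ\tilde\tau$ q.e., which completes the argument.
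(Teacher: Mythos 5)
Your proof is correct, and its skeleton coincides with the paper's: both arguments hinge on the scaling $H$ of $U^{-1}$ with $H\circ\tau=1/h$, on pushing quasi-open nests through $\tilde\tau^{-1}$ via Proposition~\ref{U_D_loc}, and on the identical endgame for the quasi-continuous versions (two quasi-continuous functions agreeing a.e.\ agree q.e.). Where you diverge is in how the unboundedness of $f$ is handled. The paper first proves $h\cdot(f\circ\tau)\in D(Q_2)^\bullet_\loc$ for \emph{arbitrary} local representatives $f_n$ --- no truncation is needed there, since $Uf_n=h\cdot(f_n\circ\tau)$ holds for every $f_n\in L^2$ by Proposition~\ref{prop-Lamperti} --- and then disposes of $f\circ\tau$ in one stroke by writing $f\circ\tau=h\cdot\bigl((Hf)\circ\tau\bigr)$ and invoking the fact that $D(Q_1)^\bullet_\loc$ is an algebra (Kuwae, Theorem~4.1, together with \cite{FOT94}, Theorem~1.4.2), so that $Hf\in D(Q_1)^\bullet_\loc$ and the first claim applies to it. You instead stay entirely at the level of representatives: you build the truncation nest $B_k=\{\abs{\tilde f}<k\}$ by hand (soundly --- quasi-openness from Lemma~\ref{preimage_quasi-open}, and the capacity estimate via a compact nest on which $\tilde f$ is continuous, hence bounded) and then run the multiplicativity of $U$ in the bounded algebra $D(Q_1)\cap L^\infty(X_1,m_1)$, which is in effect the proof of Lemma~\ref{comp_tau} upgraded by your truncation --- exactly the ``improvement'' the paper alludes to, but realized concretely rather than through the abstract algebra property of the local space. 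The trade-off: your version is more self-contained, re-proving the local-boundedness ingredient that Kuwae's theorem supplies for free, at the cost of an extra nest construction; note also that the truncation is only needed for the product $H_kf_k$, not for the membership $h\cdot(f\circ\tau)\in D(Q_2)^\bullet_\loc$, which (as in the paper) holds with unbounded representatives.
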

\begin{proof}  We first show  $h\cdot (f\circ\tau) \in D(Q_2)_\loc^\bullet$ for
$f\in D(Q_1)_\loc^\bullet$.

Let now $(G_n)_{n\in\IN}$ be a nest of quasi-open sets and $(f_n)$ a
sequence in $D(Q_1)$ such that $f_n|_{G_n}=f|_{G_n}$ a.e.  Then
\begin{align*}
h\cdot (f\circ\tau)|_{\tilde\tau^{-1}(G_n)}=h\cdot
(f_n\circ\tau)|_{\tilde\tau^{-1}(G_n)} =
Uf_n|_{\tilde\tau^{-1}(G_n)}.
\end{align*}
As  the image of $(G_n)$ under $\tilde \tau^{-1}$ is a quasi-open
nest by the previous proposition,  we arrive at $h\cdot
(f\circ\tau)\in D(Q_2)_\loc^\bullet$ as desired.

We now turn to proving the remaining statements of (b). Here, we
improve the  arguments from Lemma \ref{comp_tau} with what we have
already shown. Let $H$ be the scaling $U^{-1}$ and let $f\in
D(Q_1)_\loc^\bullet$. Since $D(Q_1)_\loc^\bullet$ is an algebra by
\cite{Kuw98}, Theorem 4.1, and \cite{FOT94}, Theorem 1.4.2, we have
$Hf\in D(Q_1)_\loc^\bullet$.  By what we have already shown this
gives $f\circ\tau=h\cdot(Hf)\circ\tau\in D(Q_2)_\loc^\bullet$.

According to \cite{Kuw98}, Lemma~4.1, both $f$ and $f \circ \tau$
have quasi-continuous versions. Since $\tau = \tilde \tau$ a.e., it
follows that $\widetilde{f \circ \tau} = \tilde f \circ \tilde \tau$
a.e. and since $\tilde \tau$ is a quasi-homeomorphism that maps nets
to nests, $\tilde f \circ \tilde \tau$ is quasi-continuous.
Therefore, $\widetilde{f \circ \tau} = \tilde f \circ \tilde \tau$
by \cite{FOT94}, Theorem~2.1.2.
\end{proof}

\section{The Beurling-Deny decomposition}\label{sec-Beurling-Deny}
In this section we show how the Beurling-Deny decomposition of a
quasi-regular Dirichlet form transforms under an order isomorphism
(Theorem \ref{transformation_decomposition}).

\medskip

Let $Q$ be a quasi-regular Dirichlet form on $L^2(X,m)$. Let
$\Gamma$ be a measure on $X$ that charges no polar sets. A
quasi-closed set $F\subset X$ is called \emph{quasi-support} of
$\Gamma$ if $\Gamma(F^c)=0$ and for every quasi-closed set $\tilde
F$ with $\Gamma(\tilde F^c)=0$ the set $F\setminus \tilde F$ is
polar.

The quasi-support of $\Gamma$ is determined up to a polar set, and
we write $\supp_Q(\Gamma)$ whenever the choice does not matter. The
quasi-support of a measurable function $f\colon X\lra \IR$ is
defined as the quasi-support of $\abs{f} m$ and we denote it by
$\supp_Q(f)$.

Let $\Delta_X=\{(x,x)\mid x\in X\}$. The form $Q$ can be decomposed as

$$Q(f,g)=Q^{(c)}(f,g)+\int_{X\times X\setminus \Delta_X}(\tilde f(x)-\tilde f(y))(\tilde g(x)-\tilde g(y))\,dJ(x,y)\\
+\int_X \tilde f(x)\tilde g(x)\,dk(x)
$$
for all $f,g\in D(Q)$, where
\begin{itemize}
\item $Q^{(c)}$ is a positive symmetric bilinear form such that $Q^{(c)}(f,g)=0$ for all $f,g\in D(Q)$ such that $f$ is constant on a quasi-open set containing $\supp_Q(g)$,
\item $J$ is a $\sigma$-finite symmetric measure on $(X\times X)\setminus \Delta_X$ such that $J((N\times X)\setminus \Delta_X)=0$ for all polar $N\subset X$,
\item $k$ is a $\sigma$-finite measure on $X$ such that $k(N)=0$ for all polar $N\subset X$.
\end{itemize}
Moreover, $Q^{(c)}$, $J$ and $k$ are unique among all maps with the
properties listed above (see \cite{DMS97}, Theorem 1.2, or
\cite{Kuw98}, Theorem 5.1).

For $f \in D(Q) \cap L^\infty(X,m)$ the \textit{local part} of the
energy measure $\Gamma^{(c)}(f)$ is the measure defined by the
identity
\begin{align*}
\int_X \tilde{\varphi} \,d\Gamma^{(c)}(f)=Q^{(c)}(\varphi f,f)-\frac 1 2 Q^{(c)}(\varphi,f^2)
\end{align*}
for $\varphi\in D(Q)\cap L^\infty(X,m)$, see \cite{Kuw98}, Theorem 5.2. The local part of $Q$ then satisfies
\begin{align*}
Q^{(c)}(f)=\int_X d\Gamma^{(c)}(f)
\end{align*}
for all $f\in D(Q)\cap L^\infty(X,m)$. By polarization
$\Gamma^{(c)}(\cdot)$ can be extended to a measure valued bilinear
form $\Gamma^{(c)}(\cdot,\cdot)$ on $D(Q)\cap L^\infty(X,m)$. It
satisfies the following product rule (see \cite{Kuw98}, Lemma 5.2):
\begin{align*}
\Gamma^{(c)}(fg,h)=\tilde f\Gamma^{(c)}(g,h)+\tilde g\Gamma^{(c)}(f,h)
\end{align*}
for all $f,g,h\in D(Q)\cap L^\infty(X,m)$. Moreover, due to the
locality property of $Q^{(c)}$, the energy measure satisfies $\1_G
\Gamma^{(c)}(f)=0$ whenever $f$ is constant a.e. on the quasi-open
set $G$, see \cite{Kuw98}, Lemma~5.1. Thus $\Gamma^{(c)}$ can be
extended to $D(Q)_\loc^\bullet$ via
\begin{align*}
\1_{G_n}\Gamma^{(c)}(f)=\1_{G_n}\Gamma^{(c)}(f_n),
\end{align*}
where $(G_n)$ is a nest of quasi-open sets and $f_n\in D(Q)$ such that $\1_{G_n}f_n=\1_{G_n} f$ a.e. This extension still satisfies the Leibniz rule (see \cite{Kuw98},
Lemma~5.3 for details).  

%
%
%
%
%
%
Conversely, the product rule for the energy measure implies the
strong locality of the form. This result is essentially well-known,
see e.g. \cite{Stu94}. We state (and prove)  it here in a form
suitable for the proof of the main theorem of this section.

\begin{lemma}\label{Leibniz_strongly_local} Let $Q$ be a
quasi-regular Dirichlet form on $L^2 (X,m)$.  Denote by $M(X)$ the
signed measures on $X$ endowed with the total variation norm and let
\begin{align*}
\hat\Gamma\colon D(Q)\times D(Q)\lra M(X)
\end{align*}
be a positive, symmetric and bilinear map such that $\hat
\Gamma(f,f)(X)\leq Q(f)$ and $\hat\Gamma(f,f)$ charges no polar set
for all $f\in D(Q)$. Let $\hat Q(f,g)=\hat\Gamma(f,g)(X)$ for
$f,g\in D(Q)$.  If $D\subset D(Q)$ is a dense subspace and $\hat
\Gamma$ satisfies the product rule
\begin{align*}
\hat\Gamma(fg,h)=\tilde f\hat\Gamma(g,h)+\tilde g\hat\Gamma(f,h)
\end{align*}
for all $f,g,h\in D\cap L^\infty(X,m)$, then $\hat Q$ is strongly local in the sense that
\begin{align*}
\hat Q(f,g)=0
\end{align*}
whenever there exists an open set $V$ such that $\supp_Q g\subset V$ and $f$ is constant a.e. on $V$.
\end{lemma}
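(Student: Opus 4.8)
The plan is to extract from the purely algebraic product rule two genuine locality properties of the measure-valued form $\hat\Gamma$ and then combine them via a decomposition along the quasi-support of $g$. First I would reduce to bounded functions: since $\hat\Gamma(f,f)(X)\le Q(f)\le\|f\|_Q^2$, Cauchy--Schwarz for the positive symmetric form $(f,g)\mapsto\hat\Gamma(f,g)(X)$ shows $\hat Q$ is continuous on $(D(Q),\|\cdot\|_Q)$, and the analogous bound $\|\hat\Gamma(f,g)\|_{M(X)}\le\hat\Gamma(f,f)(X)^{1/2}\hat\Gamma(g,g)(X)^{1/2}$ gives continuity with values in $M(X)$. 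Truncating by $(\cdot\wedge n)\vee(-n)$ preserves both hypotheses (constancy of $f$ on $V$ once $n\ge|c|$, and $\supp_Q g_n\subseteq\supp_Q g$) and converges in $\|\cdot\|_Q$, so it suffices to treat $f,g\in D(Q)\cap L^\infty$; along the way I would also extend the product rule from $D\cap L^\infty$ to $D(Q)\cap L^\infty$ by density, using the $M(X)$-continuity just noted together with quasi-everywhere convergence of quasi-continuous versions of the approximants.

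The first locality property is a concentration statement: for $u\in D(Q)\cap L^\infty$ and $h\in D(Q)$ the measure $\hat\Gamma(u,h)$ is concentrated on $\supp_Q u$. To prove it I would use that $G:=(\supp_Q u)^c$ is quasi-open and that quasi-regularity supplies functions $\psi_n\in D(Q)\cap L^\infty$ with $\supp_Q\psi_n\subseteq G$ whose positivity sets $\{\tilde\psi_n>0\}$ exhaust $G$ quasi-everywhere. Since $u=0$ q.e. on $G$ while $\psi_n=0$ q.e. off $G$, we have $u\psi_n=0$, so the product rule gives
\begin{align*}
0=\hat\Gamma(u\psi_n,u)=\tilde u\,\hat\Gamma(\psi_n,u)+\tilde\psi_n\,\hat\Gamma(u,u).
\end{align*}
On $G$ one has $\tilde u=0$ q.e., and because $\hat\Gamma$ charges no polar set the first term vanishes there; hence $\tilde\psi_n\,\hat\Gamma(u,u)=0$ on $G$, and letting $\{\tilde\psi_n>0\}\nearrow G$ yields $\hat\Gamma(u,u)(G)=0$. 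The case of general $h$ then follows from the measure-valued Cauchy--Schwarz inequality $|\hat\Gamma(u,h)(A)|\le\hat\Gamma(u,u)(A)^{1/2}\hat\Gamma(h,h)(A)^{1/2}$.

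The second property is that $\hat\Gamma(f,\cdot)$ vanishes where $f$ is locally constant. Given $f=c$ a.e. on the open set $V$, I would choose cutoffs $\phi\in D(Q)\cap L^\infty$ with $\supp_Q\phi\subseteq V$ and $\{\tilde\phi>0\}$ exhausting $V$ q.e. Then $f\phi=c\phi$ a.e., and comparing the product rule for $f\phi$ with bilinearity for $c\phi$ gives
\begin{align*}
\tilde\phi\,\hat\Gamma(f,h)=(c-\tilde f)\,\hat\Gamma(\phi,h).
\end{align*}
By the concentration step $\hat\Gamma(\phi,h)$ is concentrated on $\supp_Q\phi\subseteq V$, while $f=c$ a.e. on the open set $V$ forces $\tilde f=c$ q.e. on $V$; hence the right-hand side vanishes, and exhausting $V$ by $\{\tilde\phi>0\}$ yields $\1_V\hat\Gamma(f,h)=0$. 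To finish, take $h=g$: concentration applied to $g$ gives $\hat\Gamma(f,g)(X)=\hat\Gamma(f,g)(\supp_Q g)$, and since $\supp_Q g\subseteq V$ and $\1_V\hat\Gamma(f,g)=0$ this equals $0$, i.e. $\hat Q(f,g)=0$.

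I expect the main obstacle to be precisely the passage from the algebraic product rule to these measure-theoretic locality statements, i.e. the concentration lemma. Two points need care. First, one must produce the adapted cutoff families $\psi_n,\phi$ inside prescribed quasi-open sets and check that their positivity sets exhaust those sets quasi-everywhere, which is where quasi-regularity (in the form used via \cite{Kuw98}, Theorem~4.1) enters. Second, and more delicate, is the constant interplay between almost-everywhere statements (governing the domain of $\hat\Gamma$ and the measures $|u|m$ defining $\supp_Q$) and quasi-everywhere statements (governing quasi-continuous versions such as $\tilde u$ and $\tilde f$): one repeatedly needs that a function vanishing a.e. on a quasi-open set vanishes q.e. there, and that $f=c$ a.e. on $V$ upgrades to $\tilde f=c$ q.e. on $V$. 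These compatibilities, together with the density extension of the product rule to $D(Q)\cap L^\infty$, form the technical heart; the remaining bilinearity and Cauchy--Schwarz manipulations are routine.
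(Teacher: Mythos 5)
Your proposal is correct, and its skeleton coincides with the paper's: extend the product rule from $D\cap L^\infty$ to $D(Q)\cap L^\infty(X,m)$ by density (using the total-variation bound $\abs{\hat\Gamma(f,g)}(X)\leq\hat\Gamma(f,f)(X)^{1/2}\hat\Gamma(g,g)(X)^{1/2}\leq Q(f)^{1/2}Q(g)^{1/2}$ together with q.e.\ convergent quasi-continuous versions), derive from it the locality of $\hat\Gamma$, and conclude by splitting $X$ into $V$ and $(\supp_Q g)^c$ and applying the measure-valued Cauchy--Schwarz inequality. The one genuine difference lies in the middle step: the paper does not prove locality directly but invokes \cite{FOT94}, Corollary 3.2.1, via the transfer principle of \cite{Kuw98}, Lemma 5.2, remarking that those proofs only use the present hypotheses; you instead give a self-contained argument, deducing both the concentration of $\hat\Gamma(u,\cdot)$ on $\supp_Q u$ and the vanishing $\1_V\hat\Gamma(f,\cdot)=0$ from the identities $0=\tilde u\,\hat\Gamma(\psi_n,u)+\tilde\psi_n\,\hat\Gamma(u,u)$ and $\tilde\phi\,\hat\Gamma(f,h)=(c-\tilde f)\,\hat\Gamma(\phi,h)$, exhausting the relevant quasi-open set by the positivity sets of cutoffs and killing polar exceptional sets with the no-polar-charge hypothesis. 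What your route buys is independence from the cited references, at the cost of two standard facts you must (and do) flag: the existence, for a quasi-open $G$, of $\psi_n\in D(Q)\cap L^\infty$ with $\{\tilde\psi_n>0\}$ exhausting $G$ q.e.\ and quasi-supports inside $G$ (best arranged by taking a single $\psi$ with $\{\tilde\psi>0\}=G$ q.e.\ and setting $\psi_n=(n\psi-1)_+\wedge 1$, so that $\supp_Q\psi_n\subseteq\{\tilde\psi\geq 1/n\}$), and the a.e.-to-q.e.\ upgrades on open and quasi-open sets. Your explicit truncation reduction to bounded $f,g$ is an improvement in transparency over the paper, which leaves this implicit in the citation; note only that the reduction ``we may assume $\norm{f_k}_\infty\leq\norm{f}_\infty$'' in the density extension requires a small argument (e.g.\ Banach--Saks means of truncations, using that $D$ is a subspace), a gloss your write-up shares with the paper's own proof.
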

\begin{proof}
First we prove that the product rule holds indeed for all $f,g,h\in
D(Q)\cap L^\infty (X,m)$. Let $(f_k)$, $(g_k)$, $(h_k)$ be sequences
in $D\cap L^\infty(X,m)$ with $f_k\to f$, $g_k\to g$, $h_k\to h$ in
$D(Q)$. We may assume additionally that $\norm{f_k}_\infty\leq
\norm{f}_\infty$ for all $k\in \IN$, $\tilde f_k\to \tilde f$ q.e.
and the same for $(g_k)$, $(h_k)$.

By \cite{FOT94}, Theorem 1.4.2, we have $f_k g_k\to fg$ in $D(Q)$.
Since $\hat\Gamma$ is bounded, we have $\hat\Gamma(f_k g_k,h_k)\to
\hat\Gamma(fg,h)$ in total variation norm. On the other hand, the
product rule implies
\begin{align*}
\hat\Gamma(f_k g_k,h_k)=\tilde f_k \hat\Gamma(g_k,h_k)+\tilde g_k\hat\Gamma(f_k,h_k).
\end{align*}
Moreover,
\begin{align*}
\norm{\tilde f\hat\Gamma(g,h)-\tilde f_k\hat\Gamma(g_k,h_k)}&\leq \norm{(\tilde f-\tilde f_k)\hat\Gamma(g,h)}+\norm{\tilde f_k(\hat\Gamma(g,h)-\hat\Gamma(g_k,h_k))}\\
&\leq\int_X \abs{\tilde f-\tilde f_k}\,d\hat\Gamma(g,h)+\norm{f}_\infty \norm{\hat\Gamma(g,h)-\hat\Gamma(g_k,h_k)}.
\end{align*}
The first summand converges to $0$ by Lebesgue's theorem, while the
convergence of the second summand follows once again from the
boundedness of $\hat\Gamma$. The same argument can be applied to
$\tilde g_k\hat\Gamma(f_k,h_k)$ so that we arrive at
\begin{align*}
\hat\Gamma(fg,h)=\lim_{k\to\infty}(\tilde f_k\hat\Gamma(g_k,h_k)+\tilde g_k\hat\Gamma(f_k,h_k))=\tilde f\hat\Gamma(g,h)+\tilde g\hat\Gamma(f,h)
\end{align*}
as desired.

From the product rule we can infer $\hat\Gamma(f,f)(G)=0$ for every
quasi-opens set $G$ and every $f\in D(Q)$ such that $f$ is constant
a.e. on $G$. Indeed, this follows from \cite{FOT94}, Corollary
3.2.1,  by the transfer principle as noted in \cite{Kuw98}, Lemma
5.2. Notice that while these results are formulated for the strongly local energy measure, the
proofs only use the assumptions of the present lemma.

Now, if $V\subset X$ is open, $\supp_Q g\subset V$ and $f$ is constant a.e. on $V$, then
\begin{align*}
\abs{\hat \Gamma(f,g)}(X)&\leq \abs{\hat \Gamma(f,g)}(V)+\abs{\hat \Gamma(f,g)}(\supp_Q g)^c).
\end{align*}
As $f$ is constant a.e. on $V$ and $g=0$ a.e. on $(\supp_Q g)^c$, $\abs{\hat\Gamma(f,g)}(X)=0$ follows from the Cauchy-Schwarz inequality for $\hat\Gamma$, which also carries over easily to this setting. Thus, $\hat Q(f,g)=0$.
\end{proof}

For $f,\phi\in D(Q)\cap L^\infty(X,m)$ we define the truncated form
\begin{align*}
Q_\phi(f)=Q(\phi f)-Q(\phi f^2,\phi).
\end{align*}
From the product rule it follows that
$$
Q_\phi(f)=\int_X \tilde \phi^2\,d\Gamma^{(c)}(f)
\quad+\int_{(X\times
X)\setminus\Delta_X}\tilde\phi(x)\tilde\phi(y)\,(\tilde f(x)-\tilde
f(y))^2\,dJ(x,y).$$

\begin{theorem}[Transformation Beurling-Deny decomposition]\label{transformation_decomposition}
Let $Q_i$ be irreducible quasi-regular Dirichlet forms on
$L^2(X_i,m_i)$, $i \in \{1,2\}$, and let $U:L^2(X_1,m_1)
\longrightarrow  L^2(X_2,m_2)$ an order isomorphism intertwining the
associated semigroups. Denote by $h$ and $\tilde \tau$ the
associated scaling and quasi-homeomorphism. Then, the following
holds: \begin{itemize}
\item
The energy measures $\Gamma^{(c)}_1$ and $\Gamma^{(c)}_2$ are
related by
\begin{align*}
\norm{U}^2\Gamma_1^{(c)}(f)&=\tilde\tau_\#(\tilde h^2 \Gamma^{(c)}_2 (f\circ\tilde\tau))
\end{align*}
for all $f\in D(Q_1)_\loc^\bullet$.

\item The jump measures $J_1$ and $J_2$ satisfy
\begin{align*}
\norm{U}^2 J_1&=(\tilde \tau\times \tilde \tau)_\#((\tilde h\otimes \tilde h)J_2).
\end{align*}

\end{itemize}
\end{theorem}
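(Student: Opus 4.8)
The plan is to play the strongly local and the jump contributions off against each other by means of the truncated form $Q_{i,\phi}$, whose defining formula separates them cleanly. The pivotal observation is a multiplicativity identity: since $Uf=h\,(f\circ\tau)$, we have $U(\phi f)=(U\phi)\,(f\circ\tau)$ and $U(\phi f^2)=(U\phi)\,(f\circ\tau)^2$ for $\phi,f\in D(Q_1)\cap L^\infty(X_1,m_1)$. Writing $\Psi:=U\phi$ and $F:=f\circ\tau$ and using $Q_2(Ua,Ub)=\norm{U}^2Q_1(a,b)$ from Corollary~\ref{corollary:intertwined forms}, this yields
\[
\norm{U}^2Q_{1,\phi}(f)=Q_2(\Psi F)-Q_2(\Psi F^2,\Psi)=Q_{2,\Psi}(F).
\]
First I would fix this identity rigorously, choosing $\phi$ with quasi-support inside a member of a nest on which the quasi-continuous $\tilde h$ is bounded, so that $\Psi=\tilde h\,(\phi\circ\tilde\tau)$ is bounded and the truncated-form formula applies on both sides. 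Reducing at the outset to the regular case, exactly as in the proof of Theorem~\ref{tau_quasi-homeo}, makes this boundedness and support bookkeeping transparent, since all objects ($\Gamma^{(c)}$, $J$, $\tilde\tau$, $\tilde h$) transform naturally under quasi-homeomorphisms.

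Next I would expand both sides with the truncated-form formula and transport the $X_2$-integrals to $X_1$. Using $\widetilde{Uf}=\tilde h\,(\tilde f\circ\tilde\tau)$ q.e.\ (Lemma~\ref{h=0_polar} together with Lemma~\ref{lemma:tau leaves local space invariant}) we have $\tilde\Psi=\tilde h\,(\tilde\phi\circ\tilde\tau)$ and $\tilde F=\tilde f\circ\tilde\tau$. The local integrand carries the factor $\tilde h^2(\tilde\phi\circ\tilde\tau)^2$ and the jump integrand the factor $\tilde h(x)\tilde h(y)(\tilde\phi\circ\tilde\tau)(x)(\tilde\phi\circ\tilde\tau)(y)$; pushing forward along $\tilde\tau$ and along $\tilde\tau\times\tilde\tau$ (here one uses that $\tilde\tau$ is q.e.\ injective, by Theorem~\ref{tau_quasi-homeo} and Proposition~\ref{U_D_loc}, so the off-diagonal is mapped to the off-diagonal) turns these into $\tilde\phi^2$ integrated against $\tilde\tau_\#(\tilde h^2\Gamma^{(c)}_2(f\circ\tilde\tau))$ and into $\tilde\phi(u)\tilde\phi(v)(\tilde f(u)-\tilde f(v))^2$ integrated against $(\tilde\tau\times\tilde\tau)_\#((\tilde h\otimes\tilde h)J_2)$. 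Subtracting the intrinsic expansion of $\norm{U}^2Q_{1,\phi}(f)$ gives the master identity
\[
\int_{X_1}\tilde\phi^2\,d\alpha_f+\int_{(X_1\times X_1)\setminus\Delta_{X_1}}\tilde\phi(u)\tilde\phi(v)(\tilde f(u)-\tilde f(v))^2\,d\beta=0,
\]
where $\alpha_f:=\norm{U}^2\Gamma^{(c)}_1(f)-\tilde\tau_\#(\tilde h^2\Gamma^{(c)}_2(f\circ\tilde\tau))$ and $\beta:=\norm{U}^2J_1-(\tilde\tau\times\tilde\tau)_\#((\tilde h\otimes\tilde h)J_2)$. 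Both assertions of the theorem amount to $\alpha_f=0$ for all $f$ and $\beta=0$.

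The decisive step is to separate the two terms, exploiting that the first only sees the diagonal while the second lives off it. Given an off-diagonal point $(u_0,v_0)$, I would pick $\phi_1,\phi_2\in D(Q_1)\cap L^\infty$ with disjoint quasi-supports near $u_0$ and $v_0$; inserting $\phi=\phi_1+\phi_2$ into the master identity and subtracting the identities for $\phi_1$ and $\phi_2$ removes the local term, because $\widetilde{\phi_1}\widetilde{\phi_2}=0$ q.e.\ and $\alpha_f$ charges no polar set (being a difference of measures, each a weighted pushforward of an energy measure, that charge no polar set). Using the symmetry of $\beta$ inherited from that of $J_1$, $J_2$ and $\tilde h\otimes\tilde h$, the surviving relation reduces to $\int\widetilde{\phi_1}(u)\widetilde{\phi_2}(v)(\tilde f(u)-\tilde f(v))^2\,d\beta=0$. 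Letting $\phi_1,\phi_2$ concentrate at $u_0,v_0$ and choosing, from the point-separating family furnished by quasi-regularity, an $f$ with $\tilde f(u_0)\neq\tilde f(v_0)$ forces $\beta$ to vanish near $(u_0,v_0)$; as this point was arbitrary off the diagonal, $\beta=0$, which is the second assertion. Feeding $\beta=0$ back into the master identity yields $\int\tilde\phi^2\,d\alpha_f=0$ for all admissible $\phi$, and since products $\widetilde{\phi_1}\widetilde{\phi_2}$ are rich enough to test a measure charging no polar set, $\alpha_f=0$; this gives the first assertion on $D(Q_1)\cap L^\infty$, and it extends to all $f\in D(Q_1)_\loc^\bullet$ through the nest-wise definition of $\Gamma^{(c)}$ on the local space.

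I anticipate two main obstacles. The first is the rigorous localization keeping $U\phi$ bounded and all compositions inside the respective form domains; this is where the nest technology of Section~\ref{sec-Regularity} must be deployed carefully, namely the boundedness of $\tilde h$ on nest members, the fact that $f\circ\tau\in D(Q_2)_\loc^\bullet$ from Lemma~\ref{lemma:tau leaves local space invariant}, and the nest-to-nest property of Proposition~\ref{U_D_loc}. The second and more essential obstacle is the separation of the diagonal and off-diagonal contributions in the master identity; everything hinges on the factorized shape $\tilde\phi(x)\tilde\phi(y)$ of the truncated jump term together with the disjoint-support and point-separation arguments, and it is precisely here that quasi-regularity (point separation q.e.) and the q.e.\ injectivity of $\tilde\tau$ (preservation of the off-diagonal) are indispensable.
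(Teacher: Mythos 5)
Your proposal is correct in outline, but at the decisive step it takes a genuinely different route from the paper, so a comparison is in order. The first half coincides: both arguments derive, from the multiplicativity $U(\phi f)=(U\phi)(f\circ\tau)$ and Corollary~\ref{corollary:intertwined forms}, the identity $\norm{U}^2Q_{1,\phi}(f)=Q_{2,U\phi}(f\circ\tau)$ for $\phi,f\in\bigcup_k D(Q_1)_{G_k}\cap L^\infty(X_1,m_1)$ (with the nest of Lemma~\ref{comp_tau}, refined so that $h\in L^\infty(\tilde\tau^{-1}(G_k))$), then expand both sides by the truncated-form formula and transport along $\tilde\tau$. From there the paper does \emph{not} separate the two parts by hand: it takes $\phi_n\nearrow 1$ q.e.\ (built via Lemma~\ref{approx_D_loc}), passes to the limit by monotone convergence to obtain one global, unlocalized identity, defines $\hat\Gamma_1^{(c)}(f,g)=\tilde\tau_\#(\tilde h^2\,\Gamma_2^{(c)}(f\circ\tau,g\circ\tau))$, checks that the associated form is dominated by $Q_1$, charges no polar sets and inherits the product rule from $\Gamma_2^{(c)}$, concludes strong locality from Lemma~\ref{Leibniz_strongly_local}, and then finishes in one stroke by the \emph{uniqueness} of the Beurling--Deny decomposition (\cite{DMS97}, \cite{Kuw98}). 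You instead prove the relevant uniqueness statement directly: disjoint-support polarization kills the local term (valid, since $\tilde\phi_1\tilde\phi_2=0$ q.e.\ and $\alpha_f$ charges no polar sets, $\tilde\tau^{-1}$ mapping polar sets to polar sets by Proposition~\ref{U_D_loc}), and q.e.\ point separation with concentrating bumps forces the off-diagonal discrepancy to vanish. Your route is more elementary and self-contained and stays local throughout; the paper's route outsources exactly this separation to an established uniqueness theorem and thereby avoids delicate signed-measure bookkeeping. On that bookkeeping, one genuine caution: your $\beta=\norm{U}^2J_1-(\tilde\tau\times\tilde\tau)_\#((\tilde h\otimes\tilde h)J_2)$ is not a signed measure globally, since both terms are typically locally infinite near the diagonal; the master identity and the separation must therefore be run with the finite weighted differences carrying the density $(\tilde f(x)-\tilde f(y))^2$, and $J_1$ recovered afterwards from their vanishing for a countable q.e.-separating family $(f_n)$ — the exceptional set where no $\tilde f_n$ separates lies in $(N\times X_1)\cup(X_1\times N)$ with $N$ polar, hence is null for both jump measures. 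With that reading, and noting that your preliminary reduction to the regular case (which the paper does not perform here, arguing directly in the quasi-regular setting) is legitimate via the transfer method of \cite{CMR94}, your plan goes through and yields the same conclusion, including the final localization to $f\in D(Q_1)_\loc^\bullet$, which matches the paper's closing step.
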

\begin{proof}
We can assume without loss of generality that $\norm{U}=1$. By Lemma
\ref{comp_tau} there is a nest $(G_k)_{k\in\IN}$ of quasi-open
subsets of $X_1$ such that $f\circ\tau\in D(Q_2)\cap
L^\infty(X_2,m_2)$ for all $f\in \bigcup_k D(Q_1)_{G_k}\cap
L^\infty(X_1,m_1)$.

Since $h\in D(Q_2)_\loc^\bullet$, by \cite{Kuw98}, Theorem 4.1 there
exists a nest of quasi-open sets $ (G_k')$ such that  $h \in
L^\infty (G_k')$ for each $k \in \IN$. The maps $\tilde \tau, \tilde
\tau^{-1}$ are quasi-homeomorphisms that map nests to nests, see
Proposition~\ref{U_D_loc}. Therefore, they also map quasi-open sets to
quasi-open sets, see Lemma~\ref{images_quasi-homeo}. It follows from
these properties that we can assume $h\in
L^\infty(\tilde\tau^{-1}(G_k))$ for all $k\in \IN$.

Let $f,\phi\in \bigcup_k D(Q_1)_{G_k}\cap L^\infty(X_1,m_1)$. Using the intertwining property we obtain
\begin{align*}
Q_{1,\phi}(f)&=Q_1(\phi f)-Q_1(\phi f^2,\phi)\\
&=Q_2(U(\phi f))-Q_2(U(\phi f^2),U\phi)\\
&=Q_2((U\phi)(f\circ\tau))-Q_2((U\phi)(f^2\circ\tau),U\phi)\\
&=Q_{2,U\phi}(f\circ\tau).
\end{align*}

Thus,
\begin{align*}
&\int\tilde\phi^2\,d\Gamma_1^{(c)}(f)+\int\tilde\phi(x)\tilde\phi(y)\abs{\tilde f(x)-\tilde f(y)}^2\,dJ_1(x,y)\\
&=\int (\widetilde{U\phi})^2\,d\Gamma_2^{(c)}(f\circ\tau)+\int\widetilde{U\phi}(x)\widetilde{U\phi}(y)\abs{\tilde f(\tilde \tau(x))-\tilde f(\tilde \tau(y))}^2\,d J_2(x,y).
\end{align*}

By Lemma \ref{approx_D_loc} there is a sequence $(\psi_n)$ in $\bigcup_k D(Q_1)_{F_k}$ such that $\tilde\psi_n\to 1$ q.e. Define $(\phi_n)$ recursively by $\phi_1= (\psi_1\wedge 1) \vee 0,\,\phi_{n+1}=(0\vee \psi_n\wedge 1)\vee \phi_n$. Then $\phi_n\in \bigcup_k D(Q_1)_{F_k}\cap L^\infty(X_1,m_1)$ and $\tilde\phi_n\nearrow 1$ q.e.

Moreover, since  $\widetilde{U\phi_n} = \tilde h  \cdot \widetilde{\phi_n \circ \tau} = \tilde h \cdot (\tilde \varphi_n \circ \tilde \tau)$ q.e., see Lemma~\ref{h=0_polar} and Lemma~\ref{lemma:tau leaves local space invariant}, and since $\tilde \tau$ maps nests to nests, we also have $\widetilde{U\phi_n}\nearrow \tilde h$ q.e.

An application of the monotone convergence theorem and the identity $\widetilde{f \circ \tau} = \tilde f \circ \tilde \tau$, see Lemma~\ref{lemma:tau leaves local space invariant}, gives
\begin{align*}
&{}\int d\Gamma_1^{(c)}(f)+\int \abs{\tilde f(x)-\tilde f(y)}^2\,dJ_1(x,y)\\
&=\lim_{n\to\infty}\left(\int \tilde\phi_n^2\,d\Gamma_1^{(c)}(f)+\int \tilde\phi_n(x)\tilde\phi_n(y)\abs{\tilde f(x)-\tilde f(y)}^2\,dJ_1(x,y)\right)\\
&= \lim_{n\to\infty}Q_{1,\varphi_n}(f)\\
&=\lim_{n\to\infty}Q_{2,U\varphi_n}(f)\\
&=\lim_{n\to\infty}\left(\int \widetilde{U\phi_n}^2\,d\Gamma_2^{(c)}(f\circ\tau)+\int \widetilde{U\phi_n}(x)\widetilde{U\phi_n}(y)\abs{\tilde f(\tilde\tau(x))-\tilde f(\tilde \tau(y))}^2\,dJ_2(x,y)\right)\\
&=\int \tilde h^2\,d\Gamma_2^{(c)}(f\circ\tau)+\int \tilde h(x)\tilde h(y)\abs{\tilde f(\tilde\tau(x))-\tilde f(\tilde\tau(y))}^2\,dJ_2(x,y).
\end{align*}

Let
\begin{align*}
\hat \Gamma_1^{(c)}(f,g)=\tilde\tau_\#(\tilde h^2\, d\Gamma_2^{(c)}(f\circ\tau,g\circ\tau))
\end{align*}
and $\hat Q_1^{(c)}(f,g)=\hat \Gamma_1^{(c)}(f,g)(X_1)$ for $f,g\in \bigcup_k D(Q_1)_{F_k}\cap L^\infty(X_1,m_1)$. Then $\hat \Gamma_1^{(c)}$ is a positive symmetric bilinear map that satisfies
\begin{align*}
0\leq \hat \Gamma_1^{(c)}(f)(X) \leq Q_1(f)-\int \tilde h(x)\tilde h(y)\abs{\tilde f(\tilde\tau(x))-\tilde f(\tilde\tau(y))}^2\,dJ_2(x,y)\leq Q_1(f)
\end{align*}
for all $f\in \bigcup_k D(Q_1)_{F_k}\cap L^\infty(X_1,m_1)$. Hence $\hat \Gamma_1^{(c)}$ (and thus $\hat Q_1^{(c)})$ extends continuously to $D(Q_1)$ and
\begin{align*}
&\quad Q_1^{(c)}(f)+\int \abs{\tilde f(x)-\tilde f(y)}^2\,dJ_1(x,y)+\int\abs{\tilde f(x)}^2\,dk_1(x)\\
&=\hat Q_1^{(c)}(f)+\int\abs{\tilde f(x)-\tilde f(y)}^2\,d((\tilde \tau\times \tilde\tau)_\# ((h\otimes h)\,J_2))(x,y)+\int\abs{\tilde f(x)}^2\,dk_1(x)
\end{align*}
for all $f\in D(Q_1)$.

Since $\tilde\tau^{-1}$ maps polar sets to polar sets, the measure
$\hat\Gamma_1^{(c)}(f)$ does not charge polar sets for all $f\in
D(Q_1)$. Moreover, the product rule for $\Gamma_2^{(c)}$ implies
immediately the product rule for $\hat\Gamma_1^{(c)}$ on $\bigcup_k
D(Q_1)_{F_k}\cap L^\infty(X_1,m_1)$. Thus, $\hat Q_1^{(c)}$ is
strongly local by Lemma \ref{Leibniz_strongly_local}.

Therefore the uniqueness of the Beurling-Deny decomposition yields
\begin{align*}
\hat Q_1^{(c)}(f)&=Q_1^{(c)}(f)\\
J_1&=(\tilde\tau\times\tilde\tau)_\#((h\otimes h)J_2).
\end{align*}
Finally, it is not hard to see that the energy measure of $\hat Q_1^{(c)}$ is given by
\begin{align*}
\hat \Gamma_1^{(c)}(f)=\tilde\tau_\#(\tilde h^2 \Gamma_2^{(c)}(f\circ\tau))
\end{align*}
for $f\in \bigcup_k D(Q_1)_{F_k}\cap L^\infty(X_1,m_1)$. The formula for arbitrary $f\in D(Q_1)_\loc^\bullet$ follows by localization.
\end{proof}

\begin{remark}
\begin{itemize}
\item  In the special situation of graphs the transformation of the jump
parts (which are the only parts present in that situation)  is
already known (see \cite{KLSW15}, Theorem 3.6).

\item
The transformation of the killing parts can be deduced from the
formulas for the jump and strongly local part. However, it should be
noted that the killing measure of $Q_2$ may depend not only on the
killing measure of $Q_1$, but also on the jump and strongly local
part.
\end{itemize}
\end{remark}

\section{Strongly local Dirichlet forms}\label{sec_strongly_local}
In this section we consider strongly local Dirichlet forms
satisfying some additional regularity assumptions. In this situation
we show that  the transformation $\tau$ has a version that is an isometry
with respect to the intrinsic metrics (Theorem
\ref{isometry_local}). As a technical byproduct we prove an alternative formula for the intrinsic metric of regular strongly local Dirichlet forms (Proposition~\ref{intrinsic_metric_weak}), which may be of independent interest.

\medskip

A quasi-regular Dirichlet form $Q$ is called \emph{strongly local}
if the jump measure $J$ and the killing measure $k$ in the
Beurling-Deny decomposition vanish. For simplicity's sake, we then
write $\Gamma$ for the local energy measure $\Gamma^{(c)}$. In the
proofs of this section  we will freely use properties of  $\Gamma$
such as extension to local spaces and product rule discussed in
Section \ref{sec-Beurling-Deny}.

For a strongly local regular Dirichlet form $Q$, the \emph{intrinsic
metric} $d_Q$ (see \cite{BM95,Stu94}) on $X$ is defined by
\begin{align*}
d_Q(x,y)=\sup\{\abs{f(x)-f(y)}\colon f\in D(Q)_\loc\cap C(X),\,\Gamma(f)\leq m\},
\end{align*}
where $D(Q)_\loc$ is the set of all functions $f\colon X\lra\IR$
such that for every open, relatively compact $G\subset X$ there
exists $f_G\in D(Q)$ such that $f\1_G=f_G \1_G$.

Notice that despite its name containing the word \textit{metric},
$d_Q$ may attain the value $0$ off the diagonal and may be infinite
for some points.


We say that a strongly local Dirichlet form $Q$ is \emph{strictly
local} if $d_Q$ is a complete metric that induces the original
topology on $X$. We say that $Q$ satisfies the
\emph{Sobolev-to-Lipschitz property} if every $f\in D(Q)$ with
$\Gamma(f)\leq m$ has a $1$-Lipschitz version.

\begin{example}
If $Q$ is the Dirichlet energy with Dirichlet boundary conditions on
a Riemannian manifold $(M,g)$, it is folklore that $d_Q$ coincides with the
geodesic metric induced by $g$, see e.g. Proposition~2.2 of \cite{ABE12}. Thus, $Q$ is strictly local if and
only if $(M,g)$ is (geodesically) complete. Moreover, $Q$ satisfies
the Sobolev-to-Dirichlet property by the converse to Rademacher's
theorem. For an $\IR^n$ version of this property, which can be transferred to manifolds by localization, see e.g. \cite{Hei}.
\end{example}

\begin{example}
Dirichlet forms of Riemannian energy measure spaces (see
\cite{AGS15}, Def. 3.16) are strictly local and satisfy the
Sobolev-to-Lipschitz property. In particular, this applies to the
Cheeger energy on $\mathrm{RCD}(K,\infty)$ spaces in the sense of
\cite{AGS14b}. If a metric measure space satisfies the stronger $\mathrm{RCD}(K,N)$ condition for finite $N$, then it is also locally compact and the Cheeger energy is a regular Dirichlet form.
\end{example}

\begin{example}
Dirichlet forms on  metric graphs (with Kirchhoff boundary
conditions)  are strictly local and satisfy the Sobolev-to-Lipshitz
property, see e.g. \cite{LSS,Haes2}.
\end{example}

In the course of the following two lemmas we will see that the
Sobolev-to-Lipschitz property can be suitably localized.

\begin{lemma}\label{local_space_ideal}
Let $Q$ be a quasi-regular strongly local Dirichlet form on
$L^2(X,m)$ and $f\in D(Q)_\loc^\bullet\cap L^\infty(X,m)$ with $\int
d\Gamma(f)<\infty$. Then $fg\in D(Q)$ for all $g\in D(Q)\cap
L^\infty(X,m)$.
\end{lemma}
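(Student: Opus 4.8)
The plan is to exhibit a sequence $(u_k)$ in $D(Q)$ with $u_k\to fg$ in $L^2(X,m)$ and $\sup_k Q(u_k)<\infty$; then, arguing exactly as in the proof of Proposition~\ref{h_local_strong} (bounded sequences in the Hilbert space $(D(Q),\norm{\cdot}_Q)$ have weakly convergent subsequences, and since weak and strong closures of convex sets agree the weak limit is forced to coincide with the $L^2$-limit), one concludes $fg\in D(Q)$. The entire difficulty lies in the uniform energy bound, so the sequence $(u_k)$ must be built so that its energy never ``sees'' the region where $f$ leaves the global domain.

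Since $f\in D(Q)_\loc^\bullet$, I would first fix, using \cite{Kuw98}, Theorem~4.1, a nest $(G_k)$ of quasi-open sets together with $f_k\in D(Q)$ satisfying $f|_{G_k}=f_k|_{G_k}$ a.e.; replacing $f_k$ by $(f_k\wedge\norm{f}_\infty)\vee(-\norm{f}_\infty)$ I may assume $\norm{f_k}_\infty\leq\norm{f}_\infty$. The key idea is to localize $g$ rather than to cut off $f$: because $(G_k)$ is a nest, $\bigcup_k D(Q)_{G_k}$ is dense in $D(Q)$ by Lemma~\ref{lemma:characterization nests in terms of capacity}, and using the cut-off property together with the fact that normal contractions do not increase the energy, I obtain $g_k\in D(Q)_{G_k}$ with $\norm{g_k}_\infty\leq\norm{g}_\infty$, $\sup_k Q(g_k)<\infty$ and $g_k\to g$ in $L^2$. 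By the definition of $D(Q)_{G_k}$ there is a closed $F_k\subset G_k$ with $g_k=0$ a.e. on $F_k^c$, hence $\tilde g_k=0$ q.e. on the open set $F_k^c$. Setting $u_k:=f_k g_k$ gives an element of $D(Q)$, since $D(Q)\cap L^\infty$ is an algebra (\cite{FOT94}, Theorem~1.4.2). As $f=f_k$ on $G_k\supset F_k$ and $g_k$ vanishes off $F_k$, one has $u_k=fg_k$ a.e., so $u_k\to fg$ in $L^2$ because $f$ is bounded.

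The heart of the argument is the uniform bound on $Q(u_k)=\int d\Gamma(f_k g_k)$, which the product rule splits into $\int\tilde f_k^2\,d\Gamma(g_k)$, $2\int\tilde f_k\tilde g_k\,d\Gamma(f_k,g_k)$ and $\int\tilde g_k^2\,d\Gamma(f_k)$. The first is at most $\norm{f}_\infty^2 Q(g_k)$, hence bounded. For the other two I invoke the crucial localization: since $\tilde g_k=0$ q.e. off $F_k$ and energy measures charge no polar set, both integrals are unchanged upon restriction to $F_k$; and on $F_k\subset G_k$ the defining relation $\1_{G_k}\Gamma(f)=\1_{G_k}\Gamma(f_k)$ of the local energy measure gives $\1_{F_k}\Gamma(f_k)=\1_{F_k}\Gamma(f)$. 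Thus $\int\tilde g_k^2\,d\Gamma(f_k)=\int_{F_k}\tilde g_k^2\,d\Gamma(f)\leq\norm{g}_\infty^2\int d\Gamma(f)<\infty$ by hypothesis, while the cross term is bounded by $\norm{f}_\infty\norm{g}_\infty\,\Gamma(f)(F_k)^{1/2}Q(g_k)^{1/2}$ through the Cauchy--Schwarz inequality for energy measures, again using $\Gamma(f)(F_k)\leq\int d\Gamma(f)<\infty$. This yields $\sup_k Q(u_k)<\infty$ and finishes the proof.

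The main obstacle, as this makes clear, is that the naive choice $u_k=f_k g$ is doomed: the term $\int\tilde g^2\,d\Gamma(f_k)$ then probes $\Gamma(f_k)$ on all of $X$, where $f_k$ has uncontrolled energy off $G_k$. Pushing $g$ into $D(Q)_{G_k}$ is exactly what confines every energy integral to a set on which $\Gamma(f_k)$ agrees with the globally finite measure $\Gamma(f)$, and this is the step through which the finite-energy assumption $\int d\Gamma(f)<\infty$ does its work.
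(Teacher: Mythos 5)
Your proof is correct and follows essentially the same route as the paper's: approximate $g$ by functions $g_k$ supported in the nest $(G_k)$ on which $f$ admits global representatives $f_k$, use the product rule together with the locality identity $\1_{G_k}\Gamma(f_k)=\1_{G_k}\Gamma(f)$ to bound the energies $Q(f_kg_k)$ uniformly by $\norm{f}_\infty^2\sup_k Q(g_k)$ and $\norm{g}_\infty^2\int d\Gamma(f)$, and pass to the limit. The only differences are cosmetic: the paper absorbs the cross term via $\Gamma(f_ng_n)\leq 2\tilde f_n^2\Gamma(g_n)+2\tilde g_n^2\Gamma(f_n)$ and concludes by lower semicontinuity of $Q$, whereas you bound the cross term separately by localized Cauchy--Schwarz and conclude by the equivalent weak-compactness argument.
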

\begin{proof}
Let $(G_k)$ be a nest of quasi-open subsets of $X$ such that $f\in
D_\loc(\{G_k\})$. Let $(g_n)$ be a sequence in $\bigcup_k
D(Q)_{G_k}\cap L^\infty(X,m)$ such that
$g_n\overset{\norm{\cdot}_Q}{\to} g$ and $\norm{g_n}_\infty\leq
\norm{g}_\infty$. By definition, for all $n\in\IN$ there exists $f_n
\in D(Q) \cap L^\infty(X,m)$ with $\|f_n\|_\infty \leq \|f\|_\infty$
such that $f  = f_n$ on a quasi-open set containing  the
quasi-support of $g_n$. Then, $fg_n = f_n g_n \in D(Q)\cap
L^\infty(X,m)$ (as  $D(Q)\cap L^\infty(X,m)$ is an algebra)  and $f
g_n\to fg$ in $L^2(X,m)$.

The lower semicontinuity of $Q$, its locality  and the product-rule
for the energy measure imply
\begin{align*}
Q(fg)&\leq \liminf_{n\to\infty} Q(fg_n)\\
&=\liminf_{n\to\infty} Q(f_n g_n)\\
&\leq 2\liminf_{n\to\infty}\left(\int \tilde f_n^2\,d\Gamma(g_n)+\int \tilde g_n^2\,d\Gamma(f_n)\right)\\
&=2\liminf_{n\to\infty}\left(\int \tilde f_n^2\,d\Gamma(g_n)+\int \tilde g_n^2\,d\Gamma(f)\right)\\
&\leq 2\left(\norm{f}_\infty^2 Q(g)+\norm{g}_\infty^2 \int d\Gamma(f)\right).
\end{align*}
Hence $fg\in D(Q)$.
\end{proof}

\begin{remark}
 The space of functions that appeared in the previous lemma satisfies
 $$\{f \in  D(Q)_\loc^\bullet\cap L^\infty(X,m) \mid \int d\Gamma (f) < \infty\} = D(Q)^{\rm ref} \cap L^\infty(X,m),$$  where  $D(Q)^{\rm ref}$ is the so-called reflected Dirichlet space  of $Q$ (see \cite{Kuw02}, Definition~4.1). It is known that $D(Q)  \cap L^\infty(X,m)$ is an (multiplicative) ideal in $D(Q)^{\rm ref} \cap L^\infty(X,m)$, c.f. \cite{Kuw02}, Lemma~4.2. We included a proof for the convenience of the reader.
\end{remark}

\begin{lemma}\label{local_Sobolev-to-Lipschitz}
Let $Q$ be a strongly local regular Dirichlet form on $L^2(X,m)$
with the Sobolev-to-Dirichlet property. Assume that $d_Q$ induces
the original topology on $X$. Then, any $f\in D(Q)_\loc^\bullet\cap
L^\infty(X,m)$ with $\Gamma(f)\leq L^2 m$  has a continuous version.
\end{lemma}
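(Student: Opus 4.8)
The plan is to deduce continuity from the Sobolev--to--Lipschitz property by manufacturing, near each point, a genuine element of $D(Q)$ that coincides with $f$ there and still obeys an energy bound of the form $\Gamma(\cdot)\le C^2 m$. Since continuity is a local property and $m$ has full support, it suffices to produce, for each $x_0\in X$, a continuous function equal to $f$ almost everywhere on some neighborhood of $x_0$: any two such local versions agree a.e.\ on overlaps, hence (being continuous and with $m$ of full support) agree everywhere there, and so glue to a global continuous version of $f$. I would first record that $f\in D(Q)_\loc^\bullet\cap L^\infty(X,m)$ has a quasi-continuous version by Lemma~\ref{approx_D_loc}, that for a strongly local regular form the intrinsic distance functions $\rho=d_Q(x_0,\cdot)$ are continuous, lie in $D(Q)_\loc$ and satisfy $\Gamma(\rho)\le m$, and that, because $d_Q$ induces the original topology and $X$ is locally compact, each $x_0$ has a relatively compact open neighborhood containing a $d_Q$-ball $B_{2r}(x_0)$ with $\overline{B_{2r}(x_0)}$ compact.

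The heart of the argument is a lattice truncation against a distance envelope supported in that relatively compact ball. Writing $M=\norm{f}_\infty$, I put $\phi=\bigl(M-\tfrac{M}{r}(\rho-r)_+\bigr)_+$, so that $\phi\equiv M$ on $B_r(x_0)$, ramps linearly down over $r<\rho<2r$, and vanishes for $\rho\ge 2r$; then I set $g=(f\wedge\phi)\vee(-\phi)$. On $B_r(x_0)$ one has $-M\le f\le M=\phi$ a.e., hence $g=f$ a.e.\ there, while $g$ vanishes outside $\overline{B_{2r}(x_0)}$. The decisive point of using $\wedge$ and $\vee$ rather than a multiplicative cut-off is that strong locality gives the pointwise identity $\Gamma(u\wedge v)=\1_{\{u\le v\}}\Gamma(u)+\1_{\{u> v\}}\Gamma(v)$, so that from $\Gamma(f)\le L^2 m$ and $\Gamma(\phi)=(M/r)^2\,\1_{\{r<\rho<2r\}}\Gamma(\rho)\le (M/r)^2 m$ one inherits $\Gamma(g)\le C^2 m$ with $C=\max\{L,M/r\}$, and crucially \emph{no} uncontrolled cross term appears. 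Since $\Gamma(g)$ is carried by the relatively compact set $\overline{B_{2r}(x_0)}$, on which $m$ is finite, we have $g\in D(Q)_\loc^\bullet\cap L^\infty(X,m)$ with $\int d\Gamma(g)<\infty$; multiplying by a cut-off $\eta\in C_c(X)\cap D(Q)$ with $\eta\equiv 1$ on $\supp g$ and invoking Lemma~\ref{local_space_ideal} then yields $g=g\eta\in D(Q)$.

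With $g\in D(Q)$ and $\Gamma(g)\le C^2 m$ in hand, the Sobolev--to--Lipschitz property applied to $g/C$ produces a version of $g$ that is $C$-Lipschitz with respect to $d_Q$; as $d_Q$ induces the original topology, this version is continuous, and it equals $f$ a.e.\ on $B_r(x_0)$. Gluing over an open cover of $X$ by such balls completes the proof. I expect the main obstacle to be exactly this passage from the merely local $f$ to a bona fide element of $D(Q)$ without spoiling the energy bound: a naive spatial cut-off $\eta f$ fails because the term $\tilde f^2\,\Gamma(\eta)$ is not dominated by $m$, so it is essential that the truncation be carried out by lattice operations against the intrinsic-distance envelope. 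It is worth emphasizing that because only continuity (not a Lipschitz constant) is claimed, one is free to use a steep envelope confined to a relatively compact ball, which is what lets this step go through under the sole assumptions that $d_Q$ induces the topology and $Q$ is regular and strongly local; the precise bound $\Gamma(f)\le L^2 m$ enters merely to guarantee that $\Gamma(f)$ is dominated by a finite multiple of $m$.
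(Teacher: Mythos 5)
Your argument is correct and follows the same skeleton as the paper's proof: cut $f$ down near each point by an intrinsic-distance envelope so as to obtain a genuine element of $D(Q)$ whose energy measure is still dominated by a constant multiple of $m$, pass through Lemma~\ref{local_space_ideal} (with a regularity cutoff $\eta$, resp.\ the paper's $\psi_k$) to get membership in $D(Q)$, apply the Sobolev-to-Lipschitz property after rescaling, and glue the continuous local versions (you glue over a cover by balls, the paper along an exhaustion $(G_k)$ -- immaterial). The one genuine difference is the truncation mechanism: you use the lattice operation $g=(f\wedge\phi)\vee(-\phi)$ together with the strong-locality identity $\Gamma(u\wedge v)\le \1_{\{u\le v\}}\Gamma(u)+\1_{\{u>v\}}\Gamma(v)$, whereas the paper multiplies, setting $f\phi_k$ with $\phi_k=(1-k\,d_Q(\cdot,H_k))_+$ and estimating $\Gamma(f\phi_k)\le 2\tilde f^2\Gamma(\phi_k)+2\tilde\phi_k^2\Gamma(f)\le 2(k^2\norm{f}_\infty^2+L^2)\,m$ via the product rule and Cauchy--Schwarz. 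This exposes the one substantive error in your write-up: your closing claim that ``a naive spatial cut-off $\eta f$ fails'' is wrong as a description of the multiplicative route -- the paper does exactly that, and it works because the cutoff is itself built from $d_Q$, so $\Gamma(\phi_k)\le k^2 m$ and the term $\tilde f^2\Gamma(\phi_k)\le k^2\norm{f}_\infty^2\, m$ \emph{is} dominated by a multiple of $m$ (your objection is valid only for a generic $\eta\in C_c(X)\cap D(Q)$, whose energy measure need not have bounded density with respect to $m$). Your lattice version buys a marginally cleaner constant (a maximum instead of a sum), but at the cost of invoking the truncation identity for energy measures of functions in $D(Q)_\loc^\bullet$, which you should justify or cite (it follows from strong locality, cf.\ \cite{Stu94} and the locality statements from \cite{Kuw98} used in Section~\ref{sec-Beurling-Deny}), whereas the paper's estimate uses only the product rule already established in the text. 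Two small repairs: locality kills $\Gamma(g)$ only on the open set $\{\rho>2r\}$, so $\Gamma(g)$ is carried by the closed ball $\{\rho\le 2r\}$, which need not be contained in $\overline{B_{2r}(x_0)}$ nor in your relatively compact neighborhood -- choose $r$ with $B_{3r}(x_0)$ inside that neighborhood to get compactness; and note that the existence of $\eta\in C_c(X)\cap D(Q)$ with $\eta\equiv 1$ on this compact set is precisely where regularity of $Q$ enters.
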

\begin{proof}
Let $(G_k)$ be an ascending sequence of open, relatively compact
subsets of $X$ with $\bigcup_k G_k=X$. Let $H_k=\{x\in G_k\mid
d(x,G_k^c)>\frac 1 k\}$ and $\phi_k=(1-k d_Q(x,H_k))_+$. By Lemma
1.9 of \cite{Stu95} (see Lemma 1 of \cite{Stu94} as well), the function
$\phi_k$ then belongs to $D(Q)_\loc$ and satisfies
$\Gamma(\phi_k)\leq k^2 m$. Moreover, we clearly have
$\supp\phi_k\subset \overline{G_k}$. Hence $\phi_k$ belongs to
$D(Q)\cap L^\infty_c(X,m)$ as any function from $D(Q)_\loc$ with
compact support belongs to $D(Q)$ by Theorem 3.5 of \cite{FLW14}.

By the product rule and Cauchy-Schwarz inequality we have
$f\phi_k\in D(Q)_\loc^\bullet\cap L^\infty_c(X,m)$ and
\begin{align*}
\Gamma(f\phi_k)=\tilde f^2\Gamma(\phi_k)+\tilde\phi_k^2\Gamma(f)+2\tilde f\tilde \phi_k\Gamma(f,\phi_k)\leq 2(k^2\norm{f}_\infty^2+L^2)m.
\end{align*}
Moreover, the locality of $\Gamma$ implies $\1_{G_{k+1}^c} \Gamma(f\varphi_k)  = 0$  and so $\int d\Gamma(f\phi_k)<\infty$. Since $G_k$ is relatively compact and $Q$ regular, we can pick $\psi_k\in D(Q)\cap L^\infty(X,m)$ such that $\psi_k \1_{G_k}=\1_{G_k}$.
Lemma \ref{local_space_ideal} implies $f\phi_k=f\phi_k\psi_k\in D(Q)$.

By the Sobolev-to-Lipschitz property there is a Lipschitz version
$\tilde f_k$ of $f\phi_k$. Since $\tilde f_{k+1}\1_{G_k}=\tilde f_k
\1_{G_k}$ pointwise, the limit $\tilde f(x)=\lim_{k\to\infty} \tilde
f_k(x)$ exists, is continuous and coincides with $f$ a.e. on
$\bigcup_k G_k=X$.
\end{proof}

The following lemma may well be of independent interest.  It shows that in the definition of $d_Q$ the space $D(Q)_\loc$ can be replaced by the larger $D(Q)^\bullet_\loc$ if $Q$ is regular.

\begin{lemma}\label{intrinsic_metric_weak}
Let $Q$ be a strongly local regular Dirichlet form on $L^2(X,m)$. If $f\in C(X)\cap D(Q)_\loc^\bullet$ with $\Gamma(f)\leq m$, then $f\in D(Q)_\loc$. In particular, the intrinsic metric $d_Q$ can be expressed as
\begin{align*}
d_Q(x,y)=\sup\{\abs{f(x)-f(y)}\colon f\in C_b(X)\cap D(Q)_\loc^\bullet,\,\Gamma(f)\leq m\}.
\end{align*}
\end{lemma}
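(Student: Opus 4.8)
The plan is to prove the stated inclusion first and then read off the formula for $d_Q$ by comparing the two classes of competitors defining the respective suprema. Throughout I would freely use the product rule, the Cauchy--Schwarz inequality and the contraction property for the energy measure $\Gamma$, together with regularity of $Q$ to produce compactly supported cutoffs in $D(Q)$.

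\emph{The main inclusion.} Let $f \in C(X) \cap D(Q)_\loc^\bullet$ with $\Gamma(f) \le m$ and fix a relatively compact open set $G \subset X$. To witness $f \in D(Q)_\loc$ I would produce $f_G \in D(Q)$ with $f\1_G = f_G\1_G$. By regularity of $Q$ choose a cutoff $\phi \in D(Q) \cap C_c(X)$ with $0 \le \phi \le 1$, $\phi \equiv 1$ on $G$ and compact support $K := \supp\phi$. Since $f$ is continuous and $K$ is compact, $f$ is bounded on $K$, say $\abs{f} \le M$ there, so that $f\phi = f_M\phi$ with $f_M := (f \wedge M)\vee(-M)$ a product of bounded functions. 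I would take $f_G := f\phi$ and show $f\phi \in D(Q)$.

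\emph{Showing $f\phi \in D(Q)$.} Because $D(Q)_\loc^\bullet$ is an algebra stable under normal contractions (see \cite{Kuw98}, Theorem~4.1), one has $f_M \in D(Q)_\loc^\bullet \cap L^\infty(X,m)$ and hence $f\phi = f_M\phi \in D(Q)_\loc^\bullet \cap L^\infty(X,m)$. Strong locality forces $\1_{K^c}\Gamma(f\phi) = 0$, while the product rule and Cauchy--Schwarz give $\Gamma(f\phi) \le 2\tilde f_M^2\,\Gamma(\phi) + 2\tilde\phi^2\,\Gamma(f_M)$. Using $\Gamma(f_M) \le \Gamma(f) \le m$, the finiteness $\Gamma(\phi)(X) = Q(\phi) < \infty$ and $m(K) < \infty$, this yields $\int d\Gamma(f\phi) \le 2M^2 Q(\phi) + 2m(K) < \infty$. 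Picking $\psi \in D(Q) \cap L^\infty(X,m)$ with $\psi \equiv 1$ on $K$ and applying Lemma~\ref{local_space_ideal} to the pair $(f\phi,\psi)$ then gives $f\phi = (f\phi)\psi \in D(Q)$. As $\phi \equiv 1$ on $G$ we get $f\1_G = (f\phi)\1_G$, so $f \in D(Q)_\loc$.

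\emph{The formula for $d_Q$ and the main obstacle.} I would first record that $D(Q)_\loc \subset D(Q)_\loc^\bullet$: fixing an exhaustion $(G_k)$ of $X$ by relatively compact open sets with $\overline{G_k} \subset G_{k+1}$, every $g \in D(Q) \cap C_c(X)$ has compact support inside some $G_k$, so $D(Q) \cap C_c(X) \subset \bigcup_k D(Q)_{G_k}$, and regularity makes this union dense in $D(Q)$; by Lemma~\ref{lemma:characterization nests in terms of capacity}, $(G_k)$ is then a nest of (open, hence) quasi-open sets, whence any $f \in D(Q)_\loc$ lies in $D_\loc(\{G_k\}) \subset D(Q)_\loc^\bullet$. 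Both inequalities now follow. For ``$\le$'', any competitor $f$ for the right-hand supremum lies in $C(X) \cap D(Q)_\loc^\bullet$ with $\Gamma(f) \le m$, hence in $D(Q)_\loc$ by the main inclusion, and so is admissible for $d_Q$. For ``$\ge$'', given an admissible $f \in D(Q)_\loc \cap C(X)$ for $d_Q$, the truncations $f_N := (f \wedge N)\vee(-N)$ lie in $C_b(X) \cap D(Q)_\loc \subset C_b(X) \cap D(Q)_\loc^\bullet$ with $\Gamma(f_N) \le m$, and $\abs{f_N(x)-f_N(y)} = \abs{f(x)-f(y)}$ once $N > \max(\abs{f(x)},\abs{f(y)})$. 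The crux of the whole argument is the step $f\phi \in D(Q)$, where the purely capacity-theoretic membership $f \in D(Q)_\loc^\bullet$ must be upgraded to genuine form membership; the energy bound $\Gamma(f) \le m$ together with strong locality is exactly what makes $f\phi$ have finite global energy so that Lemma~\ref{local_space_ideal} applies, while continuity of $f$ enters only through local boundedness on the compact set $K$.
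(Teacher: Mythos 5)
Your proposal is correct and takes essentially the same route as the paper's proof: both use regularity to produce a cutoff $\phi\in C_c(X)\cap D(Q)$, bound $\Gamma(f\phi)$ via the product rule and Cauchy--Schwarz (using local boundedness of the continuous $f$ and $\Gamma(f)\leq m$) to get $\int d\Gamma(f\phi)<\infty$, and then invoke Lemma~\ref{local_space_ideal} to conclude membership in $D(Q)$. The only differences are cosmetic: the paper multiplies by $\phi$ once more to get $f\phi^2\in D(Q)$ where you introduce an auxiliary $\psi$, and you spell out the truncation $f_M$, the inclusion $D(Q)_\loc\subset D(Q)_\loc^\bullet$, and the final cut-off argument, which the paper dismisses as standard.
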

\begin{proof}
Let $G\subset X$ be relatively compact and open. Since $Q$ is
regular, there exists $\phi\in C_c(X)\cap D(Q)$ such that $\1_G\leq
\phi\leq 1$. By the product rule and Cauchy-Schwarz inequality, we
have $\Gamma(f\phi)\leq 2f^2\Gamma(\phi)+2\phi^2\Gamma(f)$. Since
$f$ is locally bounded, the support of $\Gamma(\phi)$ is compact and
$\Gamma(f)\leq m$, it follows that $\int d\Gamma(f\phi)<\infty$. Now
we infer from Lemma \ref{local_space_ideal} that $f\phi^2\in D(Q)$,
and clearly, $f\phi^2|_G=f|_G$.

That one can restrict the optimization problem for $d_Q$ to bounded functions follows by a standard cut-off argument.
\end{proof}

\begin{proposition}\label{tau_strongly_local}
Let $Q_i$, $i\in\{1,2\}$, be regular irreducible local Dirichlet
forms on $L^2(X_i,m_i)$ with the Sobolev-to-Lipschitz property, and
assume that each  $d_{Q_i}$ induces the original topology on $X_i$.
Let $U\colon L^2(X_1,m_1)\lra L^2(X_2,m_2)$ be an order isomorphism
intertwining the associated semigroups with associated
transformation $\tau$.

Then there exist closed polar sets $N_i\subset X_i$, $i\in\{1,2\}$,
and a version $\tilde\tau$ of $\tau$ such that
\begin{align*}
\tilde\tau\colon X_2\setminus N_2\lra X_1\setminus N_1
\end{align*}
is a homeomorphism.
\end{proposition}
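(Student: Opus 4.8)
The plan is to upgrade the quasi-homeomorphism $\tilde\tau$ furnished by Theorem \ref{tau_quasi-homeo} to an honest homeomorphism by first showing that it is an isometry for the intrinsic metrics, and then passing to the metric completions, where local compactness of the regular spaces will force the discarded sets to be closed. I normalize $\norm U=1$ and write $\Gamma_i$ for $\Gamma_i^{(c)}$. Let $\tilde\tau$ be the version from Theorem \ref{tau_quasi-homeo}, continuous on the members of a regular nest of compact sets $(G_k)$ whose union $D_2:=\bigcup_k G_k$ is co-polar, and recall that $\{\tilde h=0\}$ is polar (Lemma \ref{h=0_polar}).

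The heart of the argument is to show that $\tilde\tau$ does not increase intrinsic distance. I would fix $f\in C_b(X_1)\cap D(Q_1)_\loc^\bullet$ with $\Gamma_1(f)\leq m_1$. By Lemma \ref{lemma:tau leaves local space invariant}, $f\circ\tau\in D(Q_2)_\loc^\bullet$ and $\widetilde{f\circ\tau}=\tilde f\circ\tilde\tau$ q.e. Since the forms are strongly local, Theorem \ref{transformation_decomposition} reads $\Gamma_1(f)=\tilde\tau_\#(\tilde h^2\Gamma_2(f\circ\tau))$, while Corollary \ref{transformation_measures} gives $\tilde\tau_\#(\tilde h^2 m_2)=m_1$. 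Combining these with $\Gamma_1(f)\leq m_1$, using that pushforward along the q.e.-bijection $\tilde\tau$ reflects order, and dividing by $\tilde h^2$ (positive off a polar set), yields $\Gamma_2(f\circ\tau)\leq m_2$. As $f\circ\tau$ is bounded, Lemma \ref{local_Sobolev-to-Lipschitz} provides a continuous version, so $g:=\widetilde{f\circ\tau}\in C_b(X_2)\cap D(Q_2)_\loc^\bullet$ is an admissible competitor in the formula of Lemma \ref{intrinsic_metric_weak} for $d_{Q_2}$, whence $\abs{g(y)-g(y')}\leq d_{Q_2}(y,y')$ for all $y,y'$. Since $g$ and $f\circ\tilde\tau$ are continuous and agree a.e.\ on the regular nest, they agree on $D_2$; taking the supremum over all such $f$ and invoking Lemma \ref{intrinsic_metric_weak} on $X_1$ then gives $d_{Q_1}(\tilde\tau(y),\tilde\tau(y'))\leq d_{Q_2}(y,y')$ for $y,y'\in D_2$.

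Applying the identical reasoning to $U^{-1}$ (whose transformation is $\tau^{-1}$ and whose scaling $H$ satisfies $H\circ\tau=1/h$) produces a co-polar set $D_1\subseteq X_1$ on which $\tilde\tau^{-1}$ is a contraction. On the co-polar set $D:=D_2\cap\tilde\tau^{-1}(D_1)$ the two inequalities combine to show that $\tilde\tau\colon D\lra\tilde\tau(D)$ is a bijective isometry for the intrinsic metrics, and in particular a homeomorphism, since $d_{Q_1},d_{Q_2}$ induce the original topologies. This already yields a homeomorphism between co-polar subsets; the remaining — and most delicate — point is to realize these subsets as complements of \emph{closed} polar sets.

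The hard part will be exactly this last passage, and here I would pass to the metric completions. The isometry $\tilde\tau\colon D\lra\tilde\tau(D)$ extends uniquely to an isometric bijection $\hat\tau\colon\widehat{X_2}\lra\widehat{X_1}$ of the $d_{Q_i}$-completions (using that $D$ and $\tilde\tau(D)$ are dense), along which I identify both completions with a single space $\widehat X$. Under this identification $X_1$ and $X_2$ are dense subsets of $\widehat X$ carrying their original topologies, and the crucial structural input is that, $Q_i$ being \emph{regular}, the spaces $X_i$ are locally compact; a locally compact, densely embedded subspace of a Hausdorff space is open in it, so $X_1$ and $X_2$ are open in $\widehat X$. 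Setting $N_2:=X_2\setminus X_1$ and $N_1:=X_1\setminus X_2$ (with intersections taken in $\widehat X$) therefore gives closed subsets of $X_2$ and $X_1$ on whose complements $\tilde\tau$ restricts to the identity of $\widehat X$, hence to a homeomorphism $X_2\setminus N_2\lra X_1\setminus N_1$. Finally $N_2\subseteq X_2\setminus D$ and $N_1\subseteq X_1\setminus D_1$ (because $\tilde\tau$ carries $D$ into $X_1\cap X_2$), so both are polar, and the resulting map agrees a.e.\ with $\tau$, so it is a version of it. The obstacle that this resolves is genuine: quasi-homeomorphy alone only yields homeomorphisms on the pieces of a nest, and it is the isometry property together with local compactness that lets one excise a single closed polar set.
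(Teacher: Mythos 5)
Your argument is correct in substance, but it takes a genuinely different route from the paper's. The paper never touches the intrinsic metric in this proof: it re-runs the Gelfand-character construction from the proof of Theorem \ref{tau_quasi-homeo}, with the algebra $\mathcal{L}$ of compactly supported Lipschitz functions on $X_1$ in place of a generic core. Theorems 4.9 and 3.5 of \cite{FLW14} give $\mathcal{L}\subset D(Q_1)$ with $\Gamma_1(f)\leq L^2m_1$ for $L$-Lipschitz $f$; Corollary \ref{transformation_measures} and Theorem \ref{transformation_decomposition} transport the energy bound to $f\circ\tau$; and the decisive use of the Sobolev-to-Lipschitz hypothesis is Lemma \ref{local_Sobolev-to-Lipschitz}, which upgrades every $f\circ\tau$, $f\in\mathcal{L}$, to a \emph{globally} continuous version. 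The closed polar set then appears directly as the common zero set $N_2=\{y\mid \widetilde{f\circ\tau}(y)=0\text{ for all }f\in\mathcal{L}\}$ (closed as an intersection of closed sets, polar by the argument of Lemma \ref{h=0_polar}); $\tilde\tau$ is defined on $X_2\setminus N_2$ via multiplicative functionals and Stone--Weierstra{\ss}, continuity comes from the bump functions $(1-\frac1r d(x,\cdot))_+\in\mathcal{L}$, and the symmetric argument for $U^{-1}$ finishes. You instead prove the isometry statement first — essentially the content of Theorem \ref{isometry_local}, and without circularity, since your inputs (Theorem \ref{tau_quasi-homeo}, Theorem \ref{transformation_decomposition}, Lemmas \ref{lemma:tau leaves local space invariant}, \ref{local_Sobolev-to-Lipschitz} and \ref{intrinsic_metric_weak}) do not depend on the Proposition — and then extract the closed polar sets topologically via completion and the fact that a locally compact dense subspace of a Hausdorff space is open; that fact is correct and exploits the local compactness built into regularity, which the paper exploits differently (through $C_0(X_1)$). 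What your route buys is that Theorem \ref{isometry_local} becomes an immediate corollary; what the paper's route buys is avoiding completions, extended-metric bookkeeping, and polar-set accounting across the identification.

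Two compressed steps deserve a word. First, combining the two one-sided inequalities on $D=D_2\cap\tilde\tau^{-1}(D_1)$ requires the pointwise identity $\tilde\tau^{-1}\circ\tilde\tau=\id$, which a priori holds only q.e.; you should shrink $D$ by a further polar set, exactly as in the proof of Theorem \ref{tau_quasi-homeo}. Second, your stated inclusion $N_1\subseteq X_1\setminus D_1$ does not follow from the parenthetical justification "$\tilde\tau$ carries $D$ into $X_1\cap X_2$", which only yields $N_1\subseteq X_1\setminus\tilde\tau(D)$ — and note $X_1\setminus D_1\subseteq X_1\setminus\tilde\tau(D)$, so the stated inclusion is the \emph{stronger} one. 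It can be salvaged: either show $X_1\setminus\tilde\tau(D)$ is polar using that $\tilde\tau$ maps nests to nests (Proposition \ref{U_D_loc}), or use the contraction property of $\tilde\tau^{-1}$ on $D_1$ to see that for $x\in D_1$ and $x_n\in\tilde\tau(D)$ with $x_n\to x$ one has $\tilde\tau^{-1}(x_n)\to\tilde\tau^{-1}(x)\in X_2$, so that $D_1$ lies in $X_2$ under the identification and hence $N_1\subseteq X_1\setminus D_1$ after all. Both repairs are one or two lines, so neither constitutes a genuine gap.
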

\begin{proof}
The proof is similar to that of Theorem \ref{tau_quasi-homeo}.
Denote by $\mathcal{L}$ the Lipschitz functions on $X_1$ with
compact support. By Theorem 4.9 of \cite{FLW14} we have
$\mathcal{L}\subset D(Q_1)_\loc$ and $\Gamma_1(f)\leq L^2 m_1$
whenever  $f\in\mathcal{L}$ is $L$-Lipschitz. As functions from
$D(Q_1)_\loc$ with compact support belong to $D(Q)$ by  Theorem 3.5
of \cite{FLW14},  we then also find $\mathcal{L}\subset D(Q_1)$.

By Corollary~\ref{transformation_measures} and
Theorem~\ref{transformation_decomposition} we have $f\circ\tau\in
D(Q_2)_\loc^\bullet$ and $\Gamma_2(f\circ\tau)\leq L m_2$ for all
$f\in \mathcal{L}$.  By Lemma \ref{local_Sobolev-to-Lipschitz},
$f\circ\tau$ has a continuous version $\widetilde{f\circ\tau}$ for
all $f\in \mathcal{L}$.

Let $N_2=\{y\in Y\mid \widetilde{f\circ\tau}(y)=0\text{ for all
}f\in\mathcal{L}\}$. Then $N_2$ is closed as the intersection of
closed sets. It is polar by the same arguments as in Lemma
\ref{h=0_polar}.

By the Stone-Weierstraß theorem, $\mathcal{L}$ is dense in
$C_0(X_1)$. Just as in the proof of Theorem \ref{tau_quasi-homeo} we
get a map $\tilde\tau\colon X_2\setminus N_2\lra X_1$ such that
$\widetilde{f\circ\tau}=f\circ\tilde\tau$ for all $f\in
\mathcal{L}$. Continuity follows similarly since for every $x\in
X_1$ the bump function $(1-\frac 1 rd(x,\cdot))_+$ is in
$\mathcal{L}$ for all $r>0$. An application of the same arguments to
$U^{-1}$ with its transformation $\tau^{-1}$ yields the claim.
\end{proof}

\begin{theorem}[Isometric transformation]\label{isometry_local}
Let $Q_i$, $i\in\{1,2\}$, be strongly local regular irreducible
Dirichlet forms on $L^2(X_i,m_i)$ with the Sobolev-to-Lipschitz
property. Assume that each $d_{Q_i}$ induces the original topology
on $X_i$ and denote by $\overline{X_i}$ the completion of $X_i$
w.r.t. $d_{Q_i}$.

If $U\colon L^2(X_1,m_1)\lra L^2(X_2,m_2)$ is an order
isomorphism intertwining $Q_1$ and $Q_2$, then the
associated transformation $\tau$ has a version $\tilde\tau$ that extends to an
isometry from $\overline{X_2}$ onto $\overline{X_1}$. In particular, if $Q_1$ and $Q_2$ are strictly local, then $(X_1,d_{Q_1})$ and $(X_2,d_{Q_2})$ are isometric.
\end{theorem}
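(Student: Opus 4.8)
The plan is to combine the homeomorphism already produced in Proposition~\ref{tau_strongly_local} with the transformation behaviour of the energy measure from Theorem~\ref{transformation_decomposition}, the decisive point being that the scaling $\tilde h$ cancels out at the level of the intrinsic metric. Normalizing $\norm{U}=1$ as in the proof of Theorem~\ref{transformation_decomposition}, I would first invoke Proposition~\ref{tau_strongly_local} to fix closed polar sets $N_i\subset X_i$ and a version $\tilde\tau$ of $\tau$ that restricts to a homeomorphism $\tilde\tau\colon X_2\setminus N_2\lra X_1\setminus N_1$. Since each $m_i$ has full support and $N_i$ is polar, hence $m_i$-null, the set $X_i\setminus N_i$ is dense in $X_i$ and therefore in $\overline{X_i}$. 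The whole statement thus reduces to showing that $\tilde\tau$ is an isometry between $(X_2\setminus N_2,d_{Q_2})$ and $(X_1\setminus N_1,d_{Q_1})$, after which extension to the completions is automatic.

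For the isometry, the central observation is that the admissibility condition $\Gamma(f)\leq m$ defining $d_Q$ is preserved under composition with $\tilde\tau$. Let $f$ be admissible for $d_{Q_1}$ in the sense of Lemma~\ref{intrinsic_metric_weak}, i.e.\ $f\in C_b(X_1)\cap D(Q_1)_\loc^\bullet$ with $\Gamma_1(f)\leq m_1$. By Lemma~\ref{lemma:tau leaves local space invariant} the element $f\circ\tau$ lies in $D(Q_2)_\loc^\bullet$ and has continuous version $f\circ\tilde\tau$ (as $f$ is continuous), and the strongly local case of Theorem~\ref{transformation_decomposition} together with the measure identity of Corollary~\ref{transformation_measures} gives
\begin{align*}
\tilde\tau_\#\bigl(\tilde h^2\,\Gamma_2(f\circ\tilde\tau)\bigr)=\Gamma_1(f)\leq m_1=\tilde\tau_\#\bigl(\tilde h^2\, m_2\bigr).
\end{align*}
Since $\tilde\tau$ is a Borel bijection off the polar sets and none of these measures charge $N_i$, pushforward along $\tilde\tau$ is injective on such measures, so $\tilde h^2\,\Gamma_2(f\circ\tilde\tau)\leq \tilde h^2\, m_2$; dividing by $\tilde h^2$, which is strictly positive quasi-everywhere by Lemma~\ref{h=0_polar} and hence almost everywhere for both measures, yields $\Gamma_2(f\circ\tilde\tau)\leq m_2$. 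By the localized Sobolev-to-Lipschitz property (Lemma~\ref{local_Sobolev-to-Lipschitz}) the function $f\circ\tilde\tau$ then has a continuous bounded version, so it is admissible for $d_{Q_2}$, and this version agrees with $f\circ\tilde\tau$ on the dense set $X_2\setminus N_2$.

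Feeding every admissible $f$ for $Q_1$ into the definition of $d_{Q_2}$ and taking the supremum gives, for $x,y\in X_2\setminus N_2$,
\begin{align*}
d_{Q_1}(\tilde\tau(x),\tilde\tau(y))=\sup_f\abs{f(\tilde\tau(x))-f(\tilde\tau(y))}\leq d_{Q_2}(x,y).
\end{align*}
Applying the identical reasoning to $U^{-1}$, whose transformation is $\tilde\tau^{-1}$, yields the reverse inequality, so $\tilde\tau$ is an isometry on $X_2\setminus N_2$. Being an isometry between dense subsets of $\overline{X_2}$ and $\overline{X_1}$, it extends uniquely to an isometric embedding $\overline{X_2}\lra\overline{X_1}$; its image is complete, hence closed, and contains the dense set $X_1\setminus N_1$, so the extension is onto. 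If in addition $Q_1$ and $Q_2$ are strictly local, then $d_{Q_i}$ is already complete and induces the topology, whence $\overline{X_i}=X_i$ and the extended isometry is a bijection $X_2\lra X_1$, giving the asserted isometry of $(X_1,d_{Q_1})$ and $(X_2,d_{Q_2})$.

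I expect the main obstacle to lie in the second paragraph: showing that $\tilde h$ genuinely cancels requires the precise compatibility of the two transformation formulas (the energy-measure identity of Theorem~\ref{transformation_decomposition} and the measure identity of Corollary~\ref{transformation_measures}) together with injectivity of pushforward off the polar exceptional sets, and then upgrading the bound $\Gamma_2(f\circ\tilde\tau)\leq m_2$ for a mere $D(Q_2)_\loc^\bullet$-function into an honest continuous competitor for $d_{Q_2}$ via the localized Sobolev-to-Lipschitz property. The bookkeeping with the closed polar sets $N_i$---ensuring continuous versions extend across them and that density passes to the completions---is the other delicate point, but it is routine once the isometry off $N_i$ is in hand.
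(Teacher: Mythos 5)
Your proposal is correct and follows essentially the same route as the paper: Proposition~\ref{tau_strongly_local} for the homeomorphism off polar sets, Lemma~\ref{lemma:tau leaves local space invariant} together with Theorem~\ref{transformation_decomposition} and Corollary~\ref{transformation_measures} to transport the admissibility condition $\Gamma_1(f)\leq m_1$ into $\Gamma_2(f\circ\tau)\leq m_2$, Lemma~\ref{local_Sobolev-to-Lipschitz} for a continuous version, Lemma~\ref{intrinsic_metric_weak} for the metric comparison, and a symmetry-plus-density argument to conclude. The only difference is that you spell out the cancellation of $\tilde h$ via injectivity of the pushforward, a step the paper compresses into a citation of the two transformation formulas (and note the normalization $\norm{U}=1$ is not even needed, since both formulas carry the same factor $\norm{U}^2$).
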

\begin{proof}
For $f\in C_b(X_1)\cap D(Q_1)_\loc^\bullet$ with $\Gamma_1(f)\leq m_1$ we have
$f\circ\tau\in D(Q_2)_\loc^\bullet\cap L^\infty(X_2,m_2)$ by Lemma \ref{lemma:tau leaves local space invariant} and $\Gamma_2(f\circ\tau)\leq
m_2$ by Corollary \ref{transformation_measures} and Theorem \ref{transformation_decomposition}. Now, chose $\tilde \tau$ according to Proposition
\ref{tau_strongly_local}. Then,  $f\circ\tilde\tau$ is continuous on
$X_2\setminus N_2$ by Proposition \ref{tau_strongly_local}.
Furthermore, $f\circ\tilde\tau$ has a continuous extension to $X_2$
by Lemma \ref{local_Sobolev-to-Lipschitz}.

Using Lemma \ref{intrinsic_metric_weak}, we see that for all $x,y\in X_2\setminus N_2$ we have
\begin{align*}
d_{Q_1}(\tilde\tau(x),\tilde\tau(y))&=\sup\{ \abs{f(\tilde\tau(x))-f(\tilde\tau(y))}\colon f\in C_b(X_1)\cap D(Q_1)_\loc^\bullet,\,\Gamma_1(f)\leq m_1\}\\
&\leq \sup\{\abs{g(x)-g(y)}\colon g\in C_b(X_2)\cap D(Q_2)_\loc^\bullet,\,\Gamma_2(g)\leq m_2\}\\
&=d_{Q_2}(x,y).
\end{align*}
The converse inequality follows by exchanging the roles of $Q_1$ and $Q_2$.

The sets $N_i$ from Proposition \ref{tau_strongly_local} are in particular null sets, hence $X_i\setminus N_i$ is dense in $\overline{X_i}$ since $m_i$ has full support. Thus $\tilde\tau$ extends to an isometry from $\overline{X_2}$ onto $\overline{X_1}$.
\end{proof}

\begin{remark}
\begin{itemize}
 \item The previous theorem can be seen as an  extension of the main result
of \cite{ABE12} (which deals with manifolds)  to strongly local regular
Dirichlet forms. In fact, even  in the smooth setting, our result
breaks  new ground as it covers \emph{weighted} Riemannian
manifolds. Let us note, however, that in order to obtain an isometry $X_2 \to X_1$ (instead of their completions), our assumptions are slightly
more restrictive than those from \cite{ABE12}. We assume completeness of the space, whereas in the mentioned work only some form of regularity of the
boundary of the metric completion is assumed. Our result should also hold true in this more general setting  with the regularity condition of the metric boundary adapted to  Dirichlet forms.
\item It is clear that some form of regularity of the space is needed in oder to obtain an isometry on the underlying spaces and not only on completions.  Indeed, for a given regular Dirichlet form one can remove a polar set from the underlying space and consider its trace on the remaining part. By the polarity of the set the generated semigroups coincide  but the underlying spaces do not.
\end{itemize}

\end{remark}

\section{Non-local Dirichlet forms and recurrence}\label{sec_non-local}
In this section we study intertwining of general (i.e. not
necessarily local) regular Dirichlet forms. In this context metric
considerations are also possible due to the  recently developed
theory of intrinsic metric for general regular Dirichlet forms
\cite{FLW14}. The considerations of \cite{FLW14} do not  provide one
intrinsic metric but rather a whole family of  intrinsic metrics.
The strongly local case is then distinguished by there being one
largest intrinsic metric (referred to as `the' intrinsic metric in
the previous section). For general regular Dirichlet forms the
intrinsic metrics will in general not be  compatible and we will
have to deal with the whole family. As main result in this section
we establish a bijective correspondence between the families of
intrinsic metrics for intertwined Dirichlet forms (Theorem
\ref{isometry_non-local}) under suitable conditions.

\medskip

We say that a subset $F$ of $L^0(X,m)$ is a \emph{core of bump
functions} if every $f\in F$ has a continuous representative and
$F\cap C_0(X)$ is dense in $C_0(X)$.

\begin{lemma}
Let $Q_i$ be recurrent, irreducible, quasi-regular Dirichlet forms
on $L^2(X_i,m_i)$, $i\in\{1,2\}$, and let $U\colon L^2(X_1,m_1)\lra
L^2(X_2,m_2)$ be an order isomorphism intertwining the associated
semigroups. Assume that there exist cores of bump functions
$F_i\subset L^2(X_i,m_i)$, $i\in\{1,2\}$, such that $UF_1=F_2$.
Then, there  are closed, polar sets $N_i\subset X_i$, $i\in\{1,2\}$,
and a version $\tilde\tau$ of $\tau$ such that
\begin{align*}
\tilde\tau\colon X_2\setminus N_2\lra X_1\setminus N_1
\end{align*}
is a homeomorphism.
\end{lemma}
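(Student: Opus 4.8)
The plan is to run the point-evaluation construction from the proof of Theorem~\ref{tau_quasi-homeo} (as was also done in Proposition~\ref{tau_strongly_local}), using the core of bump functions to upgrade the quasi-homeomorphism to a genuine homeomorphism after deleting closed polar sets. As a first step I would reduce to the regular case exactly as in Theorem~\ref{tau_quasi-homeo}, so that $X_1,X_2$ are locally compact and $C_0(X_i)$ both separates points of $X_i$ and induces its topology; here one must check that the quasi-homeomorphisms effecting this reduction carry cores of bump functions to cores of bump functions. Recurrence now enters through Corollary~\ref{recurrent_h_const}: the scaling $h$ is a.e.\ equal to a constant $c>0$ (and the scaling of $U^{-1}$ is then the constant $1/c$), so Lemma~\ref{h=0_polar} yields $\widetilde{Uf}=c\,\widetilde{f\circ\tau}$ q.e. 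In particular, for every $f\in F_1$ the function $Uf\in F_2$ has a continuous representative $\overline{Uf}$, and being quasi-continuous and q.e.-unique it agrees q.e.\ with $\widetilde{Uf}$.

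The heart of the argument is the Gelfand construction, and the main obstacle is that $F_1$ need not be an algebra, so it cannot be fed directly into Gelfand--Naimark theory. I would resolve this by passing to the algebra $\A$ generated by $F_1\cap C_0(X_1)$: each $\phi\in\A$ lies in $L^2\cap L^\infty(X_1,m_1)$, so $U\phi=c\,(\phi\circ\tau)$ is defined, and since products of continuous representatives are continuous representatives of the products, $U\phi$ again admits a continuous representative $\overline{U\phi}$ with $\overline{U(\phi\psi)}=\tfrac1c\,\overline{U\phi}\cdot\overline{U\psi}$. Hence for each $y\in X_2$ the map $\chi_y\colon\phi\mapsto\tfrac1c\,\overline{U\phi}(y)$ is a multiplicative linear functional on $\A$ satisfying $\abs{\chi_y(\phi)}\le\norm{\phi}_\infty$, where the bound comes from $\abs{U\phi}\le c\norm{\phi}_\infty$ a.e.\ and hence everywhere by continuity and full support of $m_2$. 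Since $\A$ is dense in $C_0(X_1)$, $\chi_y$ extends to a character of $C_0(X_1)$; setting $N_2=\{y\in X_2\mid\overline{Uf}(y)=0\text{ for all }f\in F_1\cap C_0(X_1)\}$, this character is nonzero precisely for $y\notin N_2$, and Gelfand--Naimark theory then provides a unique $\tilde\tau(y)\in X_1$ with $\overline{Uf}(y)=c\,f(\tilde\tau(y))$ for all $f\in C_0(X_1)$.

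It remains to collect the properties of $\tilde\tau$. The set $N_2$ is closed as an intersection of zero sets of continuous functions, and it is polar: comparing with the quasi-homeomorphism $\tilde\tau$ from Theorem~\ref{tau_quasi-homeo} and using Lemma~\ref{lemma:tau leaves local space invariant} we have $\chi_y(f)=f(\tilde\tau(y))$ q.e., and since no point of the locally compact space $X_1$ annihilates all of $C_0(X_1)$, the set $N_2$ is disjoint from a co-polar set and is therefore polar. Continuity of $\tilde\tau$ on $X_2\setminus N_2$ follows because $f\circ\tilde\tau=\tfrac1c\,\overline{Uf}$ is continuous for every $f\in C_0(X_1)$ and $C_0(X_1)$ induces the topology of $X_1$; moreover $\tilde\tau=\tau$ a.e., since the two maps agree after composition with a point-separating dense family, q.e. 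Finally I would run the identical construction for $U^{-1}$ (with constant scaling $1/c$ and $U^{-1}F_2=F_1$) to obtain a closed polar set $N_1\subset X_1$ and a continuous version $\tilde\sigma$ of $\tau^{-1}$ on $X_1\setminus N_1$. As $\tilde\sigma\circ\tilde\tau=\tau^{-1}\circ\tau=\id$ a.e.\ and both maps are continuous off polar sets, they are mutually inverse q.e.; after enlarging the exceptional sets to the closed polar sets $N_2\cup\tilde\tau^{-1}(N_1)$ and $N_1\cup\tilde\sigma^{-1}(N_2)$ exactly as in the closing bookkeeping of Theorem~\ref{tau_quasi-homeo}, the map $\tilde\tau\colon X_2\setminus N_2\lra X_1\setminus N_1$ is a homeomorphism with inverse $\tilde\sigma$. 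The two points I expect to require genuine care are the algebra issue in the Gelfand step (handled via $\A$ together with the constancy of $h$) and the verification that the reduction to the regular case preserves the core-of-bump-functions hypothesis.
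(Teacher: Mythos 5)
Your proposal follows, in substance, the paper's own route: the paper's proof is three lines long --- Corollary~\ref{recurrent_h_const} makes the scaling constant, $UF_1=F_2$ gives continuous versions of $f\circ\tau$ for $f\in F_1\cap C_0(X_1)$, and then one proceeds verbatim as in Proposition~\ref{tau_strongly_local}, i.e., the Gelfand point-evaluation construction of Theorem~\ref{tau_quasi-homeo}, the common zero set $N_2$, polarity as in Lemma~\ref{h=0_polar}, and the symmetric argument for $U^{-1}$. Your treatment of the algebra issue is a genuine improvement in precision: $F_1\cap C_0(X_1)$ need not be an algebra (unlike the Lipschitz class $\mathcal{L}$ in Proposition~\ref{tau_strongly_local} or the subalgebra $D$ chosen in Theorem~\ref{tau_quasi-homeo}), and the paper's ``proceed exactly as in'' silently glosses over this. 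Your fix via the generated algebra $\A$ works precisely because $U$ is a weighted composition operator with constant weight, so that $U(\phi\psi)=\tfrac1c(U\phi)(U\psi)$ a.e.\ and products of continuous representatives represent the product; equivalently, one can check directly that the bounded extension of $\chi_y$ to $C_0(X_1)$ is automatically multiplicative, the a.e.\ identities upgrading to everywhere identities by full support of $m_2$.

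However, you should drop your first step. The paper does not reduce to the regular case here, and the reduction via the quasi-homeomorphism of \cite{CMR94} does not interact well with the hypothesis: a quasi-homeomorphism is only a homeomorphism between the elements of nests, so it carries continuous functions to merely quasi-continuous ones; the transported family $\{f\circ\Phi^{-1}\mid f\in F_i\}$ will in general not consist of functions with continuous representatives on the locally compact model space, let alone be dense in $C_0$ of it --- the check you flag as ``requiring genuine care'' in fact fails. Fortunately the reduction is unnecessary: the core-of-bump-functions hypothesis is exactly the substitute for regularity, directly furnishing on the original spaces the dense family of continuous representatives that Theorem~\ref{tau_quasi-homeo} extracted from regularity, and nothing in the remainder of your argument (Gelfand step, closedness and polarity of $N_2$, continuity of $\tilde\tau$ from $C_0(X_1)$ inducing the topology, the symmetric construction for $U^{-1}$, and the final bookkeeping) uses the reduction beyond the local compactness already implicit in the $C_0(X_i)$-density demanded by the hypothesis. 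With that step excised, your proof is correct and coincides with the paper's, with the $\A$-step filling a real gap in the paper's terse write-up.
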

\begin{proof}
By Corollary \ref{recurrent_h_const}, the scaling $h$ is constant. Since $UF_1=F_2\subset C(X_2)$, we have $f\circ\tau\in C(X_2)$ for all $f\in F_1\cap C_0(X_1)$. From here on we can proceed exactly as in Proposition \ref{tau_strongly_local}.
\end{proof}

The condition on the existence of suitable cores of bump functions in the proposition above does not only depend on the Dirichlet forms, but also on the order isomorphism. However, for several classes of Dirichlet forms, this condition is satisfied for all intertwining order isomorphism.
\begin{example}
If $(X_i,b_i,c_i,m_i)$, $i\in\{1,2\}$, are connected weighted graphs
and $Q_i$ the associated Dirichlet forms with Dirichlet boundary
conditions (see \cite{KL12}), then $F_i:=D(Q_i)$ is a core of bump
functions since $C_c(X_i)\subset D(Q_i)\subset C(X_i)$. By
definition, any order isomorphism intertwining $Q_1$ and $Q_2$ maps
$F_1$ to $F_2$.
\end{example}
\begin{example}
Let $(M_i,g_i)$ be Riemannian manifolds, $\alpha\in(0,1]$, and $Q_i$ the Dirichlet form generated by the fractional Laplacian $(-\Delta_i)^{\alpha/2}$ on $M_i$. The space $F_i=\bigcap_{k=1}^\infty D((-\Delta_i)^{n\alpha/2})$ is a core of bump functions by the Sobolev embedding theorem. Moreover, any order isomorphism intertwining $Q_1$ and $Q_2$ maps $F_1$ to $F_2$.
\end{example}

Let $Q$ be a regular Dirichlet form on $L^2(X,m)$. For an open subset $V$ of $X$ define the Dirichlet form $Q^{(V)}$ on $L^2(V,m)$ as the closure of the restriction of $Q$ to $D(Q)\cap C_c(V)$. If $X\setminus V$ is polar, then
\begin{align*}
Q(f)=\int_V d\Gamma^{(c)}(f)+\int_{(V\times V)\setminus \Delta_V}(f(x)-f(y))^2\,dJ(x,y)+\int_V f^2\,dk
\end{align*}
for all $f\in D(Q^{(V)})\cap C_c(V)$. From the uniqueness of the Beurling-Deny decomposition we can infer that the strongly local energy measure, jump measure and killing measure for $Q^{(V)}$ are given by the restrictions the measures for $Q$ to $V$ resp. $(V\times V)\setminus \Delta_V$.

We now turn to intrinsic metrics in this context  following the
theory developed in \cite{FLW14}. As mentioned already, when dealing
with non-local Dirichlet forms, there is not a distinguished
intrinsic metric, but a family of intrinsic metrics. Let us briefly
recall the relevant definitions.

Recall that the local form domain $D(Q)_\loc$ of a regular Dirichlet form is the set of all functions $f\colon X\lra\IR$ such that for every open, relatively compact $G\subset X$ there exists $f_G\in D(Q)$ such that $f\1_G=f_G \1_G$.

Let $J$ be the
jump measure of $Q$ in the Beurling-Deny decomposition. The space
$D(Q)_\loc^\ast$ is the set of all functions $f\in D(Q)_\loc$ such
that
\begin{align*}
\int_{(K\times X)\setminus \Delta_X}(\tilde f(x)-\tilde f(y))^2\,dJ(x,y)<\infty
\end{align*}
for all compact $K\subset X$.

For $f\in D(Q)_\loc^\ast$ the Radon measure $\Gamma^{(b)}(f)$ is defined by
\begin{align*}
\Gamma^{(b)}(f)(E)=\int_{(E\times X)\setminus \Delta_X}(\tilde u(x)-\tilde u(y))^2\,dJ(x,y)
\end{align*}
for $E\subset X$ measurable.

A pseudo-metric $d\colon X\times X\lra[0,\infty]$ is called \emph{intrinsic metric} for $Q$ if there are Radon measure $m^{(b)}$ and $m^{(c)}$ with $m^{(b)}+m^{(c)}\leq m$ such that for all $A\subset X$ and all $M>0$ the function $d_A=d(\,\cdot\,,A)$ satisfies
\begin{itemize}
\item $d_A\wedge M\in D(Q)_\loc^\ast\cap C(X)$,
\item $\Gamma^{(b)}(d_A\wedge T)\leq m^{(b)}$,
\item $\Gamma^{(c)}(d_A\wedge T)\leq m^{(c)}$.
\end{itemize}
The set of all intrinsic metrics for $Q$ is denoted by $\mathcal I(Q)$.

If $Q$ is strongly local, then the metric $d_Q$ discussed in Section \ref{sec_strongly_local} is an intrinsic metric in the sense of the definition above, and every intrinsic metric for $Q$ is pointwise dominated by $d_Q$ (see \cite{FLW14}, Theorem 6.1).

\begin{theorem}\label{isometry_non-local}
Let $Q_i$ be recurrent, irreducible, regular Dirichlet forms on
$L^2(X_i,m_i)$, $i \in \{1,2\}$. Let $U\colon L^2(X_1,m_1)\lra
L^2(X_2,m_2)$ be an order isomorphism intertwining the associated
semigroups and denote by $\tau$ the associated transformation.
Assume that there exist cores of bump functions $F_i\subset
L^2(X_i,m_i)$ such that $U F_1=F_2$. Let $N_i\subset X_i$ be closed
polar sets, $V_i=X_i\setminus N_i$, and $\tilde\tau$ a version of
$\tau$ such that $\tilde\tau\colon V_2\lra V_1$ is a homeomorphism.
Then
\begin{align*}
\Phi\colon\mathcal I(Q_1^{(V_1)})\lra \mathcal I(Q_2^{(V_2)}),\,d\mapsto d(\tilde\tau(\cdot),\tilde\tau(\cdot))
\end{align*}
is a bijection.
\end{theorem}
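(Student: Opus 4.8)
The plan is to exhibit an explicit two‑sided inverse for $\Phi$ and thereby reduce the whole statement to a single assertion: that pulling a pseudo‑metric back along $\tilde\tau$ sends intrinsic metrics to intrinsic metrics. Since $\tilde\tau\colon V_2\lra V_1$ is a homeomorphism, the natural candidate inverse is $\Psi(d')=d'(\tilde\tau^{-1}(\cdot),\tilde\tau^{-1}(\cdot))$, and at the purely set‑theoretic level of pseudo‑metrics one has $\Psi\circ\Phi=\id$ and $\Phi\circ\Psi=\id$ because $\tilde\tau\circ\tilde\tau^{-1}$ and $\tilde\tau^{-1}\circ\tilde\tau$ are identities. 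As the hypotheses are symmetric under interchanging the indices $1,2$ (replace $U$ by $U^{-1}$, whose transformation is a version of $\tilde\tau^{-1}$, and interchange the cores $F_1,F_2$), it suffices to prove that $\Phi$ is well defined, i.e. maps $\mathcal{I}(Q_1^{(V_1)})$ into $\mathcal{I}(Q_2^{(V_2)})$; the same argument then shows that $\Psi$ is well defined, and bijectivity follows formally.

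The conceptual heart is that recurrence forces all the structural data to transform by one and the same constant. By Corollary \ref{recurrent_h_const} the scaling $h$ equals a constant $c>0$ and $\tau_\# m_2=\alpha m_1$ for some $\alpha>0$; combined with Corollary \ref{transformation_measures} this gives $\norm{U}^2=c^2\alpha$. Removing the closed polar sets $N_i$ leaves the semigroups unchanged, so $U$ also intertwines the restricted forms $Q_i^{(V_i)}$ with the same constant scaling, and Theorem \ref{transformation_decomposition} applies to them. Substituting $\tilde h\equiv c$ and inverting the resulting identities along the homeomorphism $\tilde\tau$ yields
\begin{align*}
\Gamma_2^{(c)}(f\circ\tilde\tau)=\alpha\,(\tilde\tau^{-1})_\#\Gamma_1^{(c)}(f),\qquad
J_2=\alpha\,(\tilde\tau^{-1}\times\tilde\tau^{-1})_\# J_1,\qquad
m_2=\alpha\,(\tilde\tau^{-1})_\# m_1 .
\end{align*}
Thus $\Gamma^{(c)}$, $J$ and $m$ all scale by the single factor $\alpha$ under pullback along $\tilde\tau$; this uniformity — which fails without recurrence, where the spatially varying weights $\tilde h^2$ and $\tilde h\otimes\tilde h$ would intervene — is precisely what lets the defining conditions for an intrinsic metric survive the transport.

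With this in hand I would verify well‑definedness directly. Given $d\in\mathcal{I}(Q_1^{(V_1)})$ with witnessing Radon measures $m_1^{(b)},m_1^{(c)}$ satisfying $m_1^{(b)}+m_1^{(c)}\leq m_1$, set $m_2^{(b)}=\alpha\,(\tilde\tau^{-1})_\# m_1^{(b)}$ and $m_2^{(c)}=\alpha\,(\tilde\tau^{-1})_\# m_1^{(c)}$; monotonicity of pushforward gives $m_2^{(b)}+m_2^{(c)}\leq m_2$. For $A'\subset V_2$ and $M>0$ one has the pointwise identity $\Phi(d)_{A'}\wedge M=(d_{\tilde\tau(A')}\wedge M)\circ\tilde\tau$, so with $f=d_{\tilde\tau(A')}\wedge M\in D(Q_1^{(V_1)})_\loc^\ast\cap C(V_1)$ it remains to control $f\circ\tilde\tau$. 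Continuity is immediate from the homeomorphism property, and because $f$ is already continuous, $f\circ\tilde\tau$ is its own quasi‑continuous version, so $\widetilde{f\circ\tilde\tau}=\tilde f\circ\tilde\tau$ holds pointwise. Membership $f\circ\tilde\tau\in D(Q_2^{(V_2)})_\loc$ follows since $\tilde\tau$ matches relatively compact open sets and, by Corollary \ref{corollary:intertwined forms} for the restricted forms, composition with $\tau$ (which is $c^{-1}U$) carries a local $D(Q_1^{(V_1)})$‑representative of $f$ on $\tilde\tau(G')$ to a $D(Q_2^{(V_2)})$‑representative of $f\circ\tilde\tau$ on $G'$. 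A change of variables in the integral defining $\Gamma^{(b)}$, using the $J$‑formula above and $\widetilde{f\circ\tilde\tau}=\tilde f\circ\tilde\tau$, gives $\Gamma_2^{(b)}(f\circ\tilde\tau)=\alpha\,(\tilde\tau^{-1})_\#\Gamma_1^{(b)}(f)\leq m_2^{(b)}$; finiteness on compacta then yields both the $\ast$‑condition and the $\Gamma^{(b)}$‑bound, and the analogous computation with the energy‑measure formula gives $\Gamma_2^{(c)}(f\circ\tilde\tau)\leq m_2^{(c)}$. Hence $\Phi(d)\in\mathcal{I}(Q_2^{(V_2)})$.

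The main obstacle is not the bijectivity, which is purely formal once well‑definedness is in place, but the faithful transport of the condition $D(Q)_\loc^\ast\cap C$ across $\tilde\tau$ in combination with the passage to the restricted forms. One must confirm that $U$ genuinely intertwines the \emph{restricted} forms $Q_i^{(V_i)}$ with the same constant scaling — so that Theorem \ref{transformation_decomposition}, Corollary \ref{corollary:intertwined forms} and Lemma \ref{lemma:tau leaves local space invariant} are all available in this restricted setting — that the topological local form domain $D(Q)_\loc$ is respected by composition with $\tau$, and that the relevant quasi‑continuous versions match as $\widetilde{f\circ\tilde\tau}=\tilde f\circ\tilde\tau$ so that the measure‑theoretic change of variables is legitimate. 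Once these points are secured, every defining condition for an intrinsic metric transforms by the single constant $\alpha$, and the claimed bijection is established.
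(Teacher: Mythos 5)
Your proposal is correct and follows essentially the same route as the paper: reduce bijectivity to well-definedness of $\Phi$ by the symmetry $U \leftrightarrow U^{-1}$, use recurrence (Corollary \ref{recurrent_h_const}) to make the scaling constant, and transport $\Gamma^{(c)}$, $J$ and $m$ via Theorem \ref{transformation_decomposition}, so that every datum in the definition of an intrinsic metric scales by one uniform constant. If anything, your bookkeeping is more careful than the paper's terse proof: you correctly write the witnessing measures as pushforwards $m_2^{(b)}=\alpha\,(\tilde\tau^{-1})_\# m_1^{(b)}$, $m_2^{(c)}=\alpha\,(\tilde\tau^{-1})_\# m_1^{(c)}$ (the paper's $m_2^{(b)}=\alpha^2 m_1^{(b)}$, $m_2^{(c)}=m_1^{(c)}$ omits the pushforward along $\tilde\tau^{-1}$ and implicitly normalizes $\norm{U}$), and you make explicit the point, left implicit in the paper, that since the $N_i$ are polar one has $Q_i^{(V_i)}=Q_i$ up to the identification $L^2(V_i,m_i)=L^2(X_i,m_i)$, so the intertwining and the decomposition theorem remain available for the restricted forms.
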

\begin{proof}
It suffices to show that $\Phi$ maps intrinsic metrics to intrinsic
metrics. Let $d_1$ be an intrinsic metric for $Q^{(V_1)}_1$ and let
$m_1^{(b)}, m_1^{(c)}$ be Radon measures on $X_1$ such that
$m_1^{(b)}+m_1^{(c)}\leq m_1$ and $\Gamma_1^{(b)}((d_1)_A\wedge
M)\leq m_1^{(b)}$, $\Gamma_1^{(c)}((d_1)_A\wedge M)\leq m_1^{(c)}$
for all $A\subset U_1$ and $M>0$.

Let $d_2=\Phi(d_1)$. Since $Q_1$, $Q_2$ are recurrent, there is an $\alpha>0$ with $h=\alpha$ a.e. It follows easily from Theorem \ref{transformation_decomposition} that $(d_2)_{\tilde\tau^{-1}(A)}\wedge M=((d_1)_A\wedge M)\circ\tilde \tau\in D(Q_2)_\loc^\ast$ for all $A\subset U_1$ and $M>0$. Moreover, the fact that $\tilde\tau\colon U_2\lra U_1$ is a homeomorphism guarantees $(d_2)_A\wedge T\in C(X_2)$.

Let $m_2^{(b)}=\alpha^2 m_1^{(b)}$, $m_2^{(c)}=m_1^{(c)}$. An application of Theorem \ref{transformation_decomposition} gives $m_2^{(b)}+m_2^{(c)}\leq m_2$ and $\Gamma_2^{(b)}((d_1)_{\tilde\tau^{-1}(A)}\wedge M)\leq m_2^{(b)}$, $\Gamma_2^{(c)}((d_1)_{\tilde\tau^{-1}(A)}\wedge M)\leq m_2^{(c)}$ for all $A\subset V_1$ and $M>0$. Hence $d_2$ is an intrinsic metric.
\end{proof}

\begin{remark}
When points have positive capacity, the sets $V_1$, $V_2$ coincide
with $X_1$, $X_2$. Consequently, in this case  $\tilde\tau$ induces
a bijection between the sets of intrinsic metrics for the original
Dirichlet forms. This result is new even for the case of  graphs
treated in \cite{KLSW15}.
\end{remark}

\section{Resistance forms}\label{sec-Resistance}
In this section we study Dirichlet forms induced by resistance
forms. The theory of such forms goes back to Kigami  and plays  a
major role in analysis and probability theory on fractals
 \cite{Kig01,Kig12}.
Resistance forms naturally come with a metric viz. the resistance
metric and this metric is a fundamental tool in their study.  As
main result of this section we show that the transformation
associated with an intertwining order isomorphism is an isometry (up
to an overall factor) with respect to the resistance metrics
(Theorem \ref{isometry_resistance}).

\medskip

A \emph{resistance form} on a set $X$ is a pair $(\E,\F)$ consisting
of a subspace $\F\subset \IR^X$ and a positive quadratic form $\E$
on $\F$ such that the following conditions hold:
\begin{itemize}
\item The constant functions are contained in $\F$ and $\E(f)=0$ if and only if $f$ is constant.
\item The quotient $\F/\IR1$ is a Hilbert space with norm $\E^{1/2}$.
\item If $V\subset X$ is finite and $g\colon V\lra\IR$, there is a function $f\in \F$ such that $f|_V=g$.
\item For all $x,y\in X$, the distance
\begin{align*}
R(x,y)=\sup\{\abs{f(x)-f(y)}^2\mid f\in\F,\,\E(f)\leq 1\}
\end{align*}
is finite.
\item If $f\in\F$, then $\bar f:=(f\vee 0)\wedge 1\in\F$ and $\E(\bar f)\leq \E(f)$.
\end{itemize}

It is easily seen that $R$ is a metric on $X$, the so-called
resistance metric associated with $(\E,\F)$. We will endow $X$ with
the topology induced by $R$.

The resistance form $(\E,\F)$ is called regular if $X$ is locally
compact and $\F\cap C_c(X)$ is dense in $C_0(X)$. By \cite{Kig12},
Theorem 6.3, this condition is equivalent to the following:

If $K\subset X$ is compact and $U\subset X$ open such that $K\subset
U$ and $\bar U$ is compact, then there is a function $\phi\in\F$
such that $\phi|_K=1$, $\operatorname{im}\phi\subset[0,1]$ and
$\operatorname{supp}\phi\subset \bar U$.

Given a Radon measure $m$ of full support on $(X,R)$, the
restriction of $\E$ to $\F\cap L^2(X,m)$ is closed in $L^2(X,m)$
(cf. \cite{Kig01}, 2.4.1).

We call $Q$ a Dirichlet form associated with $(\E,\F)$ if $C_c(X)\cap\F\subset D(Q)\subset \F\cap L^2(X,m)$ and $Q$ is a restriction of $\E$. If $(\E,\F)$ is regular, the closure of the restriction of $\F$ to $C_c(X)\cap\F$ is a regular Dirichlet form.

\begin{lemma}\label{resistance_recurrent}
Let $(\E,\F)$ be a resistance form on $X$ and $m$ a finite Radon
measure on $X$. Then $\E$ restricted to $\F\cap L^2(X,m)$ is an
irreducible, recurrent Dirichlet form.
\end{lemma}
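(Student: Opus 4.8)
The plan is to verify the three defining properties separately, using throughout that finiteness of $m$ forces the constant function $1$ into the domain. Write $Q$ for the restriction of $\E$ to $D(Q):=\F\cap L^2(X,m)$, which is closed by the cited result. First I would record that $Q$ is a genuine Dirichlet form. The Markovian property is immediate from the last axiom of a resistance form: for $f\in D(Q)$ the unit contraction $\bar f=(f\vee 0)\wedge 1$ again lies in $\F$, is bounded and hence (as $m$ is finite) in $L^2(X,m)$, so $\bar f\in D(Q)$ with $Q(\bar f)=\E(\bar f)\le\E(f)=Q(f)$. It is worth noting here that $Q$ is well defined on $L^2$-classes: two functions in $\F$ agreeing $m$-a.e.\ agree everywhere, since functions in $\F$ are $R$-continuous (from $\abs{f(x)-f(y)}^2\le R(x,y)\E(f)$) and $m$ has full support. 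For density of $D(Q)$ in $L^2(X,m)$ I would use that $\F\subset C(X)$ separates points (by the very definition of $R$), contains the constants, and is closed under the lattice operations (standard for resistance forms); a compact-exhaustion argument together with the regularity of the finite Borel measure $m$ then yields $L^2$-density.

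Recurrence is then the shortest step. Since $m$ is finite we have $1\in D(Q)$, and the first resistance-form axiom gives $\E(1)=0$, hence $Q(1)=0$. As $1\in D(Q)\subset D(Q)_e$, the criterion recalled before the lemma (\cite{FOT94}, Theorem 1.6.3: recurrence is equivalent to $1\in D(Q)_e$ and $Q(1)=0$) shows immediately that $Q$ is recurrent.

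For irreducibility I would verify the invariant-set criterion (\cite{FOT94}, Theorem 1.6.1) quoted above. Suppose $A\subset X$ is measurable with $\1_A f\in D(Q)$ and $Q(f)=Q(\1_A f)+Q(\1_{A^c}f)$ for all $f\in D(Q)$. Applying this to $f=1$ gives $\1_A\in D(Q)$ and $0=Q(1)=Q(\1_A)+Q(\1_{A^c})$; as $Q\ge 0$, both summands vanish, so $Q(\1_A)=0$. Letting $g\in\F$ be the representative of $\1_A$ we get $\E(g)=0$, so $g$ is constant by the first axiom, and therefore $\1_A$ equals $m$-a.e.\ a constant $c$. Since $\1_A$ takes only the values $0$ and $1$, necessarily $c\in\{0,1\}$, i.e.\ $m(A)=0$ or $m(A^c)=0$, which is the asserted triviality of invariant sets.

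The genuinely delicate point is not recurrence or irreducibility, which follow at once from the criteria recalled in the excerpt once the constant $1$ is known to lie in $D(Q)$, but rather the verification that $Q$ is a Dirichlet form at all: the density of $\F\cap L^2(X,m)$ in $L^2(X,m)$ and the attendant passage between $m$-a.e.\ statements in $L^2$ and everywhere statements for the continuous representatives in $\F$. Both are controlled by the $R$-continuity of functions in $\F$ together with the full support of $m$, and the finiteness of $m$ enters precisely to place $1$ into $D(Q)$.
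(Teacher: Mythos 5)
Your proposal is correct and follows essentially the same route as the paper: recurrence from $1\in D(Q)\subset D(Q)_e$ with $Q(1)=\E(1)=0$ via the criterion of \cite{FOT94}, Theorem 1.6.3, and irreducibility by applying the invariant-set criterion with $f=1$ to get $Q(\1_A)=0$ and then constancy of $\1_A$ from the resistance-form axiom. The only difference is that you additionally verify that $Q$ is a Dirichlet form (Markov property, well-definedness, $L^2$-density), which the paper delegates to the remarks preceding the lemma and the citation of \cite{Kig01}, 2.4.1.
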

\begin{proof}
Denote by $Q$ the restriction of $\E$ to $\F\cap L^2(X,m)$. We have $1\in \F\cap L^2(X,m)=D(Q)\subset D(Q)_e$ and $Q(1)=\E(1)=0$. Thus, $Q$ is recurrent.

If $A\subset X$ is a $Q$-invariant set, then $\1_A\in D(Q)\subset \F$ and $Q(\1_A)=Q(1)-Q(\1_{A^c})\leq 0$, hence $\1_A=0$ or $\1_A=1$. Thus, $Q$ is irreducible.
\end{proof}

\begin{lemma}
Let $(\E,\F)$ be a regular resistance form. Then the resistance metric is given by
\begin{align*}
R(x,y)=\sup\{\abs{f(x)-f(y)}^2\colon f\in \F\cap C_b(X),\,\E(f)\leq 1\},\;x,y\in X.
\end{align*}
\end{lemma}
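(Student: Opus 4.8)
The plan is to prove the two inequalities separately, the only nontrivial content being a truncation argument. Since $\F\cap C_b(X)\subset\F$, restricting the supremum to bounded continuous functions can only decrease it, so
\[
\sup\{\abs{f(x)-f(y)}^2\colon f\in\F\cap C_b(X),\,\E(f)\leq 1\}\leq R(x,y)
\]
is immediate from the defining formula for $R$. The work lies entirely in the reverse inequality.

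First I would record that every $f\in\F$ is automatically continuous for the resistance topology, so that the only obstruction to membership in $\F\cap C_b(X)$ is boundedness. Indeed, applying the definition of $R$ to $f/\sqrt{\E(f)}$ (or noting that $f$ is constant when $\E(f)=0$) yields $\abs{f(u)-f(v)}^2\leq R(u,v)\,\E(f)$ for all $u,v\in X$, so $f$ is $\tfrac12$-Hölder with respect to $R$ and hence continuous.

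The key step is then to show that truncation at the two relevant levels neither leaves $\F$ nor increases the energy. Fix $x,y\in X$ and $f\in\F$ with $\E(f)\leq 1$, and set $a=f(x)\wedge f(y)$, $b=f(x)\vee f(y)$; define the truncation $g=(f\vee a)\wedge b$. If $a=b$ then $g$ is constant and lies in $\F$ with zero energy, so assume $a<b$. Writing $\phi=(f-a)/(b-a)$, the vector space structure of $\F$ (which contains the constants) gives $\phi\in\F$ with $\E(\phi)=(b-a)^{-2}\E(f)$; the unit contraction property in the definition of a resistance form yields $(\phi\vee 0)\wedge 1\in\F$ with $\E((\phi\vee 0)\wedge 1)\leq\E(\phi)$; and undoing the affine change $g=a+(b-a)\bigl((\phi\vee 0)\wedge 1\bigr)$ shows $g\in\F$ with $\E(g)\leq\E(f)\leq 1$. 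By construction $g$ takes values in $[a,b]$, so it is bounded, and it is continuous by the first step, whence $g\in\F\cap C_b(X)$. Finally, since $f(x),f(y)\in[a,b]$ the truncation fixes the values at $x$ and $y$, so $\abs{g(x)-g(y)}=\abs{f(x)-f(y)}$. Taking the supremum over all such $f$ then gives the reverse inequality, and the two bounds together prove the claim.

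I would expect no serious obstacle here. The only points requiring a little care are the reduction of the general two-sided truncation $(f\vee a)\wedge b$ to the single unit contraction supplied by the axioms of a resistance form (handled by the affine rescaling above) and the continuity of elements of $\F$, both of which are routine once the observations above are in place.
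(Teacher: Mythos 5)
Your proof is correct and follows essentially the same route as the paper's: there, too, one takes a near-optimal competitor $g$ and truncates it at the two values $g(x)$ and $g(y)$, noting that this preserves the increment, bounds the function, and does not increase the energy. The only difference is that you spell out details the paper leaves implicit, namely the reduction of the two-sided truncation $(f\vee a)\wedge b$ to the unit contraction via affine rescaling and the $R$-H\"older continuity of elements of $\F$.
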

\begin{proof}
Let $x,y\in X$. For $\epsilon>0$ choose $g\in\F$ such that $\E(g)\leq 1$ and $\abs{g(x)-g(y)}^2\geq R(x,y)-\epsilon$. Assume without loss of generality that $g(x)\geq g(y)$. Let $f=(g\wedge g(x))\vee g(y)$. Then $f\in \F\cap C_b(X)$, $\E(f)\leq\E(g)\leq 1$ and
\begin{align*}
\abs{f(x)-f(y)}^2=\abs{g(x)-g(y)}^2\geq R(x,y)-\epsilon.
\end{align*}
Since $\epsilon$ was arbitrary, the assertion follows.
\end{proof}

Here comes the main result of this section. It essentially shows
that intertwining order isomorphisms between resistance forms are
induced by  isometries w.r.t. resistance metric  between the spaces.

\begin{theorem}[$U$ is induced from an isometry] \label{isometry_resistance}
Let $(\E_i,\F_i)$ be regular resistance forms on $X_i$,
$i\in\{1,2\}$, $m_i$ finite Borel measures of full support on
$(X_i,R_i)$ and $Q_i$ irreducible, recurrent Dirichlet forms
associated with $(\E_i,\F_i)$. If $U\colon L^2(X_1,m_1)\lra
L^2(X_2,m_2)$ is an order isomorphism intertwining $Q_1$ and $Q_2$,
then there is a version $\tilde\tau$ of the associated
transformation $\tau$, which is a homeomorphism,  and a constant
$\alpha>0$ such that $h=\alpha$ a.e. and
\begin{align*}
\norm{U}^2 R_1(\tilde \tau(y), \tilde \tau(z))=\alpha^2 R_2(y,z)
\end{align*}
for all $y,z\in X_2$.
\end{theorem}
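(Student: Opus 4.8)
The plan is to exploit three facts: recurrence pins down the scaling, the resistance estimate makes the whole potential theory genuinely pointwise, and the intertwining rescales the energy by a constant factor. \emph{Step 1 (scaling and energy).} Since $Q_1,Q_2$ are irreducible and recurrent, Corollary~\ref{recurrent_h_const} furnishes $\alpha>0$ with $h=\alpha$ a.e., so $Uf=\alpha\,(f\circ\tau)$; together with Corollary~\ref{transformation_measures} this gives $\tau_\#(\alpha^2 m_2)=\norm{U}^2 m_1$. Because each $Q_i$ is a restriction of $\E_i$, Corollary~\ref{corollary:intertwined forms} becomes the energy identity $\alpha^2\,\E_2(f\circ\tau)=\norm{U}^2\,\E_1(f)$ for all $f\in D(Q_1)$, and symmetrically for $U^{-1}$.

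\emph{Step 2 (points are non-polar, hence a genuine homeomorphism).} The decisive feature of the resistance setting is the estimate $\abs{f(x)-f(y)}^2\le R_i(x,y)\,\E_i(f)$ for $f\in\F_i$. It forces every finite-energy function to have a continuous representative and, crucially, it makes single points non-polar: if $Q_i(f)$ is small while $f$ is bounded below at a point $x$, the estimate keeps $f$ bounded below on an entire $R_i$-ball about $x$, whose positive $m_i$-measure (full support) forces $\norm{f}_{Q_i}^2$ to stay bounded below; hence $\Capty_i(\{x\})>0$, so the only polar set is empty and quasi-everywhere coincides with everywhere. I would then run the character construction of Proposition~\ref{tau_strongly_local} with the algebra $\mathcal{L}_1=C_c(X_1)\cap\F_1\subset D(Q_1)$, which is dense in $C_0(X_1)$ by regularity: from $\abs{Uf}\le\alpha\norm{f}_\infty$ and $(Uf)(Ug)=\alpha\,U(fg)$ each $y\in X_2$ defines a character on $C_0(X_1)$, hence a point $\tilde\tau(y)$ with $\widetilde{Uf}=\alpha\,(f\circ\tilde\tau)$. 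The set where all these functions vanish is polar by the argument of Lemma~\ref{h=0_polar}, hence empty; applying the same reasoning to $U^{-1}$ yields a homeomorphism $\tilde\tau\colon X_2\to X_1$ with $f\circ\tilde\tau=\widetilde{f\circ\tau}$ for every $f\in C(X_1)$.

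\emph{Step 3 (isometry up to the overall factor).} With $\tilde\tau$ a homeomorphism, the map $f\mapsto f\circ\tilde\tau$ is a bijection carrying the form domain over $X_1$ to the form domain over $X_2$, with $(f\circ\tilde\tau)(y)=f(\tilde\tau(y))$ and, by Step~1, $\E_2(f\circ\tilde\tau)=(\norm{U}^2/\alpha^2)\,\E_1(f)$. Using the preceding lemma to restrict the suprema defining $R_1,R_2$ to bounded functions, and the density of $C_c(X_i)\cap\F_i$ to keep the admissible functions inside $D(Q_i)$, I substitute $g=f\circ\tilde\tau$ into $R_2(y,z)=\sup\{\abs{g(y)-g(z)}^2:g\in\F_2\cap C_b(X_2),\ \E_2(g)\le 1\}$. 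A rescaling of the admissible functions then converts the energy identity into the asserted proportionality between $R_1(\tilde\tau(y),\tilde\tau(z))$ and $R_2(y,z)$, namely $\norm{U}^2 R_1(\tilde\tau(y),\tilde\tau(z))=\alpha^2 R_2(y,z)$, with the overall constant read off directly from the factor $\norm{U}^2/\alpha^2$ in the energy identity.

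\emph{Main obstacle.} The heart of the argument is the passage from the almost-everywhere world, in which $U$, $h$ and $\tau$ live, to the genuinely pointwise world of the resistance metric. The non-polarity of points is exactly what removes all exceptional sets and upgrades the quasi-homeomorphism of Theorem~\ref{tau_quasi-homeo} to a homeomorphism defined on all of $X_i$; establishing this, and reconciling the form $\E_i$ that defines $R_i$ (acting on all of $\F_i$) with the possibly smaller domain $D(Q_i)$ on which $U$ operates — handled by truncation together with the density of $C_c(X_i)\cap\F_i$ — are the two delicate points.
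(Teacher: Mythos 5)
Your proposal follows the same skeleton as the paper's proof: recurrence and Corollary~\ref{recurrent_h_const} give $h=\alpha$ a.e., Corollary~\ref{corollary:intertwined forms} gives the energy identity $\alpha^2\,\E_2(f\circ\tau)=\norm{U}^2\,\E_1(f)$, and the resistance metrics are then compared through the supremum over bounded continuous finite-energy functions, so that the constant $\alpha^2/\norm{U}^2$ drops out exactly as in your Step~3. The difference is in Step~2: the paper obtains the everywhere-defined homeomorphism very cheaply by citing \cite{Kig12}, Proposition~9.13 (quasi-continuous functions for these forms are automatically continuous), which upgrades the quasi-homeomorphism of Theorem~\ref{tau_quasi-homeo} to a genuine homeomorphism; you instead re-derive this input by arguing that points are non-polar and re-running the Gelfand character construction. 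That is the same mechanism the paper uses in Theorem~\ref{tau_quasi-homeo}, just made self-contained, and it is fine in spirit.

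Two steps need repair, one of them genuinely. First, your non-polarity sketch is incomplete: the capacity $\Capty_\psi(\{x\})$ is defined relative to the reference function $\psi=(L+1)^{-1}\phi$, and your argument needs $\tilde\psi(x)>0$ \emph{at the given point} $x$; a.e.\ positivity of $\psi$ together with continuity of its version only yields $\tilde\psi\geq 0$ with strict positivity on a dense set, so ``$f$ bounded below at $x$'' is not free. This is precisely the content of Kigami's positivity-of-point-capacity result, which you should either cite or prove (e.g.\ via strict positivity of continuous $1$-excessive functions for irreducible forms). Second, and more seriously, your Step~3 bridge fails as stated: the supremum defining $R_i$ runs over $\F_i\cap C_b(X_i)$, while the energy identity is only available on $D(Q_i)$, and uniform density of $C_c(X_i)\cap\F_i$ in $C_0(X_i)$ gives no control whatsoever on $\E_i$, so sup-norm approximation cannot keep approximants inside the constraint set $\{\E_i\leq 1\}$. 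The paper closes this gap differently: since the $m_i$ are finite (and the forms recurrent), one has the identification $\F_i\cap C_b(X_i)=D(Q_i)\cap C_b(X_i)$, after which $U(D(Q_1)\cap C_b(X_1))=D(Q_2)\cap C_b(X_2)$ and the two constraint sets correspond under $f\mapsto f\circ\tilde\tau$ up to the rescaling $\alpha^2 Q_2(f\circ\tau)\leq\norm{U}^2$. With that identification in place of your density device, your computation of the proportionality constant is exactly the paper's.
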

\begin{proof}

Corollary \ref{recurrent_h_const} implies that there is a constant
$\alpha>0$ such that $h=\alpha$ a.e.

Since $m_i$ are finite, we have $\F_i\cap C_b(X_i)=D(Q_i)\cap
C_b(X_i)$. Moreover, by \cite{Kig12}, Proposition 9.13,
$Q_i$-quasi-continuous functions are continuous. Hence $\tau$ has a
version $\tilde\tau$ that is a homeomorphism. Furthermore,
$U(D(Q_1)\cap C_b(X_1))=D(Q_2)\cap C_b(X_2)$.

If $f\in D(Q_1)$, it follows that
\begin{align*}
Q_2(\alpha f\circ\tau)=Q_2(Uf)=\norm{U}^2 Q_1(f).
\end{align*}
Thus, $Q_1(f)\leq 1$ if and only if $\alpha ^2Q_2(f\circ\tau)\leq
\norm{U}^2$. Therefore,
\begin{align*}
R_1(\tilde\tau(y),\tilde\tau(z))&=\sup\{\abs{f(\tilde\tau(y))-f(\tilde\tau(z))}^2\mid f\in D(Q_1)\cap C_b(X_1),\,Q_1(f)\leq 1\}\\
&=\sup\{\abs{g(y)-g(z)}^2\mid g\in D(Q_2)\cap C_b(X_2),\,\alpha^2 Q_2( g)\leq \norm{U}^2\}\\
&=\frac{\alpha^2}{\norm{U}^2}R_2(y,z).\qedhere
\end{align*}
\end{proof}
  As a consequence of the previous
considerations we can treat typical examples of resistance forms as
arising in the study of fractals. This is the content of the next
corollary.

\begin{corollary}[Typical example]
Let $(\E_i,\F_i)$ be resistance forms on $X_i$, $i\in\{1,2\}$, such
that $(X_i,R_i)$ are compact, $m_i$ probability measures on $X_i$
and $Q_i$ Dirichlet forms associated with $\E_i$. If $U\colon
L^2(X_1,m_1)\lra L^2(X_2,m_2)$ is an order isomorphism intertwining
$Q_1$ and $Q_2$, then $\tilde\tau$ is an surjective  isometry with
respect to the resistance metrics $R_1$, $R_2$.
\end{corollary}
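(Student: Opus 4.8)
The plan is to deduce the corollary from Theorem~\ref{isometry_resistance} by checking that its hypotheses are met and then removing the overall constant. The only gaps between the two statements are that here the resistance forms are not assumed regular (but the spaces $(X_i,R_i)$ are compact), and that the measures are merely probability measures, which are finite (and, as is implicit in this setting, of full support).

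First I would verify that a resistance form on a compact space is automatically regular and that the associated Dirichlet form is uniquely pinned down. Indeed, every $f\in\F_i$ satisfies $\abs{f(x)-f(y)}^2\leq \E_i(f)\,R_i(x,y)$, so $\F_i\subset C(X_i)$; since $X_i$ is compact and $m_i$ finite, this gives $\F_i\cap L^2(X_i,m_i)=\F_i=\F_i\cap C_c(X_i)$. The defining inclusions $C_c(X_i)\cap\F_i\subset D(Q_i)\subset \F_i\cap L^2(X_i,m_i)$ therefore force $D(Q_i)=\F_i$ and $Q_i=\E_i$. Next, $\F_i$ is a subspace of $C(X_i)$ containing the constants, separating points (as $R_i(x,y)>0$ for $x\neq y$) and closed under normal contractions, hence a sublattice; the lattice form of the Stone--Weierstrass theorem then shows that $\F_i=\F_i\cap C_c(X_i)$ is uniformly dense in $C_0(X_i)=C(X_i)$. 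Together with local compactness of the compact space $X_i$ this is exactly regularity. Finally, Lemma~\ref{resistance_recurrent} (applicable since $m_i$ is finite) shows the forms $Q_i$ are irreducible and recurrent, so all hypotheses of Theorem~\ref{isometry_resistance} are in force.

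Applying Theorem~\ref{isometry_resistance} yields a homeomorphic version $\tilde\tau\colon X_2\lra X_1$, a constant $\alpha>0$ with $h=\alpha$ a.e., and the scaling relation
\begin{align*}
\norm{U}^2 R_1(\tilde\tau(y),\tilde\tau(z))=\alpha^2 R_2(y,z)\qquad\text{for all }y,z\in X_2.
\end{align*}
It remains to show $\alpha=\norm{U}$, which is where the normalization to probability measures enters. By Corollary~\ref{transformation_measures} we have $\tau_\#(h^2 m_2)=\norm{U}^2 m_1$, and with $h=\alpha$ this reads $\alpha^2\,\tau_\# m_2=\norm{U}^2 m_1$. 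Evaluating both sides on $X_1$ and using $(\tau_\# m_2)(X_1)=m_2(X_2)=1=m_1(X_1)$ gives $\alpha^2=\norm{U}^2$. Substituting back cancels the constant and produces $R_1(\tilde\tau(y),\tilde\tau(z))=R_2(y,z)$ for all $y,z\in X_2$; surjectivity is immediate, since $\tilde\tau$ maps $X_2$ homeomorphically onto $X_1$.

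I expect the main obstacle to be the regularity step: one must be careful that all functions of $\F_i$ are genuinely continuous and $L^2$-integrable, so that the associated Dirichlet form is pinned down as $\E_i$ on $\F_i$, and that $\F_i$ is a point-separating sublattice of $C(X_i)$ so that the lattice Stone--Weierstrass theorem applies. Once regularity is secured, the passage through Theorem~\ref{isometry_resistance} and the one-line mass computation fixing $\alpha=\norm{U}$ are routine.
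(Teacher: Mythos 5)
Your proposal is correct and follows essentially the same route as the paper: invoke Lemma~\ref{resistance_recurrent} to get irreducibility and recurrence, apply Theorem~\ref{isometry_resistance}, and then pin down $\alpha=\norm{U}$ by evaluating $\alpha^2\,\tau_\#m_2=\norm{U}^2 m_1$ on the whole space using $m_1(X_1)=m_2(X_2)=1$. The only difference is that you explicitly verify what the paper leaves implicit --- that on a compact space $\F_i\subset C(X_i)$, the inclusions force $D(Q_i)=\F_i$, and Stone--Weierstrass yields regularity --- which is a welcome clarification rather than a new argument.
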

\begin{proof}
By Lemma \ref{resistance_recurrent}, the Dirichlet forms $Q_1$ and
$Q_2$ are irreducible and recurrent. It suffices to show that
$\alpha=\norm{U}$ in Theorem \ref{isometry_resistance}. By Corollary
\ref{transformation_measures} we have $\alpha^2 \tau_\#
m_2=\norm{U}^2 m_1$. Since $m_1(X_1)=1 = m_2(X_2)$, the assertion
follows.
\end{proof}

\begin{remark} Inspection of the proof of the corollary shows that
the compactness assumption on the $X_i$ is not necessary. Also, it
is not necessary that the $m_i$ are probability measures. It
suffices that they are finite measures with $m_1 (X_1) = m_2 (X_2)$.
\end{remark}

\begin{remark} The set of resistance forms is not disjoint from  the
sets of strictly local forms. So, for strictly local resistance
forms one could also try and use the intrinsic metrics discussed
above. However, in typical situations these intrinsic metrics will
be zero and, hence, do  not capture any geometry of the underlying
set \cite{Hin05}.
\end{remark}


\end{document}